\documentclass[12pt,reqno,a4paper]{amsart}
\usepackage{euscript}
\usepackage{amssymb}
\usepackage[dvipsnames]{xcolor}
\usepackage{color}
\topmargin -1cm \textheight=22.5cm
\textwidth=14.8cm

\theoremstyle{plain}
\numberwithin{equation}{section}
\newtheorem{theorem}{Theorem}
\newtheorem{proposition}{Proposition}
\newtheorem{lemma}{Lemma}
\newtheorem{corollary}{Corollary}
\newtheorem{definition}{Definition}
\theoremstyle{remark}
\newtheorem{remark}{Remark}
\newtheorem{example}{Example}
\renewcommand{\epsilon}{\varepsilon}
\renewcommand{\phi}{\varphi}

\DeclareMathOperator{\Ima}{Im}
\DeclareMathOperator{\Ker}{Ker}

\def\N{\mathbb{N}}
\def\cA{\EuScript{A}}
\def\R{\mathbb{R}}

\def\Id{\text{\rm Id}}
\def\Z{\mathbb{Z}}

\title [Smooth linearization of nonautonomous dynamics ]{Smooth linearization of nonautonomous dynamics under polynomial behaviour
}

\author{Lucas Backes}
\address[Lucas  Backes]
{Department of Mathematics\\
    Universidade Federal do Rio Grande do Sul\\
    Av. Bento Goncalves 9500, CEP 91509-900, Porto Alegre, RS, Brazil}
\email[L. Backes]{lucas.backes@ufrgs.br}

\author{ Davor Dragi\v cevi\'c}
\address[Davor Dragi\v cevi\'c]
{Faculty of Mathematics\\
    University of Rijeka\\
    Rijeka 51000, Croatia}
\email[D.~Dragi\v cevi\'c]{ddragicevic@math.uniri.hr}

\author{ Wenmeng Zhang}
\address[Wenmeng Zhang]
{School of Mathematical Sciences\\
    Chongqing Normal University\\
    Chongqing 401331, P.R.China}
\email[W.~M.~Zhang]{wmzhang@cqnu.edu.cn}

\date{}

\begin{document}
\maketitle

\begin{abstract}
The main purpose of this paper is to formulate new conditions for smooth linearization of nonautonomous systems with discrete and continuous time. Our results assume that the linear part admits a nonuniform polynomial dichotomy and that the associated polynomial dichotomy spectrum exhibits appropriate spectral gap and spectral band conditions. This is in sharp contrast to  most of the previous results in the literature which  assumed that the linear part admits an exponential dichotomy. Our techniques exploit the relationship between polynomial and exponential dichotomies via a suitable reparametrization of time.

\vskip 0.2cm

{\bf Keywords}: Linearization, dichotomy spectrum, nonuniform polynomial dichotomy, nonuniform exponential dichotomy

\vskip 0.2cm
{\bf AMS (2020) subject classification:} 37C15, 37D25
\end{abstract}

\baselineskip 15pt   
\parskip 10pt         

\thispagestyle{empty}
\setcounter{page}{1}
\allowdisplaybreaks[4]


\section{Introduction}

One of the most important notions in the theory of dynamical systems is that of (uniform) hyperbolicity. It plays a major part in many situations like in the study of stability, chaos and bifurcations. The counterpart of this notion  in the case of nonautonomous dynamical systems is given by the notion of \emph{exponential dichotomy}, whose study dates back to the pioneering work of Perron \cite{Per-30}. In the discrete time case, we say that the nonautonomous dynamics defined by the sequence $(A_n)_{n\in \Z}$ of invertible linear operators on $\R^d$ admits an exponential dichotomy
if there exist constants ${K},\lambda>0$ and a sequence of projections $(P_n)_{n\in \Z}$ on $\R^d$ such that for every $n,m\in \Z$ with $m>n$, $A_nP_n=P_{n+1}A_n$ and 
\begin{align}\label{ED-00}
\| A_{m-1}\cdots A_n P_n\| \le  {K}e^{-\lambda(m-n)},\quad
\| A^{-1}_n\cdots A^{-1}_{m-1} (\Id-P_m)\| \le  {K} e^{-\lambda (m-n)}.
\end{align}
As for hyperbolicity in the case of automomous dynamical systems, the notion of an exponential dichotomy also has wide applications like in the study of invariant manifolds, invariant foliations and normal forms \cite{BV-book,Pot-book,SY-book}, and there exists an extensive literature involving this notion for both finite and infinite dimensional systems (see for instance \cite{CL-JDE95,HZ-JDE04,P-JDE84,ZZ-JFA16}).

Even though the notion of an  exponential dichotomy is an important concept, there are many important classes of systems that do not fit in this framework. For example, Coppel~\cite[p.12]{Cop-book} gave an example 
of a planar nonautonomous system without bounded growth that does not admit an exponential dichotomy even though its restriction to the stable and unstable subspaces are  contractions and expansions, respectively. As observed in \cite[p. 683 and Example 2]{ZLZ-JDE17}, what fails to be satisfied for this example to admit an (uniform) exponential dichotomy is that the amount of time that we need to wait until we see actual contraction (or expansion) depends on the initial time. In terms of condition \eqref{ED-00} this means that the coefficient $K$ may depend on $n$ (or $m$) and that it can go to infinite whenever $n$ (or $m$) vary.
This gives rise to the notion of \emph{nonuniform} exponential dichotomy introduced by Barreira and Valls~\cite{BV-book}, which turns out to be ubiquitous in the context of the ergodic theory. 

On the other hand, due to the flexibility of the nonautonomous setting, it is easy to construct broad classes of systems which admit a splitting into stable and unstable directions, but with non-exponential rates of contraction and expansion. In the present paper we will be concerned with systems which combined both of the previous features: the rates of contraction and expansion along stable and unstable directions are nonuniform and non-exponential. Among many meaningful possibilities, we will consider the notion of a \emph{nonuniform polynomial dichotomy}, which was introduced independently (and in a slightly different manner) by Bento and Silva~\cite{BS1, BS2}
and Barreira and Valls~\cite{BV-NA09} (see also~\cite{BFVZ}). We recall that  a sequence   $(A_n)_{n\in \N}$ of invertible  linear operators on $\mathbb R^d$  admits a \emph{nonuniform polynomial dichotomy} if there exist ${K},\lambda>0$, $\varepsilon\geq 0$ and a sequence of projections $(P_n)_{n\in \N}$ on $\R^d$ such that for every $n,m\in \N$ with $m>n$, $A_nP_n=P_{n+1}A_n$ and
\begin{align*}
\| A_{m-1}\cdots A_n P_n\| \le  {K} (m/n)^{-\lambda}n^\varepsilon,
\quad
\| A^{-1}_n\cdots A^{-1}_{m-1} (\Id-P_m)\| \le  {K} (m/n)^{-\lambda}m^\varepsilon.
\end{align*}
As emphasized in~\cite{BFVZ}, this notion arises quite naturally under non-vanishing of appropriate polynomial Lyapunov exponents.

In what follows, we are interested in studying the problem of smooth linearization of  nonautonomous systems whose linear part admits a  nonuniform polynomial dichotomy. We will consider systems with both discrete and continuous time.  This problem consists of finding a time dependent smooth change of coordinates sending the solutions of the nonlinear system into the solutions of the linear one. 
In our main results, we first formulate smooth linearization results for one-sided systems whose linear part admits a nonuniform exponential dichotomy, which are  deduced from some existing results for two-sided systems \cite{DZZ,DZZ20}. Then, these results are applied to the setting of a nonuniform polynomial dichotomy to obtain smooth linearization results when the associated \emph{polynomial dichotomy spectrum} exhibits appropriate spectral gap and spectral band conditions.  Our techniques rely on the relationship between polynomial and exponential dichotomies via a suitable reparametrization of time. A key concept that is used in our results is given by the polynomial dichotomy spectrum and an important part of the paper is devoted to study the properties of this set. We emphasize that our results are in sharp contrast to  most of the previous results in the literature which assumed that the linear part admits an (uniform/nonuniform) exponential dichotomy. 

The linearization problem described above is one of the most fundamental ones in the theory of dynamical systems and has a long history (of which we do not attempt to give a complete survey). In the context of autonomous dynamics, the problem of formulating sufficient conditions under which the conjugacy exhibits higher regularity properties was first considered in the pioneering works of Sternberg~\cite{Sternberg2, Sternberg58} who proved in the 1950s that 
for each $r\in \mathbb{N}$ there is an integer $k\in \mathbb{N}$ such that $C^k$ hyperbolic diffeomorphisms with non-resonance conditions up to order $k$ admit a  $C^r$ linearization.
We also mention the fundamental works of Hartman \cite{HartPAMS60} and Grobman \cite{Grobman} who proved, independently, in the 1960s that $C^1$ hyperbolic diffeomorphisms can be $C^0$ linearized near the hyperbolic ﬁxed point. It shows that if one is only interested in the case where $r=0$ in the Sternberg's theorem, then one can take $k=1$ and, moreover, no non-resonance condition is needed (see Section \ref{sec: lin exp dich} for more details). We note that  infinite dimensional versions of this result were obtained independently by Palis (\cite{Palis}) and Pugh (\cite{Pugh}).

The first nonautonomous version of the Grobman-Hartman theorem was established by Palmer \cite{Palmer} for the case of continuous time dynamics. A discrete time version of Palmer's result was formulated in \cite{AW}. Both these results were obtained by assuming that the associated linear system admits a uniform exponential dichotomy.
The problem of smooth linearization for  nonautonomous systems was considered only recently. To the best of our knowledge, the ﬁrst results in this direction
were obtained in \cite{CMR,CR}, in the uniformly/nonuniformly exponentially stable case. Moreover, in \cite{CDS} the authors have established a Sternberg-type theorem in the setting of 
a uniform exponential dichotomy with continuous time.
More relevant to our context are the works \cite{DZZ,DZZ20} (see also~\cite{D1}) where $C^1$, differentiable (at 0) and H\"older linearization results were obtained, respectively, in the setting of a nonuniform exponential dichotomy under some spectral gap and spectral band conditions. These results will play a major part in our proofs, as we already emphasized. 

Finally, we mention  recent results \cite{BD, CJ, J} dealing with the smooth linearization of nonautonomous systems in the absence of  any kind of dichotomy, non-resonance or spectral gap assumptions. However, as observed in~\cite{BD},  such results in the setting of~\cite{DZZ} (and consequently also in the setting of the present paper)  can fail to be applicable or could yield weaker results.

This paper is organized as follows: we start our study focusing in discrete time dynamical systems (Sections \ref{sec: pol dich}-\ref{sec: inf dim discrete}). In Section \ref{sec: pol dich} we recall some notions of dichotomy and highlight some relations between them. Moreover, we introduce and study a notion of dichotomy spectrum for systems admitting a nonuniform polynomial dichotomy. In Section \ref{sec: lin exp dich} we present results about $C^1$, differentiable (at 0) and H\"older linearization under the assumption of nonuniform exponential dichotomy on the half line. Section \ref{sec: lin pol} is devoted to establish $C^1$, differentiable (at 0) and H\"older linearization results under the assumption of  nonuniform polynomial dichotomy. At the end of Section \ref{sec: lin pol}, we explain how the results from the previous section can be extended to the infinite dimensional setting. Finally, in Section \ref{sec: continuous time case} we obtain continuous time versions of the results obtained in Section \ref{sec: lin pol}. Appendix A is devoted to strengthen the properties of the conjugacies given in \cite{DZZ}, while in Appendix B we prove Proposition \ref{22}.

\section{Polynomial dichotomy and dichotomy spectrum} \label{sec: pol dich}
In this section we introduce several notions of dichotomies and dichotomy spectra and present some useful properties and relations between these notions. We start by recalling some definitions and introducing some notations. 

Let $\N$ denote the set of all natural numbers, $\Z$ the set of integers, $\Z^+$ the set of nonnegative integers and $\Z^-$ the set of nonpositive integers. Given $J\in \{\Z, \Z^-, \Z^+\}$ and a sequence $(A_n)_{n\in J}$ of invertible operators on $\R^d$, for $m,n\in J$ let us consider
\begin{equation}\label{eq: cocycle}
\cA(m,n):=\begin{cases}
A_{m-1} \ldots A_n, & m>n; \\
\Id, & m=n;\\
A_m^{-1} \ldots A_{n-1}^{-1}, & m<n,
\end{cases}
\end{equation}
where $\Id$ denotes the identity operator on $\R^d$.

\subsection{Nonuniform polynomial and exponential dichotomies}

We  introduce the notion of a nonuniform strong polynomial dichotomy.
\begin{definition}\label{def-SPD}
Let $(A_n)_{n\in \N}$ be a sequence of invertible operators on $\R^d$. We say that $(A_n)_{n\in \N}$ admits a \emph{nonuniform strong polynomial dichotomy} if there exist constants ${K}>0$,  $a\ge \lambda>0$, $\varepsilon\ge 0$ and a sequence of projections $(P_n)_{n\in \N}$ on $\R^d$ such that the following properties hold:
\begin{itemize}
\item for $n\in \N$,
\begin{equation}\label{pro}
A_nP_n=P_{n+1}A_n;
\end{equation}
\item for $m\ge n$, $m,n\in \N$,
\begin{equation}\label{pd1}
\| \cA(m,n)P_n\| \le {K}\bigg (\frac m n\bigg )^{-\lambda}n^\varepsilon,
\quad
\| \cA(n, m)Q_m\| \le {K} \bigg (\frac m n\bigg )^{-\lambda}m^\varepsilon,
\!\!\!\!\!\!\!\!
\end{equation}
where $Q_m:={\rm Id}-P_m$;
\item for $m\ge n$,
\begin{equation}\label{bg}
\| \cA(m,n)\| \le {K}\bigg (\frac m n\bigg )^{a}n^\varepsilon, \qquad \| \cA(n,m)\| \le {K}\bigg (\frac m n\bigg )^{a}m^\varepsilon.
\end{equation}
\end{itemize}
In particular, when $\varepsilon=0$ we say that $(A_n)_{n\in \N}$ admits a \emph{uniform strong polynomial dichotomy}.
\end{definition}

We also recall the notion of  a nonuniform strong exponential dichotomy.
\begin{definition}
Let $(A_n)_{n\in J}$ be a sequence of invertible operators on $\R^d$. We say that $(A_n)_{n\in J}$ admits a \emph{nonuniform strong exponential dichotomy}
if there exist constants ${K}>0$,  $a\ge \lambda>0$, $\varepsilon\ge 0$ and a sequence of projections $(P_n)_{n\in \N}$ on $\R^d$ such that the following properties hold:
\begin{itemize}
\item for $n, n+1\in J$, \eqref{pro} holds;
\item for $m\ge n$, $m,n\in J$,
\begin{align}\label{ED-1}
\| {\mathcal A}(m,n)P_n\| \le  {K}e^{-\lambda(m-n)+\varepsilon|n|},
\quad
\| {\mathcal A}(n, m) Q_m\| \le  {K} e^{-\lambda (m-n)+ \varepsilon|m|},
\!\!\!\!\!\!\!\!
\end{align}
where $Q_m:=\Id-P_m$;
\item for $m\ge n$,
\begin{equation}\label{be}
\|{\mathcal A}(m,n)\| \le {K}e^{a(m-n)+\varepsilon |n|},\qquad\|{\mathcal A}(n,m)\| \le {K}e^{a(m-n)+\varepsilon|m|}.
\end{equation}
\end{itemize}
In particular, when $\varepsilon=0$ we say that $(A_n)_{n\in \N}$ admits a \emph{uniform strong exponential dichotomy}.
\end{definition}

\subsection{Polynomial and exponential dichotomies with respect to a sequence of norms}

\begin{definition}\label{def-SPD-SN}
Let $(\|\cdot\|_n)_{n\in \N}$ be a sequence of norms on $\R^d$.
 We say that a sequence $(A_n)_{n\in \N}$ of invertible operators on $\R^d$ admits a \emph{strong polynomial dichotomy with respect to} $(\|\cdot\|_n)_{n\in \N}$,
 if there exist constants  ${K}>0$,  $a\ge \lambda>0$ and a sequence of projections $(P_n)_{n\in \N}$ on $\R^d$ such that the following properties hold:
\begin{itemize}
\item for $n\in \N$, \eqref{pro} holds;
\item for $m\ge n$, $m,n\in \N$ and $x\in \R^d$,
\begin{equation}\label{pd1sn}
\begin{split}
&\| \cA(m,n)P_nx\|_m \le {K}\bigg (\frac m n\bigg )^{-\lambda}\|x\|_n,
\\
&\| \cA(n, m)Q_mx\|_n \le {K} \bigg (\frac m n\bigg )^{-\lambda}\|x\|_m,
\end{split}
\end{equation}
where $Q_m:={\rm Id}-P_m$;
\item  for $m\ge n$ and $x\in \R^d$,
\begin{equation}\label{bg1sn}
\| \cA(m,n)x\|_m \le {K}\bigg (\frac m n\bigg )^{a}\|x\|_n, \quad \| \cA(n,m)x\|_n \le {K}\bigg (\frac m n\bigg )^{a} \|x\|_m.
\!\!\!
\end{equation}
\end{itemize}
\end{definition}
\begin{remark}
Observe that in the above Definitions~\ref{def-SPD}-\ref{def-SPD-SN}, it is allowed that $P_n=\Id$ or $P_n={\bf 0}$ (the nul-operator), which correspond to the contractive case and the expansive case, respectively.
\end{remark}

The following result gives the relationship between nonuniform strong polynomial dichotomies and polynomial dichotomies with respect to a sequence of norms.
\begin{proposition}\label{LN}
Let $(A_n)_{n\in \N}$ be a sequence of invertible operators on $\R^d$. The following properties are equivalent:
\begin{enumerate}
\item $(A_n)_{n\in \N}$ admits a nonuniform strong polynomial dichotomy;
\item $(A_n)_{n\in \N}$ admits a strong polynomial dichotomy with respect to $(\|\cdot\|_n)_{n\in \N}$ with the property that there exist $C>0$ and $\delta \ge 0$ such that
\begin{equation}\label{ln1}
\|x\| \le \|x\|_n \le Cn^\delta \|x\|, \quad \text{for $x\in \R^d$ and $n\in \N$.}
\end{equation}
\end{enumerate}
\end{proposition}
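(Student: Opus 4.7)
The plan is to prove the two implications separately. The implication (2) $\Rightarrow$ (1) is a direct consequence of the comparison \eqref{ln1}: given the bounds \eqref{pd1sn} and \eqref{bg1sn} in the Lyapunov norms, I apply $\|\cdot\| \le \|\cdot\|_m$ on the left-hand side of each inequality and $\|\cdot\|_n \le Cn^\delta \|\cdot\|$ on the right-hand side. For example,
\begin{equation*}
\|\cA(m,n)P_n x\| \le \|\cA(m,n)P_n x\|_m \le K(m/n)^{-\lambda}\|x\|_n \le KC(m/n)^{-\lambda} n^\delta \|x\|,
\end{equation*}
which is precisely the first estimate in \eqref{pd1} with nonuniformity parameter $\varepsilon = \delta$. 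The remaining three bounds in \eqref{pd1} and \eqref{bg} are derived analogously.

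For the implication (1) $\Rightarrow$ (2), I would construct adapted norms via weighted suprema along the orbits of $\cA$. Fix some $\tilde\lambda \in (0,\lambda)$ and, for each $n\in\N$ and $x\in\R^d$, define
\begin{equation*}
\|P_n x\|_n^s := \sup_{m \ge n}(m/n)^{\tilde\lambda}\|\cA(m,n)P_n x\|,\quad
\|Q_n x\|_n^u := \sup_{1 \le m \le n}(n/m)^{\tilde\lambda}\|\cA(m,n)Q_n x\|,
\end{equation*}
and set $\|x\|_n := \|P_n x\|_n^s + \|Q_n x\|_n^u$. The lower bound $\|x\|\le\|x\|_n$ follows at once by taking $m=n$ in each supremum, while the upper bound $\|x\|_n \le Cn^\delta \|x\|$ is obtained by inserting the estimates \eqref{pd1} into each supremum: since $\tilde\lambda < \lambda$, the factors $(m/n)^{\tilde\lambda - \lambda}$ and $(n/m)^{\tilde\lambda - \lambda}$ stay uniformly bounded on the respective ranges, and the remaining nonuniformity $n^\varepsilon$ (resp.\ $m^\varepsilon \le n^\varepsilon$) combines with $\|P_n\|,\|Q_n\| \le Kn^\varepsilon$ to yield $\delta$ proportional to $\varepsilon$.

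The decay estimates \eqref{pd1sn} follow by a standard re-indexing of the suprema using the cocycle identity $\cA(k,m)\cA(m,n)=\cA(k,n)$: for $k\ge m\ge n$,
\begin{equation*}
(k/m)^{\tilde\lambda}\|\cA(k,n)P_n x\| = (m/n)^{-\tilde\lambda}\cdot(k/n)^{\tilde\lambda}\|\cA(k,n)P_n x\| \le (m/n)^{-\tilde\lambda}\|P_n x\|_n^s,
\end{equation*}
giving $\|\cA(m,n)P_n x\|_m^s\le(m/n)^{-\tilde\lambda}\|x\|_n$, and the backward unstable estimate is symmetric. For the growth bound \eqref{bg1sn}, the stable part is immediate from the preceding inequality, while for $x\in\Ima Q_n$ one splits the supremum defining $\|\cA(m,n)Q_n x\|_m^u$ according to whether $k\le n$ or $n \le k \le m$: the first range is handled by the same re-indexing (producing the factor $(m/n)^{\tilde\lambda}$), and the second range is controlled by $(m/k)^{\tilde\lambda}(k/n)^a$, which telescopes to $(m/n)^a$ whenever $\tilde\lambda \le a$.

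The main obstacle is precisely this last step: the definition of $\|\cdot\|_n^u$ captures only backward propagation on the unstable subspace, so the forward growth has to be controlled by combining the bound \eqref{bg} with careful bookkeeping to ensure that the nonuniform factors $n^\varepsilon$ do not reappear in the new norm but are instead absorbed into the equivalence constant $Cn^\delta$ of \eqref{ln1}. Once this balance is verified, the three bullet points of Definition~\ref{def-SPD-SN} are all in place with new constants $\tilde K, \tilde\lambda, a$ depending on the original dichotomy data.
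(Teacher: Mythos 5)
The implication $(2)\Rightarrow(1)$ is handled correctly and in the same way as the paper.

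For $(1)\Rightarrow(2)$, however, your adapted norm is too small. You define $\|\cdot\|_n^s$ by a single forward supremum over $m\ge n$ and $\|\cdot\|_n^u$ by a single backward supremum over $m\le n$. The paper instead uses, for each part, \emph{two} suprema: for $\|x\|_n^s$ one tracks $\sup_{m\ge n}(m/n)^{\lambda}\|\cA(m,n)P_n x\|$ \emph{and} $\sup_{m<n}(n/m)^{-a}\|\cA(m,n)P_n x\|$, and symmetrically for $\|x\|_n^u$. The second supremum in each pair is not decoration; it is exactly what makes the growth estimate~\eqref{bg1sn} uniform.

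Here is where your argument breaks. To bound $\|\cA(m,n)Q_n x\|_m^u$ for $m\ge n$ you split the supremum over $k\le m$ into $k\le n$ and $n<k\le m$. For $n<k\le m$ you write ``$(m/k)^{\tilde\lambda}(k/n)^a$ telescopes to $(m/n)^a$,'' but to produce the factor $(k/n)^a$ times a quantity controlled by $\|x\|_n$ you must bound $\|\cA(k,n)Q_n x\|$ \emph{from above}. Your norm $\|\cdot\|_n^u$ only encodes \emph{backward} orbits of $Q_n x$, so the only tool available is the bound~\eqref{bg} in the original Euclidean norm, which reads $\|\cA(k,n)Q_n x\|\le K(k/n)^a n^\varepsilon\|Q_n x\|$. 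The factor $n^\varepsilon$ then lands in the coefficient of~\eqref{bg1sn}, not in the norm-equivalence constant~\eqref{ln1}, and cannot be ``absorbed'' there: the two inequalities play different roles. The same defect appears on the stable side for the backward estimate $\|\cA(n,m)P_m x\|_n\le K'(m/n)^a\|x\|_m$ ($m\ge n$), contrary to your claim that ``the stable part is immediate'': your $\|\cdot\|_n^s$ stores no information about the backward orbit of $P_n x$.

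The fix is to enlarge the norm as the paper does, i.e.\ add $\sup_{m<n}(n/m)^{-a}\|\cA(m,n)P_n x\|$ to $\|\cdot\|_n^s$ and $\sup_{m\ge n}(m/n)^{-a}\|\cA(m,n)Q_n x\|$ to $\|\cdot\|_n^u$. These extra suprema are finite and contribute only a factor $\le Kn^\varepsilon\cdot Kn^\varepsilon$ to the upper bound in~\eqref{ln1} (whence $\delta=2\varepsilon$), while supplying exactly the uniform control $\|\cA(k,n)Q_n x\|\le (k/n)^a\|x\|_n^u$ (resp.\ $\|\cA(k,n)P_n x\|\le (n/k)^a\|x\|_n^s$ for $k<n$) needed in the splitting argument. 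With that modification, the re-indexing you describe does close the proof; without it, the nonuniformity genuinely reappears in~\eqref{bg1sn}.

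A minor remark: there is no need to introduce $\tilde\lambda<\lambda$; the paper uses $\lambda$ itself in the weight and this works because the suprema are taken over the relevant half-orbits.
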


\begin{proof}
Assume that $(A_n)_{n\in \Z}$ admits a strong polynomial dichotomy with respect to $(\|\cdot\|_n)_{n\in \N}$ such that~\eqref{ln1} holds with $C>0$ and $\delta \ge 0$. Observe that~\eqref{pd1sn} and~\eqref{ln1} imply that
\[
\| \cA(m,n)P_nx\| \le \| \cA(m, n)P_nx \|_m \le {K}\bigg (\frac m n\bigg )^{-\lambda}\|x\|_n \le C{K}\bigg (\frac m n\bigg )^{-\lambda}n^\delta \|x\|,
\]
and
\[
\| \cA(n, m)Q_mx\| \le \| \cA(n, m)Q_mx\| _n \le {K} \bigg (\frac m n\bigg )^{-\lambda}\|x\|_m \le C{K}\bigg (\frac m n\bigg )^{-\lambda}m^\delta \|x\|
\]
for $m\ge n$ and $x\in \R^d$. Simialrly, \eqref{bg1sn} and~\eqref{ln1} give that
\[
\| \cA(m,n)x\| \le C{K}\bigg (\frac m n\bigg )^{a}n^\delta \|x\|,\quad \| \cA(n,m)x\| \le C{K}\bigg (\frac m n\bigg )^{a}m^\delta \|x\|
\]
for $m\ge n$ and $x\in \R^d$. Thus, we conclude that $(A_n)_{n\in \N}$ admits a nonuniform strong polynomial dichotomy.

Suppose now that $(A_n)_{n\in \N}$ admits a nonuniform strong polynomial dichotomy.  For $n\in \N$ and $x\in \R^d$, we define
\[
\|x\|_n:=\|x\|_n^s +\|x\|_n^u,
\]
where
\[
\|x\|_n^s:=\sup_{m\ge n} \bigg(\| \cA(m,n)P_nx\| \bigg (\frac m n\bigg )^{\lambda} \bigg )+\sup_{m<n}\bigg (\| \cA(m,n)P_nx\| \bigg (\frac n m\bigg )^{-a} \bigg ),
\]
and
\[
\|x\|_n^u:=\sup_{m< n}\bigg (\| \cA(m,n)Q_nx\| \bigg (\frac n m\bigg )^{\lambda} \bigg )+\sup_{m\ge n}\bigg (\| \cA(m,n)Q_n x\| \bigg (\frac m n\bigg )^{-a} \bigg ).
\]
Observe that
\[
\|x\| \le \|P_nx\|+\|Q_nx\| \le \|x\|_n^s+\|x\|_n^u=\|x\|_n,
\]
for $n\in \N$ and $x\in X$. Moreover, \eqref{pd1} and~\eqref{bg} imply that
\[
\|x\|_n^s\le Kn^\varepsilon \|x\| +Kn^\varepsilon \|P_nx\|\le ({K}+{K}^2)n^{2\varepsilon} \|x\|,
\]
and similarly,
$
\|x\|_n^u \le ({K}+{K}^2)n^{2\varepsilon} \|x\|
$
for $x\in \R^d$ and $n\in \N$. We conclude that~\eqref{ln1} holds with $C=2({K}+{K}^2)>0$ and $\delta=2\varepsilon \ge 0$.

Moreover, for $m\ge n$   and $x\in \R^d$ we have that
\begin{equation}\label{x1}
\begin{split}
&\| \cA(m,n)P_n x\|_m = \| \cA(m,n)P_n x\|^s_m
\\
&=\sup_{k\ge m}\bigg(\| \cA(k,n)P_nx\| \bigg (\frac k m\bigg )^{\lambda} \bigg )+\sup_{k<m}\bigg (\| \cA(k,n)P_nx\| \bigg (\frac m k\bigg )^{-a} \bigg ) \\
&\le \sup_{k\ge m}\bigg(\| \cA(k,n)P_nx\| \bigg (\frac k m\bigg )^{\lambda} \bigg )+\sup_{n\le k<m}\bigg(\| \cA(k,n)P_nx\| \bigg (\frac k m\bigg )^{\lambda} \bigg )\\
&\phantom{\le}+\sup_{k<n}\bigg (\| \cA(k,n)P_nx\| \bigg (\frac m k\bigg )^{-a} \bigg ) \\
&\le 2\sup_{k\ge n}\bigg(\| \cA(k,n)P_nx\| \bigg (\frac k m\bigg )^{\lambda} \bigg )+\sup_{k<n}\bigg (\| \cA(k,n)P_nx\| \bigg (\frac m k\bigg )^{-a} \bigg ) \\
&= 2\bigg (\frac m n\bigg )^{-\lambda}\sup_{k\ge n}\bigg(\| \cA(k,n)P_nx\| \bigg (\frac k n\bigg )^{\lambda} \bigg ) \\
&\phantom{=}+\bigg (\frac m n\bigg )^{-a}\sup_{k<n}\bigg (\| \cA(k,n)P_nx\| \bigg (\frac n k\bigg )^{-a} \bigg ), \\
&\le 2\bigg (\frac m n\bigg )^{-\lambda} \|x\|_n,
\end{split}
\end{equation}
establishing the first inequality in~\eqref{pd1sn}. On the other hand, for $m\le n$ and $x\in \R^d$ we have that
\begin{equation}\label{x2}
\begin{split}
&\| \cA(m,n)P_n x\|_m = \| \cA(m,n)P_n x\|^u_m
\\
&\le \sup_{k\ge n}\bigg(\| \cA(k,n)P_nx\| \bigg (\frac k m\bigg )^{\lambda} \bigg )+\sup_{m\le k<n}\bigg(\| \cA(k,n)P_nx\| \bigg (\frac m k\bigg )^{-a} \bigg )\\
&\phantom{\le}+\sup_{k<m}\bigg (\| \cA(k,n)P_nx\| \bigg (\frac m k\bigg )^{-a} \bigg ) \\
&\le \sup_{k\ge n}\bigg(\| \cA(k,n)P_nx\| \bigg (\frac k m\bigg )^{\lambda} \bigg )+2\sup_{k<n}\bigg (\| \cA(k,n)P_nx\| \bigg (\frac m k\bigg )^{-a} \bigg )\\
&=\bigg (\frac n m\bigg )^{\lambda} \sup_{k\ge n}\bigg(\| \cA(k,n)P_nx\| \bigg (\frac k n\bigg )^{\lambda} \bigg )\\
&\phantom{=}+2 \bigg (\frac n m\bigg )^a\sup_{k<n} \bigg (\| \cA(k,n)P_nx\| \bigg (\frac n k\bigg )^{-a} \bigg )\\
&\le 2\bigg (\frac n m\bigg )^a \|x\|_n.
\end{split}
\end{equation}
Similarly, one can establish that
\begin{equation}\label{x3}
\| \cA(m,n)Q_n x\|_m  \le 2\bigg (\frac n m\bigg )^{-\lambda} \|x\|_n \quad \text{for $x\in \R^d$ and $m\le n$,}
\end{equation}
and
\begin{equation}\label{x4}
\|\cA(m,n)Q_n x\|_m \le 2\bigg (\frac m n\bigg )^a \|x\|_n, \quad \text{for $x\in \R^d$ and $m\ge n$.}
\end{equation}
In particular, the second estimate in~\eqref{pd1sn} follows from~\eqref{x3}. Moreover, \eqref{x1} and~\eqref{x4} imply that
\[
\begin{split}
\|\cA(m,n)x\|_m  &\le \|\cA(m,n)P_nx\|_m+\|\cA(m,n)Q_nx\|_m \\
&\le 2\bigg (\frac m n\bigg )^{-\lambda} \|x\|_n+2\bigg (\frac m n\bigg )^a \|x\|_n 
\le 4\bigg (\frac m n\bigg )^a \|x\|_n,
\end{split}
\]
for $m\ge n$ and $x\in \R^d$. Hence, the first inequality in~\eqref{bg1sn} holds. Similarly, using~\eqref{x2} and~\eqref{x3} one can establish the second estimate in~\eqref{bg1sn}. The proof is completed.
\end{proof}

\begin{remark}
We note that a result similar to Proposition~\ref{LN} was established in~\cite[Proposition 7]{D}. However, the notion of nonuniform strong polynomial dichotomy considered in~\cite{D} is slightly different from the one considered in the present paper. More precisely, the notion introduced in~\cite{D} requires only that the first inequality in~\eqref{bg} holds.
\end{remark}

\begin{definition}\label{1753}
Let $J\in \{\Z, \Z^+, \Z^- \}$ and
$(\|\cdot\|_n)_{n\in J}$ be a sequence of norms on $\R^d$.
 We say that a sequence $(A_n)_{n\in J}$ of invertible operators on $\R^d$ admits a \emph{strong exponential dichotomy with respect to} $(\|\cdot\|_n)_{n\in J}$,
 if there exist constants  ${K}>0$,  $a\ge \lambda>0$ and a sequence of projections $(P_n)_{n\in J}$ on $\R^d$ such that the following properties hold:
\begin{itemize}
\item for $n, n+1\in J$, \eqref{pro} holds;
\item for $m\ge n$, $m,n\in J$ and $x\in \R^d$,
\begin{equation}\label{ed1sn}
\begin{split}
&\| \cA(m,n)P_nx\|_m \le {K}e^{-\lambda (m-n)}\|x\|_n,
\\
&\| \cA(n, m)Q_mx\|_n \le {K}e^{-\lambda (m-n)}\|x\|_m,
\end{split}
\end{equation}
where $Q_m:={\rm Id}-P_m$;
\item  for $m\ge n$, $m, n\in J$ and $x\in \R^d$,
\begin{equation}\label{bg1sne}
\| \cA(m,n)x\|_m \le {K}e^{a(m-n)}\|x\|_n, \quad \| \cA(n,m)x\|_n \le {K}e^{a(m-n)} \|x\|_m.
\!\!\!\!\!\!\!\!\!\!\!\!
\end{equation}
\end{itemize}
\end{definition}
We have the following version of Proposition~\ref{LN} for exponential dichotomies (see~\cite[Proposition 12]{BDV1}).
\begin{proposition}\label{LN2}
Let $J\in \{\Z, \Z^+, \Z^-\}$ and
$(A_n)_{n\in J}$ be a sequence of invertible operators on $\R^d$. The following properties are equivalent:
\begin{enumerate}
\item $(A_n)_{n\in J}$ admits a nonuniform strong exponential dichotomy dichotomy;
\item $(A_n)_{n\in J}$ admits a strong exponential dichotomy with respect to $(\|\cdot \|_n)_{n\in J}$ with the property that there exist $C>0$ and $\delta \ge 0$ such that
\begin{equation}\label{ln2}
\|x\| \le \|x\|_n \le Ce^{\delta |n|} \|x\|, \quad \text{for $x\in \R^d$ and $n\in J$.}
\end{equation}
\end{enumerate}
\end{proposition}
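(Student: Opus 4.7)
The plan is to adapt the proof of Proposition~\ref{LN} by replacing the polynomial weights $(m/n)^{\pm\lambda}$ and $(m/n)^{\pm a}$ by their exponential counterparts $e^{\pm\lambda(m-n)}$ and $e^{\pm a(m-n)}$, and replacing the prefactor $n^\varepsilon$ by $e^{\varepsilon|n|}$. The direction $(2)\Rightarrow(1)$ is immediate: substituting the right-hand inequality in \eqref{ln2} into \eqref{ed1sn} and \eqref{bg1sne}, and using the left-hand inequality to bound the ambient norm by $\|\cdot\|_m$, yields \eqref{ED-1} and \eqref{be} with constant $CK$ and nonuniformity exponent $\delta$ playing the role of $\varepsilon$.

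For $(1)\Rightarrow(2)$, I would construct a Lyapunov-type sequence of norms $\|x\|_n:=\|x\|_n^s+\|x\|_n^u$, where
\[
\|x\|_n^s:=\sup_{m\ge n}\bigl(\|\cA(m,n)P_nx\|\, e^{\lambda(m-n)}\bigr)+\sup_{m<n}\bigl(\|\cA(m,n)P_nx\|\, e^{-a(n-m)}\bigr),
\]
and $\|x\|_n^u$ is defined analogously with $P_n$ replaced by $Q_n$ and the roles of $m\ge n$, $m<n$ interchanged (the decay weight $e^{\lambda\cdot}$ attached to the side on which $Q_n$ contracts, namely the backward side). The bound $\|x\|\le\|x\|_n$ comes from taking $m=n$ inside both suprema and invoking $\|x\|\le\|P_nx\|+\|Q_nx\|$. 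For the upper estimate, \eqref{ED-1} controls the stable forward and unstable backward suprema, each by $Ke^{\varepsilon|n|}\|x\|$, while \eqref{be} controls the opposite ranges after first bounding $\|P_nx\|,\|Q_nx\|\le Ke^{\varepsilon|n|}\|x\|$ (which follows from applying the dichotomy bound with $m=n$ to the forward iterate $\cA(n+1,n)P_n$ together with the dichotomy projection estimates, or directly from \eqref{ED-1} at $m=n$); assembling all four contributions gives $\|x\|_n\le 2(K+K^2)e^{2\varepsilon|n|}\|x\|$, so that \eqref{ln2} holds with $C=2(K+K^2)$ and $\delta=2\varepsilon$.

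To verify \eqref{ed1sn} and \eqref{bg1sne}, I would mimic the computation \eqref{x1}--\eqref{x4}. For example, to prove the first inequality in \eqref{ed1sn}, fix $m\ge n$ and expand $\|\cA(m,n)P_nx\|_m=\|\cA(m,n)P_nx\|_m^s$, splitting the suprema into the ranges $k\ge m$, $n\le k<m$ and $k<n$. Using $\cA(k,m)\cA(m,n)=\cA(k,n)$ and the additive identities $e^{\lambda(k-m)}=e^{-\lambda(m-n)}e^{\lambda(k-n)}$ for $k\ge n$, and $e^{-a(m-k)}=e^{-a(m-n)}e^{-a(n-k)}$ for $k<n$, one factors out $e^{-\lambda(m-n)}$ and $e^{-a(m-n)}$ respectively; since $a\ge\lambda$, on the middle range $n\le k<m$ the weight $e^{\lambda(k-m)}\le e^{-\lambda(m-n)}e^{\lambda(k-n)}$ can be absorbed into the forward stable supremum, giving a factor $2e^{-\lambda(m-n)}\|x\|_n$. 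The three remaining inequalities are handled symmetrically, and the bounded growth estimates \eqref{bg1sne} follow by combining the $P_n$ and $Q_n$ contributions and using $e^{-\lambda(m-n)}\le e^{a(m-n)}$.

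The argument is essentially a verbatim transcription of Proposition~\ref{LN}'s proof, so no new ideas are required; the main obstacle is purely bookkeeping, namely converting the multiplicative identities $(m/n)(n/k)=(m/k)$ into their additive exponential analogues and carefully tracking the sign conventions for $|n|$ when $J=\Z^-$ (in which case $|n|=-n$ and the weight $e^{\varepsilon|n|}$ grows as $n\to-\infty$). Since the construction already appears in the form stated in~\cite[Proposition~12]{BDV1}, one could alternatively simply cite that reference and indicate that the passage from a one-sided to a two-sided index set, and between strong and ordinary dichotomies, is routine.
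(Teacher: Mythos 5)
Your proof is correct. Note that the paper itself does not give a proof of Proposition~\ref{LN2} but simply cites~\cite[Proposition~12]{BDV1}; you acknowledge this as an alternative route and then supply the details yourself. Your construction of the adapted norms
\[
\|x\|_n^s=\sup_{m\ge n}\bigl(\|\cA(m,n)P_nx\|\, e^{\lambda(m-n)}\bigr)+\sup_{m<n}\bigl(\|\cA(m,n)P_nx\|\, e^{-a(n-m)}\bigr)
\]
and its analogue for $\|x\|_n^u$ is the natural exponential counterpart of the polynomial construction in Proposition~\ref{LN}, and the subsequent verification (splitting each supremum into $k\ge m$, $n\le k<m$, $k<n$ and absorbing the middle range using $a\ge\lambda$) goes through exactly as in~\eqref{x1}--\eqref{x4}. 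The bound $\|x\|_n\le 2(K+K^2)e^{2\varepsilon|n|}\|x\|$ with $C=2(K+K^2)$, $\delta=2\varepsilon$ is right, and the observation about $J=\Z^-$ (so that $e^{\varepsilon|n|}$ grows as $n\to-\infty$) is a correct and necessary bookkeeping point. The $(2)\Rightarrow(1)$ direction, via sandwiching with~\eqref{ln2}, is also correct. In short, your argument is what the cited reference contains and what the paper's proof of Proposition~\ref{LN} is a polynomial-scale mirror image of.
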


The following result establishes the relationship between exponential and polynomial dichotomies with respect to a sequence of norms.
\begin{proposition}\label{WW}
Let
$(\|\cdot\|_n)_{n\in \N}$ be a sequence of norms on $\R^d$.
Suppose that $(A_n)_{n\in \N}$ is a sequence of invertible operators on $\R^d$ with the property that there exist ${K}, a>0$ such that~\eqref{bg1sn} holds, and set
\begin{equation}\label{BN}
B_n:=\cA(2^{n+1}, 2^n), \quad n\ge 0.
\end{equation}
The following properties are equivalent:
\begin{itemize}
\item $(A_n)_{n\in \N}$ admits a strong polynomial dichotomy with respect to $(\|\cdot\|_n)_{n\in \N}$;
\item $(B_n)_{n\in \Z^+}$ admits a strong exponential dichotomy with respect to $(\|\cdot\|_{2^n})_{n\in \Z^+}$.
\end{itemize}
\end{proposition}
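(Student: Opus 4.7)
The plan rests on the elementary observation that under the reparametrization $n \mapsto 2^n$, polynomial rates become exponential: $(2^m/2^n)^{\pm \lambda} = e^{\pm \lambda \log 2 \,(m-n)}$. Combined with the telescoping identity $\cA(2^m, 2^n) = B_{m-1} \cdots B_n$ for $m \geq n$ (extending to $m<n$ via inverses by the cocycle property), which follows immediately from the definition~\eqref{BN}, this translates polynomial dichotomy estimates for $(A_n)$ at arbitrary indices into exponential dichotomy estimates for $(B_n)$ at dyadic indices $2^n$, and vice versa.

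For the forward implication, I would set $\tilde P_n := P_{2^n}$ for $n \in \Z^+$, where $(P_n)$ are the projections from the polynomial dichotomy. The invariance $B_n \tilde P_n = \tilde P_{n+1} B_n$ follows by iterating $A_k P_k = P_{k+1} A_k$ over $k = 2^n, \ldots, 2^{n+1}-1$. The estimates~\eqref{ed1sn} and~\eqref{bg1sne} for $(B_n)$ then reduce directly to~\eqref{pd1sn} and~\eqref{bg1sn} for $(A_n)$ evaluated at dyadic arguments, with rates transforming as $\tilde \lambda = \lambda \log 2$ and $\tilde a = a \log 2$ while $K$ is preserved.

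For the converse, I would extend the dyadically-indexed projections $(\tilde P_n)$ to all of $\N$ by propagation, setting $P_n := \cA(n, 2^k) \tilde P_k \cA(2^k, n)$ whenever $2^k \leq n < 2^{k+1}$. The identity $B_k \tilde P_k B_k^{-1} = \tilde P_{k+1}$ (just the $\tilde P$-invariance rewritten) guarantees consistency at the boundary $n = 2^{k+1}$, where the two block definitions coincide, and the invariance $A_n P_n = P_{n+1} A_n$ on $\N$ follows by a direct cocycle computation. For the decay estimate, given $m \geq n$ in $\N$ I choose $k, l \in \Z^+$ with $2^k \leq n < 2^{k+1}$, $2^l \leq m < 2^{l+1}$, and apply the cocycle property to write
\[
\cA(m, n) P_n = \cA(m, 2^l) \, \cA(2^l, 2^k) \, \tilde P_k \, \cA(2^k, n).
\]
The two outer factors contribute only universal constants in the $(\|\cdot\|_n)$-norm, since the hypothesized polynomial bounded growth~\eqref{bg1sn} gives bounds $K(m/2^l)^a$ and $K(n/2^k)^a$, each at most $K\cdot 2^a$ because $m < 2^{l+1}$ and $n < 2^{k+1}$. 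The middle factor is controlled by the exponential dichotomy for $(B_n)$, yielding $K e^{-\tilde \lambda(l-k)}$. This I convert back via the elementary inequality $l-k \geq \log_2(m/n) - 1$ to a polynomial bound $(m/n)^{-\lambda}$ with $\lambda := \tilde \lambda / \log 2$. The unstable estimate in~\eqref{pd1sn} is handled symmetrically, using $Q_n = \cA(n, 2^k) \tilde Q_k \cA(2^k, n)$ and the analogous decomposition of $\cA(n, m) Q_m$.

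The main obstacle is the bookkeeping in the converse direction: correctly matching block-defined projections across dyadic boundaries and absorbing the discrepancy between the discrete dyadic sampling and the continuous polynomial decay rate at arbitrary $n, m$. The first is automatic thanks to the identification $P_{2^k} = \tilde P_k$, while the second is precisely the role of the hypothesized bounded growth~\eqref{bg1sn}: passing from $n$ and $m$ to their dyadic predecessors $2^k$ and $2^l$ costs only the bounded multiplicative factor $K^2 \cdot 2^{2a}$, so the rate is preserved up to an increase of the constant.
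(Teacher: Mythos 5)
Your proposal is correct and takes essentially the same approach as the paper: in the forward direction both use $\tilde P_n := P_{2^n}$ and translate rates via $(2^m/2^n)^{\pm\lambda}=e^{\pm\lambda\log 2\,(m-n)}$; in the converse direction both extend the dyadic projections by conjugation, check invariance across the dyadic boundary using $B_k\tilde P_k B_k^{-1}=\tilde P_{k+1}$, decompose $\cA(m,n)P_n$ into two bounded outer factors plus a middle block cocycle, and convert back via $l-k \geq \log_2(m/n)-1$ (the paper phrases this with floors of $\log k/\log 2$, but it is the same estimate).
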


\begin{proof}
Suppose that $(A_n)_{n\in \N}$ admits a  strong polynomial dichotomy with respect $(\|\cdot\|_n)_{n\in \N}$. It follows from~\eqref{pro} that
\[
P_{2^{n+1}}B_n=P_{2^{n+1}} \cA(2^{n+1}, 2^n)=\cA(2^{n+1}, 2^n)P_{2^n}=B_nP_{2^n}, \quad n\in \Z^+.
\]
By~\eqref{pd1sn}, we have that
\[
\| \mathcal B(m,n)P_{2^n}x\|_{2^m}=\| \cA(2^m, 2^n)P_{2^n} x\|_{2^m} \le {K}e^{-\lambda \log 2 (m-n)}\|x\|_{2^n},
\]
for $m\ge n$ and $x\in \R^d$. Similarly,
\[
\| \mathcal B(m, n)Q_{2^n}x\|_{2^m} \le {K}e^{-\lambda \log 2 (n-m)}\|x\|_{2^n},
\]
for $m\le n$ and $x\in \R^d$, where $Q_{2^n}:=\Id-P_{2^n}$. Finally, \eqref{bg1sn} implies that
\begin{equation}\label{te}
\begin{split}
&\|\mathcal B(m,n)x\|_{2^m} \le {K} e^{a\log 2 (m-n)} \|x\|_{2^m},
\\
&\| \mathcal B(n,m)x\|_{2^n}\le {K}e^{a\log 2 (m-n)}\|x\|_{2^m},
\end{split}
\end{equation}
for $m\ge n$ and $x\in \R^d$. We conclude that $(B_n)_{n\in \Z^+}$ admits a strong exponential dichotomy with respect to $(\|\cdot\|_{2^n})_{n\in \Z^+}$.

Assume now that $(B_n)_{n\in \Z^+}$ admits a strong exponential dichotomy with respect to $(\|\cdot\|_{2^n})_{n\in \Z^+}$ and projections $\tilde P_n$, $n\in \Z^+$. Namely, there exist ${K}', \lambda >0$ such that
\begin{equation}\label{z1}
\| \mathcal B(m,n)\tilde P_n x\|_{2^m}\le {K}'e^{-\lambda (m-n)} \|x\|_{2^n}, \quad \text{for $m\ge n$ and $x\in \R^d$,}
\end{equation}
\begin{equation}\label{z2}
\| \mathcal B(m,n)\tilde Q_n x\|_{2^m}\le {K}'e^{-\lambda (n-m)} \|x\|_{2^n}, \quad \text{for $m\le n$ and $x\in \R^d$,}
\end{equation}
where $\tilde Q_n:=\Id-\tilde P_n$,
and
\begin{equation}\label{pro2}
\tilde P_{n+1}B_n=B_n\tilde P_n, \quad n\in \Z^+.
\end{equation}
Take an arbitrary $k\in \N$. Then, there exists a unique $n\in \Z^+$ such that $2^{n} \le k<2^{n+1}$, which enables us to set
\[
P_k:=\cA(k,2^{n})\tilde P_n\cA(2^{n}, k).
\]
Observe that $P_k$ is a projection for each $k\in \N$.
We claim that~\eqref{pro} holds.  In fact, when $2^n\le k<k+1<2^{n+1}$, we have
\begin{align*}
P_{k+1}A_k
&=\cA(k+1,2^{n})\tilde P_n\cA(2^{n}, k+1)A_k
\\
&=A_k\cA(k,2^{n})\tilde P_n\cA(2^{n}, k)=A_k P_k
\end{align*}
and, when $2^n\le k<k+1 = 2^{n+1}$, we have (using~\eqref{pro2}) that 
\begin{align*}
P_{k+1}A_k=\tilde P_{n+1}A_k
&=\cA(k+1,2^{n+1})\tilde P_{n+1}\cA(2^{n+1}, k+1)A_k
\\
&=A_k\cA(k,2^{n+1})\tilde P_{n+1}\cA(2^{n+1}, k)
\\
&= A_k\cA(k,2^{n})\cA(2^{n},2^{n+1})\tilde P_{n+1}\cA(2^{n+1}, 2^{n})\cA(2^{n}, k)
\\
&=A_k\cA(k,2^{n})B_n^{-1}\tilde P_{n+1}B_n\cA(2^{n}, k)
\\
&=A_k\cA(k,2^{n})\tilde P_n\cA(2^{n}, k)=A_kP_{k}.
\end{align*}
Take now arbitrary $k,l \in \N$ such that $k\ge l$, and choose $m, n\in \Z^+$ such that $2^{m} \le k<2^{m+1}$ and $2^{n} \le l<2^{n+1}$. Obviously, $m\ge n\ge 0$. Moreover, observe that
\begin{align}\label{aap}
\cA(k,l)P_l=\cA(k, 2^{n})\tilde P_n\cA(2^{n}, l)=\cA(k, 2^{m})\mathcal B(m,n)\tilde P_n\cA(2^{n}, l).
\end{align}
Hence, by~\eqref{bg1sn} and~\eqref{z1} we have that
\[
\begin{split}
\|\cA(k,l)P_lx\|_k &=\|\cA(k, 2^{m})\mathcal B(m,n)\tilde P_n\cA(2^{n}, l)x\|_k \\
&\le {K}\bigg (\frac{k}{2^m} \bigg )^a \| \mathcal B(m,n)\tilde P_n\cA(2^{n}, l)x\|_{2^m}\\
&\le KK'2^ae^{-\lambda (m-n)} \|\cA(2^{n}, l)x\|_{2^n} \\
&\le KK'2^ae^{-\lambda (m-n)} {K}\bigg (\frac{l}{2^n} \bigg )^a \|x\|_l\\
&\le {K}^2{K}'4^ae^{-\lambda (m-n)} \|x\|_l 
={K}^2{K}'4^a e^{-\lambda \left( \lfloor \frac{\log k}{\log 2}\rfloor - \lfloor \frac{\log l}{\log 2} \rfloor \right)}\|x\|_l \\
&\le  {K}^2{K}'4^ae^\lambda \bigg (\frac k l\bigg )^{-\frac{\lambda}{\log 2}} \|x\|_l,
\end{split}
\]
for $k\ge l$ and $x\in \R^d$. This proves the first inequality in~\eqref{pd1sn}. Similarly, one can establish the second inequality in~\eqref{pd1sn}. Then, \eqref{pd1sn} and~\eqref{bg1sn} imply that
$(A_n)_{n\in \N}$ admits a  strong polynomial dichotomy with respect to $(\|\cdot\|_n)_{n\in \N}$. Thus, the  proof is completed.
\end{proof}

\begin{remark}
We observe that a result similar to Proposition~\ref{WW} was formulated in~\cite[Theorem 3.1]{DSS}. More precisely, \cite[Theorem 3.1]{DSS} does not require that~\eqref{bg1sn} holds and it deals with not necessarily strong polynomial and exponential dichotomies with respect to a sequence of norms.
\end{remark}

\subsection{Dichotomy spectra}

\begin{definition}\label{def-01}
Let $\mathbb A=(A_n)_{n\in \N}$ be a sequence of invertible operators and $\mathcal S:=\{ \|\cdot \|_n; \ n\in \N\}$ be a sequence of norms on $\R^d$. We define
$\Sigma_{PD, \mathbb A, \mathcal S}$ to be the set of all $\tau\in \R$ with the property that $\big (\big (\frac{n+1}{n}\big )^{-\tau}A_n\big )_{n\in \N}$ does not admit a strong polynomial dichotomy with respect to $\mathcal S$.
\end{definition}

\begin{definition}\label{def-SED}
Let $\mathbb A=(A_n)_{n\in \Z^+}$ be a sequence of invertible operators and $\mathcal S:=\{ \|\cdot \|_n; \ n\in \Z^+\}$ be a sequence of norms on $\R^d$. We define
$\Sigma_{ED, \mathbb A, \mathcal S}$ to be the set of all $\tau>0$ with the property that $\big (\frac 1 \tau A_n\big)_{n\in \Z^+}$ does not admit a strong exponential dichotomy with respect to $\mathcal S$.
\end{definition}

\begin{remark}\label{aaa}
Similarly, we can introduce $\Sigma_{ED, \mathbb A, \mathcal S}$ for two-sided sequences of invertible operators $(A_n)_{n\in \Z}$ and norms $\mathcal S=\{ \| \cdot \|_n; \ n\in \Z\}$.
\end{remark}

\begin{proposition}\label{22}
Let $\mathbb{A}=(A_n)_{n\in \Z^+}$ be a sequence of invertible operators and $\mathcal S:=\{ \|\cdot \|_n; \ n\in \Z^+\}$ be a sequence of norms on $\R^d$ with the property that there exist ${K}, a>0$ such that~\eqref{bg1sne} holds. Then, there exist $1\le r\le d$ and
$
a_1\le b_1 <\ldots <a_r \le b_r,
$
such that
\[
\Sigma_{ED, \mathbb A, \mathcal S}=\bigcup_{i=1}^r [a_i, b_i].
\]
\end{proposition}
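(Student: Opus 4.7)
The plan is to prove Proposition~\ref{22} by adapting the classical Sacker--Sell argument for the dichotomy spectrum to the present setting of strong exponential dichotomies with respect to the sequence of norms $\mathcal S$. Set $\rho := (0,\infty) \setminus \Sigma_{ED,\mathbb A,\mathcal S}$; I will show that $\rho$ is open, $\Sigma_{ED,\mathbb A,\mathcal S}$ is bounded, and the connected components of $\rho$ are indexed by the dimension of a canonically defined stable subspace, which can take at most $d+1$ distinct values.

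First I would establish openness of $\rho$ via roughness. If $(\frac{1}{\tau_0} A_n)$ admits a strong exponential dichotomy with respect to $\mathcal S$, then for $\tau$ close to $\tau_0$ the cocycle $\frac{1}{\tau} A_n = (\tau_0/\tau)\cdot \frac{1}{\tau_0} A_n$ is a scalar multiplicative perturbation whose rescaling factor tends to $1$. A standard fixed-point argument for the invariant-subspace equation, carried out on sequences weighted by $(\|\cdot\|_n)$, preserves both the dichotomy estimates \eqref{ed1sn} and the bounded-growth estimates \eqref{bg1sne} with slightly worsened constants, and also yields continuous dependence of the projections $P^\tau_n$ on $\tau$. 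Boundedness $\Sigma_{ED,\mathbb A,\mathcal S} \subseteq [e^{-a}, e^a]$ follows directly from \eqref{bg1sne}: for $\tau > e^a$ we obtain $\|\frac{1}{\tau^{m-n}}\cA(m,n) x\|_m \le K e^{(a-\log\tau)(m-n)}\|x\|_n$, which is uniform exponential contraction, yielding the trivial strong dichotomy with $P_n = \Id$; symmetrically for $\tau < e^{-a}$ the second estimate in \eqref{bg1sne} gives the trivial dichotomy with $P_n = \mathbf 0$.

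Next, for $\tau \in \rho$, define the stable subspace $E^s_\tau := \Ima(P_0^\tau) \subseteq \R^d$. Two properties are crucial. \emph{Monotonicity in $\tau$}: if $\tau_1 < \tau_2$ both lie in $\rho$ and $v \in E^s_{\tau_1}$, then $\tau_1^{-n}\|\cA(n,0) v\|_n \to 0$ exponentially, so $\tau_2^{-n}\|\cA(n,0) v\|_n$ decays even faster, forcing $v \in E^s_{\tau_2}$; thus $E^s_{\tau_1} \subseteq E^s_{\tau_2}$. \emph{Local constancy of dimension}: by the continuity of $\tau \mapsto P^\tau_0$ within a connected component of $\rho$, the integer $\dim E^s_\tau$ is a continuous function of $\tau$ and hence constant on each component.

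Finally, a gap-filling argument shows that $\dim E^s_\tau$ strictly increases across each connected component of $\Sigma_{ED,\mathbb A,\mathcal S}$: if two adjacent resolvent components had equal dimension, their common stable subspace together with a continuously chosen unstable complement would extend, via the same fixed-point machinery used for roughness, to a strong exponential dichotomy on the entire separating interval, contradicting its spectrality. Since $\dim E^s_\tau$ takes values in $\{0,1,\ldots,d\}$, the open set $\rho$ has at most $d+1$ components in $(0,\infty)$, so $\Sigma_{ED,\mathbb A,\mathcal S}$ is a disjoint union of at most $d$ closed intervals $[a_i,b_i]$ arranged as claimed. Nonemptiness ($r\ge 1$) follows because $\dim E^s_\tau = 0$ for $\tau < e^{-a}$ while $\dim E^s_\tau = d$ for $\tau > e^a$, so at least one jump must occur. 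The main obstacle I anticipate is executing the gap-filling step while preserving the \emph{strong}-dichotomy requirement $a \ge \lambda > 0$ uniformly in $\tau$ across the spectral interval; this requires careful control of the roughness constants and of the bounded-growth estimates \eqref{bg1sne} under the multiplicative rescaling, so that the glued dichotomy not only exists but belongs to the correct class.
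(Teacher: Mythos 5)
Your proposal follows the same Sacker--Sell strategy as the paper (Appendix~B): bound the spectrum via \eqref{bg1sne}, show the resolvent is open, associate to each resolvent point the stable subspace $\Ima P_0^\tau$, and count components via its dimension. The strategic outline is correct. The key difference is in the machinery. You invoke a roughness theorem (a fixed-point argument for the invariant-subspace equation) to obtain openness and continuity of $\tau\mapsto P_0^\tau$, and you again appeal to ``the same fixed-point machinery'' for the gap-filling step. The paper instead exploits throughout that the perturbation $\tau_0\mapsto\tau$ is a \emph{scalar} rescaling, which means the same projections $P_n$ serve for all nearby $\tau$: one simply multiplies the dichotomy estimate for $\tau_0$ by $(\tau_0/\tau)^{m-n}$ and reads off a dichotomy for $\tau$ with the same $P_n$ and slightly worse rate, with no fixed-point argument at all. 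This also makes the gap-filling (the paper's Lemma~\ref{258}) a one-line interpolation: if $S_{r_1}(0)=S_{r_2}(0)$ then one may take a common sequence of projections for $r_1$ and $r_2$ (Lemma~\ref{L}), and for any $r\in[r_1,r_2]$ the forward dichotomy estimate at $r$ follows from the one at $r_1$ since $r\ge r_1$, while the backward estimate follows from the one at $r_2$ since $r\le r_2$.

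This is where your sketch is thinnest, and you rightly flag it as the main obstacle. The fixed-point roughness mechanism is local in the spectral parameter, so extending a dichotomy across an entire spectral interval requires either a patching/compactness argument with uniform control of constants, or an a priori argument that the interval is actually in the resolvent. You leave this open, saying it ``requires careful control of the roughness constants.'' In the paper this difficulty simply does not arise, because the interpolation inequality above is uniform over $r\in[r_1,r_2]$ with unchanged constants and projections. Relatedly, your concern about preserving the ``strong'' requirement $a\ge\lambda$ under rescaling is also a non-issue: the bounded-growth estimate \eqref{bg1sne} for $(\tfrac1r A_n)$ holds automatically with exponent $a+|\log r|$ and the same $K$, and since $a$ may always be enlarged, $a\ge\lambda$ is never a constraint. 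So your proposal is correct in spirit, but it replaces two elementary monotonicity computations with a perturbation-theoretic argument that is both unnecessary here and underspecified precisely at the point where the proof needs to close.
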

The proof of the this proposition will be given in Section~\ref{A-B}.

\begin{corollary}\label{KOR}
Let $\mathbb A=(A_n)_{n\in \N}$ be a sequence of invertible operators and $\mathcal S:=\{ \|\cdot \|_n; \ n\in \N\}$ be a sequence of norms on $\R^d$ with the property that there exist ${K}, a>0$ such that~\eqref{bg1sn} holds.  Furthermore, let $\mathbb B=(B_n)_{n\in \Z^+}$ be given by~\eqref{BN} and set $\tilde{\mathcal S}:=\{ \|\cdot \|_{2^n}; \ n\in \Z^+\}$. Then,
\[
\tau\in \Sigma_{PD, \mathbb A, \mathcal S}\iff 2^\tau \in \Sigma_{ED, \mathbb B, \tilde{\mathcal S}}.
\]

\end{corollary}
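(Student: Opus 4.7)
The plan is to reduce Corollary \ref{KOR} to Proposition \ref{WW} applied to the rescaled sequence $\tilde{\mathbb A} := \bigl(\bigl(\tfrac{n+1}{n}\bigr)^{-\tau} A_n\bigr)_{n\in \N}$. First I would identify the block sequence associated to $\tilde{\mathbb A}$ via formula \eqref{BN}. Writing $\tilde A_k = \bigl(\tfrac{k+1}{k}\bigr)^{-\tau} A_k$ and using the telescoping identity
\[
\prod_{k=2^n}^{2^{n+1}-1} \frac{k+1}{k} = \frac{2^{n+1}}{2^n} = 2,
\]
one obtains $\tilde B_n = 2^{-\tau} B_n$ for every $n\in \Z^+$.

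Next I would check that $\tilde{\mathbb A}$ satisfies the growth hypothesis \eqref{bg1sn} required to invoke Proposition \ref{WW}. Since $\tilde{\cA}(m,n) = (m/n)^{-\tau}\cA(m,n)$ and $\tilde{\cA}(n,m)=(m/n)^{\tau}\cA(n,m)$, the bounds for $\mathbb A$ give $\|\tilde{\cA}(m,n)x\|_m \le K(m/n)^{a-\tau}\|x\|_n$ and $\|\tilde{\cA}(n,m)x\|_n \le K(m/n)^{a+\tau}\|x\|_m$ for $m\ge n$. Replacing $a$ by $\tilde a := \max\{a-\tau,\, a+\tau,\, a\}$ puts both estimates in the form \eqref{bg1sn}, so Proposition \ref{WW} applies and yields: $\tilde{\mathbb A}$ admits a strong polynomial dichotomy with respect to $\mathcal S$ if and only if $\tilde{\mathbb B}=(\tilde B_n)_{n\in \Z^+}$ admits a strong exponential dichotomy with respect to $\tilde{\mathcal S}$.

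The last step is to unwind the spectral definitions. By Definition \ref{def-01}, $\tau \in \Sigma_{PD, \mathbb A, \mathcal S}$ means exactly that $\tilde{\mathbb A}$ does not admit a strong polynomial dichotomy with respect to $\mathcal S$. On the other hand, the identity $\tilde B_n = 2^{-\tau} B_n = \tfrac{1}{2^\tau}B_n$ combined with Definition \ref{def-SED} (noting that $2^\tau>0$ for every $\tau\in \R$) says that the failure of $\tilde{\mathbb B}$ to admit a strong exponential dichotomy with respect to $\tilde{\mathcal S}$ is precisely the statement $2^\tau \in \Sigma_{ED, \mathbb B, \tilde{\mathcal S}}$. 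Composing these two equivalences gives the desired conclusion.

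I do not foresee a serious obstacle: the only two ingredients are the telescoping computation and the elementary bookkeeping on the exponents in the growth bound. The mild subtlety is that for $\tau$ of either sign the exponent $a$ in \eqref{bg1sn} for $\tilde{\mathbb A}$ must be enlarged to $a+|\tau|$, but this does not affect the hypotheses of Proposition \ref{WW}.
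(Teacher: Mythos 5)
Your proposal is correct and follows the same strategy as the paper: apply Proposition~\ref{WW} to the rescaled sequence $\bigl((\tfrac{n+1}{n})^{-\tau}A_n\bigr)_{n\in \N}$, observe via the telescoping product that the associated block sequence is $(2^{-\tau}B_n)_{n\in \Z^+}$, and unwind the definitions of the two spectra. The paper states this in two sentences, whereas you also verify explicitly that the rescaled sequence satisfies the growth hypothesis~\eqref{bg1sn} (with $a$ replaced by $a+|\tau|$), which the paper leaves implicit; this is a helpful but not structurally different addition.
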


\begin{proof}
It follows from Proposition~\ref{WW} that $\big(  (\frac{n+1}{n})^{-\tau}A_n \big )_{n\in \N}$ admits a polynomial dichotomy with respect to $\mathcal S$
 if and only if $(2^{-\tau}B_n)_{n\in \Z^+}$ admits an exponential dichotomy with respect to $\tilde{\mathcal S}$.
 Hence, $\tau\notin \Sigma_{PD, \mathbb A, \mathcal S}$ if and only if $2^\tau\notin \Sigma_{ED, \mathbb B, \tilde{\mathcal S}}$, which immediately implies the desired conclusion.
\end{proof}

\begin{corollary}\label{KOR1}
Let $\mathbb A=(A_n)_{n\in \N}$, $\mathbb B=(B_n)_{n\in \Z^+}$, $\mathcal S$ and $\tilde{\mathcal S}$ be as in the statement of Corollary~{\rm \ref{KOR}}. Then, there exist $1\le r\le d$ and
$
a_1\le b_1 <\ldots <a_r \le b_r
$
such that
\begin{align}\label{EDB}
\Sigma_{ED, \mathbb B,  \tilde{\mathcal S}}=\bigcup_{i=1}^r [a_i, b_i]
\end{align}
and
\begin{align}\label{PDA}
\Sigma_{PD, \mathbb A, \mathcal S}=\bigcup_{i=1}^r \left[\frac{\log a_i}{\log 2}, \frac{\log b_i}{\log 2}\right].
\end{align}
\end{corollary}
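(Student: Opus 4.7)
\textbf{Proof proposal for Corollary~\ref{KOR1}.}

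The plan is to chain together Proposition~\ref{22} and Corollary~\ref{KOR}. The first step is to verify that $\mathbb B=(B_n)_{n\in \Z^+}$ satisfies the hypothesis of Proposition~\ref{22} with respect to $\tilde{\mathcal S}$, i.e.\ that there exist constants $K'>0$ and $a'>0$ for which the two inequalities in~\eqref{bg1sne} hold (with $(A_n)$ replaced by $(B_n)$ and $\|\cdot\|_n$ replaced by $\|\cdot\|_{2^n}$). This is exactly the content of~\eqref{te} in the proof of Proposition~\ref{WW}, which shows that the bound~\eqref{bg1sn} assumed for $(A_n,\mathcal S)$ propagates to $(B_n,\tilde{\mathcal S})$ with constants $K$ and $a\log 2$. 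So the hypotheses of Proposition~\ref{22} are in force.

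Next, apply Proposition~\ref{22} to $(\mathbb B,\tilde{\mathcal S})$ to obtain $1\le r\le d$ and real numbers $a_1\le b_1<\ldots <a_r\le b_r$ (with $a_1>0$ since $\Sigma_{ED,\mathbb B,\tilde{\mathcal S}}\subset(0,\infty)$ by Definition~\ref{def-SED}) such that~\eqref{EDB} holds. This gives the first assertion of the corollary.

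For the second assertion, I would invoke Corollary~\ref{KOR}, which says
\[
\tau\in\Sigma_{PD,\mathbb A,\mathcal S}\iff 2^\tau\in\Sigma_{ED,\mathbb B,\tilde{\mathcal S}}.
\]
Combining this equivalence with~\eqref{EDB}, one has $\tau\in\Sigma_{PD,\mathbb A,\mathcal S}$ iff there exists $i\in\{1,\ldots,r\}$ with $a_i\le 2^\tau\le b_i$, which (using monotonicity of $\log$ and positivity of $a_i$) is equivalent to $\tau\in[\log a_i/\log 2,\log b_i/\log 2]$. Taking the union over $i$ yields~\eqref{PDA}. The ordering $\log a_1/\log 2\le\log b_1/\log 2<\ldots<\log a_r/\log 2\le\log b_r/\log 2$ is preserved by the same monotonicity argument.

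There is no real obstacle: the corollary is an essentially bookkeeping consequence of the spectral theorem for exponential dichotomies (Proposition~\ref{22}) together with the reparametrization identity of Corollary~\ref{KOR}. The only minor thing to check carefully is that the hypothesis~\eqref{bg1sne} for $\mathbb B$ with respect to $\tilde{\mathcal S}$ is a direct consequence of the assumed bound~\eqref{bg1sn} on $\mathbb A$ with respect to $\mathcal S$, which was already computed inside the proof of Proposition~\ref{WW}.
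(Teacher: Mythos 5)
Your proof is correct and follows exactly the route taken in the paper: verify via \eqref{te} that $(\mathbb B,\tilde{\mathcal S})$ satisfies the hypothesis~\eqref{bg1sne} of Proposition~\ref{22}, apply Proposition~\ref{22} to obtain~\eqref{EDB}, then use the equivalence from Corollary~\ref{KOR} together with the monotonicity of $\log$ to deduce~\eqref{PDA}. You have merely spelled out the bookkeeping that the paper leaves implicit.
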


\begin{proof}
By~\eqref{bg1sn} we have that~\eqref{te} holds
for $m\ge n$ and $x\in \R^d$.  The desired conclusion now follows readily from Proposition~\ref{22} and Corollary~\ref{KOR}.
\end{proof}

\section{Smooth linearization under exponential dichotomy} \label{sec: lin exp dich}
We emphasize that  $C^0$ conjugacy  in general  does not preserve important dynamical properties
such as  characteristic directions and  smoothness of invariant manifolds. Preserving these properties needs at least $C^1$ conjugacy. In 1970s, an effort was made by Belitskii in~\cite{Bel73} to establish a $C^1$ linearization
of $C^2$ hyperbolic diffeomorphisms on $\mathbb{R}^d$ under certain second order non-resonance conditions. This result requires much weaker conditions on smoothness and non-resonance than the Sternberg's theorem in $C^1$ case. Moreover, Hartman's example given in~\cite{HartPAMS60} shows that Belitskii's non-resonance conditions cannot be avoided.

Since 1990s, two independent research directions emerged. In one direction, the  goal was to extend Belitskii's result to the infinite-dimensional setting, while the other direction was concerned with   differentiable   (at the fixed point $0$) linearization in the absence of non-resonance conditions. 
Concerning the first direction of research, we mention the work \cite{ZZJ}, which established  $C^1$ linearization result on  Banach spaces under appropriate  spectral gap and  spectral band conditions for the linear part. This result was further extended to the nonautonomous setting in \cite{DZZ}. In the second direction, van Strien~\cite{Stri-JDE90} claimed  the linearization result for  $C^2$ diffeomorphisms on $\R^d$ without any non-resonance conditions. The conclusion was that there exists  a conjugacy  which is simultaneously differentiable at 0 and H\"older
continuous near 0 (weaker than $C^1$ smoothness). However,  van Strien's proof was found to be incorrect (see~\cite{Ray-JDE98}). In~\cite{DZZ20}, the authors gave a correct proof of 
van Strien's result by using  different methods.

In what follows, we present two linearization results for nonautonomous dynamics under the assumption that the linear part admits an exponential dichotomy with respect to a sequence of norms. The first result (see Theorem~\ref{t0}) gives conditions under which the conjugacies are $C^1$, while the second result (see Theorem~\ref{t1}) is concerned with the case when the conjugacies are differentiable at $0$ and locally H\"older
continuous. In contrast to the existing result in the literature, the novelty is that Theorems~\ref{t0} and~\ref{t1} are concerned with one-sided dynamics (which will make them applicable to the problem of linearization under polynomial behaviour).

\subsection{$C^1$ linearization under exponential dichotomy}

\begin{theorem}\label{t0}
Let $\mathbb B=(B_n)_{n\in \Z^+}$ be a sequence of invertible operators on $\R^d$ that admits a strong exponential dichotomy with respect to a sequence of norms $\mathcal S=\{ \|\cdot \|_n; \ n\in \Z^+\}$.
Suppose that $\Sigma_{ED, \mathbb B, \mathcal S}$  has the form \eqref{EDB}, where
\begin{equation}\label{sp1}
a_1\le b_1 <\ldots a_k\le b_k <1 <a_{k+1} \le b_{k+1}<\ldots a_r \le b_r
\end{equation}
and
\begin{equation}\label{sp2}
\begin{cases}
a_{k+1}/b_k >\max \{b_r, a_1^{-1}\}, \\
b_i/a_i<b_k^{-1}, \ \forall i=1, \ldots, k, \ b_j/a_j < a_{k+1}, \forall j=k+1, \ldots, r.
\end{cases}
\end{equation}
Moreover, let $f_n\colon \R^d \to \R^d$, $n\in \Z^+$, be a sequence of $C^1$ maps such that
\begin{equation}\label{fzero}
f_n(0)=0 \quad \text{and} \quad Df_n(0)=0,
\end{equation}
and
\begin{equation}\label{fm}
\|Df_n(x)v\|_{n+1} \le \eta \|v\|_n, \quad \|(Df_n(x)-Df_n(y))v\|_{n+1} \le L\|x-y\|_n \cdot \|v\|_n,
\end{equation}
with constants $L, \eta>0$ for $x, y, v\in \R^d$. Then, provided that $\eta$ is sufficiently small,  there exists a sequence of $C^1$-diffeomorphisms $h_n\colon \R^d\to \R^d$, $n\in \Z^+$,  such that
\begin{itemize}
\item for $n\in \Z^+$,
$
h_{n+1} \circ (B_n+f_n)=B_n\circ h_n;
$
\item there exist $M, \rho>0$ such that
\begin{equation*}
\|D h_n(x)v\|_n\le M\|v\|_n, \quad \|Dh_n^{-1}(x)v\|_n \le M\|v\|_n, \quad \forall n\in \Z^+,
\end{equation*}
for all $v\in \R^d$ and $x,y\in \mathbb{R}^d$ satisfying $\|x\|_n\le \rho$.
\end{itemize}
\end{theorem}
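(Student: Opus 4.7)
Theorem~\ref{t0} concerns one-sided dynamics, whereas the available smooth linearization results \cite{DZZ} are stated for two-sided sequences. My strategy is therefore to extend $(B_n)_{n \in \Z^+}$, $(\|\cdot\|_n)_{n \in \Z^+}$ and $(f_n)_{n \in \Z^+}$ backwards in time so that the resulting two-sided data satisfy the hypotheses of a two-sided linearization theorem, and then to invoke a strengthened form of \cite[Theorem 1]{DZZ} to be established in Appendix~A, which provides uniform bounds on the derivative of the conjugacy as required by the second bullet of the conclusion.

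The key construction is a backward extension of the linear cocycle that preserves the spectral structure. Using the strong exponential dichotomy and the spectral decomposition \eqref{EDB} of $\Sigma_{ED, \mathbb B, \mathcal S}$, I would first produce an invariant splitting $\R^d = V_1 \oplus \cdots \oplus V_r$ at $n = 0$ such that the forward cocycle decomposes along it and its restriction to each $V_i$ has dichotomy spectrum exactly $[a_i, b_i]$. This splitting can be obtained by intersecting the stable and unstable subspaces at $n = 0$ associated with the dichotomies for the rescaled sequences $(\gamma_i^{-1} B_n)_{n \in \Z^+}$, where $\gamma_i$ is chosen in each spectral gap $(b_i, a_{i+1})$; the corresponding projections commute and assemble into the desired direct-sum decomposition. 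Then, for $n < 0$, I would define $\tilde B_n$ to act as multiplication by a fixed constant $\mu_i \in (a_i, b_i)$ on each $V_i$, set $\|\cdot\|_n := \|\cdot\|_0$ and $\tilde P_n := P_0$, and put $\tilde f_n \equiv 0$. Conditions \eqref{fzero}--\eqref{fm} for the extended nonlinearity hold trivially at negative times.

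A direct computation using Definition~\ref{def-SED} shows that $(\tilde B_n)_{n \in \Z}$ admits a strong exponential dichotomy with respect to the extended norms and that $\Sigma_{ED, \tilde{\mathbb B}, \tilde{\mathcal S}} = \bigcup_{i=1}^{r} [a_i, b_i]$: the backward half contributes only the discrete set $\{\mu_1, \ldots, \mu_r\} \subset \bigcup_i [a_i, b_i]$, so no new spectrum appears and the conditions \eqref{sp1}--\eqref{sp2} persist for the extended system. Applying the strengthened two-sided $C^1$ linearization theorem from Appendix~A to $(\tilde B_n + \tilde f_n)_{n \in \Z}$ then yields $C^1$ diffeomorphisms $\tilde h_n : \R^d \to \R^d$ conjugating $\tilde B_n + \tilde f_n$ to $\tilde B_n$ with uniform derivative bounds; restricting via $h_n := \tilde h_n$ for $n \geq 0$ produces the desired conjugacy.

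I expect the main obstacle to lie in producing the invariant splitting $V_1 \oplus \cdots \oplus V_r$ and in verifying that the backward extension preserves the dichotomy spectrum in the nonuniform, one-sided, weighted-norm framework: while Proposition~\ref{22} determines the structural form of the spectrum, extracting commuting spectral projections compatible with $\mathcal S$ and producing block dichotomies whose constants remain under control requires careful bookkeeping. A subsidiary difficulty is ensuring that the strengthened two-sided theorem in Appendix~A yields control on a ball of uniform radius $\rho$ at every $n$, not merely at finitely many times, as required by the form of the conclusion.
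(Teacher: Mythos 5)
Your overall route is the same as the paper's: set the nonlinearity to zero for negative times, extend the linear sequence backward in time by a constant block-diagonal matrix whose blocks sit in the spectral intervals, argue that the extended two-sided sequence has the same dichotomy spectrum, and apply the strengthened two-sided $C^1$ linearization theorem from Appendix~A. That part of the plan is sound and is precisely what the paper does.

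The genuine gap is in the step where you ``produce an invariant splitting $\R^d=V_1\oplus\cdots\oplus V_r$ at $n=0$ such that the forward cocycle decomposes along it and its restriction to each $V_i$ has dichotomy spectrum exactly $[a_i,b_i]$'' by ``intersecting the stable and unstable subspaces at $n=0$,'' with the claim that ``the corresponding projections commute.'' For a one-sided cocycle on $\Z^+$ this fails: the dichotomy on the half-line determines only the stable subspaces $\Ima P_0$ intrinsically (Lemma~\ref{l1}), not the unstable complements, so the rescaled dichotomies at different gap points $\gamma_i$ give you a nested filtration $\{0\}=S_1\subsetneq S_2\subsetneq\cdots\subsetneq S_{r+1}=\R^d$ of invariant stable subspaces, but no canonical, let alone commuting, complementary projections, and no cocycle-invariant direct-sum decomposition. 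In particular the phrase ``dichotomy spectrum of the restriction to $V_i$'' is not even well-defined when $V_i$ is not invariant, so the verification that ``no new spectrum appears'' cannot be carried out as you describe. The paper's proof avoids this by using only the flag $S_i$ and choosing arbitrary (non-invariant) algebraic complements $V_i$ with $S_{i+1}=S_i\oplus V_i$; the backward constant matrix $B$ is then defined to act as $a_i\Id$ on each piece, and the key step you leave implicit is the invocation of the one-sided-to-two-sided splicing result (in the paper, \cite[Theorem 2.3]{BDV0}), which requires precisely the transversality $\Ima \tilde P_0^i\oplus\Ker\tilde P_0^{-,i}=S_i\oplus(V_i\oplus\cdots\oplus V_r)=\R^d$ at time $0$. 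If you replace the invariant-splitting claim by this flag-plus-arbitrary-complements construction and cite the half-line splicing lemma explicitly, the rest of your plan goes through and coincides with the paper's argument.
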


\begin{remark}
Theorem~\ref{t0} is a consequence of Theorem \ref{Ap} whose proof will be given in Appendix A. In fact, Theorem \ref{Ap} deals with the case of $n\in \Z$, while Theorem \ref{t0} deals with the case of $n\in \Z^+$. Moreover, observe that $1\notin \Sigma_{ED, \mathbb B, \mathcal S}$ since $\mathbb B$ admits an exponential dichotomy with respect to the sequence of norms $\mathcal S$.
\end{remark}

\begin{proof}
Set $f_n:=0$ for all $n<0$. Then, \eqref{fm} holds for every $x, y\in \R^d$ and $n\in \Z$. Choose $c_1, \ldots, c_{r+1} \in \R$ such that
\[
0<c_1<a_1\le b_1<c_2 <a_2\le b_2 <\ldots a_r \le b_r <c_{r+1}.
\]
For each $i\in \{1, 2, \ldots, r+1\}$, set
\[
S_i:=\bigg \{ x\in \R^d: \sup_{n\ge 0} \frac{1}{c_i^n} \| \mathcal B(n,0)x\|<+\infty \bigg \}.
\]
Then (see the proof of Proposition~\ref{22}), for each $i\in \{1,2 \ldots, r+1\}$, the sequence $(\frac{1}{c_i}B_n)_{n\in \Z^+}$ admits a nonuniform exponential dichotomy with respect to projections $\tilde P_n^i$ for $n\in \Z^+$, where $\Ima \tilde P_0^i=S_i$. Moreover, the following holds true:
\begin{itemize}
\item $S_i$ does not depend on the particular choice of $c_i$;
\item $S_1=\{0\} \subsetneq S_2\ldots \subsetneq S_r \subsetneq S_{r+1}=\R^d$.
\end{itemize}
Next, choose subspaces $V_2, \ldots, V_{r}$ of $\R^d$ such that
\[
\R^d=S_r\oplus V_r, \ S_r=S_{r-1} \oplus V_{r-1}, \ldots, S_3=S_2\oplus V_2.
\]
Hence,
\[
\R^d=V_r\oplus V_{r-1} \oplus \ldots \oplus V_2 \oplus S_2.
\]
Define an operator $B$ on $\R^d$ by
\begin{align}\label{BZ-}
B\rvert_{S_2}=a_1\Id_{S_2}, \ B\rvert_{V_2}=a_2\Id_{V_2}, \ldots, B\rvert_{V_r}=a_r\Id_{V_r},
\end{align}
where $\Id_V$ denotes the identity map on a subspace $V\subset \R^d$. Moreover, 
set $B_n:=B$ and $\|\cdot \|_n=\| \cdot \|$ for $n<0$.

Set $\mathcal S':=\{ \|\cdot \|_n; \ n\in \Z\}$, $\mathbb B':=(B_n)_{n\in \Z}$, and let $\Sigma_{ED, \mathbb B', \mathcal S'}$ be defined as  in the  Definition \ref{def-SED}.
We now claim that $\Sigma_{ED, \mathbb B, \mathcal S}=\Sigma_{ED, \mathbb B', \mathcal S'}$. As mentioned above, $S_i$ does not depend on $c_i$. Thus, for any
$$
\tilde c_1\in (0, a_1), ~~\tilde c_i\in (b_{i-1}, a_{i}), ~~ i\in \{2,...,r\}, ~~  \tilde c_{r+1}\in (b_r, \infty),
$$
we have that the sequence $(\frac {1}{\tilde c_i}B_n)_{n\in \Z^+}$ also admits a strong exponential dichotomy with respect to the sequence of norms $\mathcal S$ and projections $\tilde P_n^i$ for all $n\in \Z^+$ such that
$\Ima \tilde P_0^i=S_{i}$. Moreover, it is easy to see from \eqref{BZ-} that the sequence $(\frac {1}{\tilde c_i}B_n)_{n\in \Z^-}$ admits a strong exponential dichotomy on $\Z^-$ with respect to the sequence of norms $\| \cdot \|_n$, $n\in \Z^-$ and  projections $\tilde P_n^i$, $n\in \Z^-$ such that $\Ker \tilde P_0^-=V_{i} \oplus \ldots \oplus V_r$. Since,
\[
\R^d=S_{i}\oplus V_{i} \oplus \ldots \oplus V_r,
\]
we conclude from the proof of~\cite[Theorem 2.3]{BDV0} that $(\frac {1}{\tilde c_i}B_n)_{n\in \Z}$ admits a strong exponential dichotomy on $\Z$ with respect to the sequence of norms $\mathcal S'$.
This implies that $(0, \infty)\setminus \Sigma_{ED, \mathbb B, \mathcal S} \subset (0, \infty)\setminus \Sigma_{ED, \mathbb B', \mathcal S'}$, i.e. $\Sigma_{ED, \mathbb B', \mathcal S'}\subset \Sigma_{ED, \mathbb B, \mathcal S}$. On the other hand, we trivially have that $\Sigma_{ED, \mathbb B, \mathcal S}\subset \Sigma_{ED, \mathbb B', \mathcal S'}$, proving that the two spectra coincide, i.e.,
\[
\Sigma_{ED, \mathbb B', \mathcal S'}
=\Sigma_{ED, \mathbb B, \mathcal S}
=\bigcup_{i=1}^r [a_i, b_i].
\]
The conclusion of the theorem now follows readily from Theorem~\ref{Ap} given in the Appendix A (which is a corollary of \cite[Theorem 2]{DZZ}).
\end{proof}

\subsection{Differentiable and H\"older linearization under exponential dichotomy}
Using similar arguments to the proof of Theorem \ref{t0}, we can also obtain the following result, where we remove the spectral gap condition, i.e., the first inequality of (\ref{sp2}), and obtain the simultaneously differentiable (at 0) and H\"older linearization.
\begin{theorem}\label{t1}
Suppose that $\mathbb B=(B_n)_{n\in \Z^+}$, $\mathcal S=\{\|\cdot \|_n; \ n\in \Z^+\}$ and $\Sigma_{ED, \mathbb B, \mathcal S}$  are as in the statement  of~Theorem {\rm \ref{t0}} and that \eqref{sp1}
and
\begin{equation}\label{sp3}
b_i/a_i<b_k^{-1}, \ \forall i=1, \ldots, k, \ b_j/a_j < a_{k+1}, \forall j=k+1, \ldots, r,
\end{equation}
hold. Let $f_n\colon \R^d \to \R^d$, $n\in \Z^+$, be a sequence of $C^1$ maps such that \eqref{fzero} and \eqref{fm} hold and let $\alpha_1\in \mathbb{R}$ be an arbitrary constant satisfying
\begin{equation}\label{alpha1}
    0<\alpha_1<\min\Big\{\frac{\ln a_{k+1}-\ln b_k}{\ln b_r},\frac{\ln a_{k+1}-\ln b_{k}}{\ln a_1^{-1}}\Big\}.
\end{equation}
Then, provided that $\eta$ is sufficiently small, there exists a sequence of homeomorphisms $h_n\colon \R^d\to \R^d$, $n\in \Z^+$, such that
\begin{itemize}
\item for $n\in \Z^+$,
$
h_{n+1} \circ (B_n+f_n)=B_n\circ h_n;
$
\item there exist constants $\tilde L, \varrho,\rho>0$ such that
\begin{align*}
\|h_n(x)-x\|_n=o(\|x\|_n^{1+\varrho}),\qquad
\|h_n^{-1}(x)-x\|_n=o(\|x\|_n^{1+\varrho})
\end{align*}
as $\|x\|_n\to 0$ and
\begin{align*}
\|h_n(x)-h_n(y)\|_n\!\le\! \tilde L\|x-y\|_n^{\alpha_1},~~~
\|h_n^{-1}(x)-h_n^{-1}(y)\|_n\!\le\! \tilde L \|x-y\|_n^{\alpha_1}
\end{align*}
for $x,y\in \mathbb{R}^d$ satisfying $\|x\|_n,\|y\|_n\le \rho$.
\end{itemize}
\end{theorem}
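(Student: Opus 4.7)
The proof should proceed by reduction to a two-sided statement in exactly the same spirit as the proof of Theorem~\ref{t0}. The plan is to embed the one-sided system $\mathbb{B}=(B_n)_{n\in \Z^+}$ into a two-sided system $\mathbb{B}'=(B_n)_{n\in \Z}$ whose dichotomy spectrum coincides with that of $\mathbb{B}$, extend the nonlinearities trivially, and then invoke a two-sided differentiable/H\"older linearization theorem (the analogue of Theorem~\ref{Ap}, obtained from \cite{DZZ20} instead of \cite{DZZ}).

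First, I would set $f_n:=0$ for $n<0$, so that~\eqref{fzero} and~\eqref{fm} automatically persist on $\Z$. Then, following verbatim the construction in the proof of Theorem~\ref{t0}, I would introduce the filtration $S_1\subsetneq S_2\subsetneq \cdots \subsetneq S_{r+1}=\R^d$ of initial conditions with prescribed one-sided growth rates, pick complements $V_2,\ldots,V_r$ so that $\R^d=S_2\oplus V_2\oplus \cdots\oplus V_r$, and define the fiber operator $B$ via~\eqref{BZ-}. Setting $B_n:=B$ and $\|\cdot\|_n:=\|\cdot\|$ for $n<0$ yields the two-sided datum $\mathbb{B}'$ with norm family $\mathcal{S}'$.

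The next step is to verify the spectral identity $\Sigma_{ED,\mathbb{B}',\mathcal{S}'}=\Sigma_{ED,\mathbb{B},\mathcal{S}}$; this is precisely the computation carried out in the proof of Theorem~\ref{t0}, using that for $\tilde c_i$ in each gap of $\Sigma_{ED,\mathbb{B},\mathcal{S}}$ the rescaled one-sided cocycle admits a strong exponential dichotomy on $\Z^+$ with stable space $S_i$, while on $\Z^-$ the constant cocycle $B$ admits a strong exponential dichotomy with unstable space $V_i\oplus\cdots\oplus V_r$, so that the gluing result of~\cite[Theorem 2.3]{BDV0} (via the decomposition $\R^d=S_i\oplus V_i\oplus\cdots\oplus V_r$) produces a two-sided dichotomy. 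Hence the spectral assumption~\eqref{sp1} and the band condition~\eqref{sp3}, as well as the admissibility range~\eqref{alpha1} for $\alpha_1$, transfer unchanged from the one-sided to the two-sided setting.

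At this point the hypotheses of the two-sided differentiable-at-$0$ plus locally H\"older linearization theorem are in force; this is the exact analogue of Theorem~\ref{Ap} but derived from~\cite{DZZ20} rather than \cite{DZZ}, and it yields a sequence of homeomorphisms $h_n\colon \R^d\to \R^d$ ($n\in\Z$) conjugating $B_n+f_n$ with $B_n$, which are differentiable at $0$ with a higher-order error $o(\|x\|_n^{1+\varrho})$ and locally $\alpha_1$-H\"older with the explicit exponent bound~\eqref{alpha1}. Restricting this family to $n\in \Z^+$ gives the conjugacies claimed in the theorem, together with the required quantitative estimates uniformly in $n$. The main obstacle, as in Theorem~\ref{t0}, is the bookkeeping needed to ensure that the two-sided extension does not change the dichotomy spectrum and, crucially, that the bound~\eqref{alpha1} on the H\"older exponent computed from the one-sided spectrum agrees with the bound dictated by the two-sided version of \cite{DZZ20}; this is guaranteed by the spectral identity established above together with the fact that $B$ was chosen with eigenvalues $a_1,\ldots,a_r$ lying precisely at the left endpoints of the spectral bands, so that the two-sided gaps collapse to the one-sided ones.
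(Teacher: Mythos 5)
Your proposal is correct and follows essentially the same route as the paper: the paper's proof extends $(B_n)$ and $(f_n)$ to two-sided sequences exactly as in Theorem~\ref{t0} (so the spectral identity $\Sigma_{ED,\mathbb B',\mathcal S'}=\Sigma_{ED,\mathbb B,\mathcal S}$ carries over) and then applies the differentiable-at-$0$/H\"older linearization result \cite[Lemma 4]{DZZ20} to the operators $B^*$ and $F$ on $Y_\infty$ constructed in Appendix~A, which is precisely the two-sided analogue of Theorem~\ref{Ap} that you invoke.
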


\begin{proof}
The proof can be obtained in a similar manner to the proof of Theorem~\ref{t0}. More precisely, we extend sequences $\mathbb B=(B_n)_{n\in \Z^+}$ and $(f_n)_{n\in \Z^+}$ to two-sided sequences $(B_n)_{n\in \Z}$ and $(f_n)_{n\in \Z}$
 exactly as in the proof of Theorem~\ref{t0}. Then, it remains to apply~\cite[Lemma 4]{DZZ20} to $B^*$ and $F$ introduced in Appendix A (see the proof of Theorem~\ref{Ap}).
\end{proof}

\section{Smooth linearization under nonuniform polynomial dichotomy}
\label{sec: lin pol}

In this section we present  main theorems concerning the linearization under polynomial behaviour. 

\subsection{$C^1$ linearization under polynomial dichotomy}
\begin{theorem}\label{theo: main linearization discrete}
Let $\mathbb A=(A_n)_{n\in \N}$ be a sequence of invertible operators on $\R^d$ that admits a nonuniform strong polynomial dichotomy and let $\mathcal S=\{{\|\cdot\|_n}; n\in \N\}$ be the sequence of norms given by Proposition~{\rm \ref{LN}}.
Suppose that $\Sigma_{PD, \mathbb A, \mathcal S}$ has the form~\eqref{PDA}, where the numbers $a_i$ and $b_i$ satisfy~\eqref{sp1}-\eqref{sp2}.
Moreover, let $g_n\colon \R^d \to \R^d$, $n\in \N$, be a sequence of $C^1$ maps  with the following properties:
\begin{itemize}
\item for $n\in \N$,
\begin{equation}\label{zero}
g_n(0)=0 \quad \text{and} \quad Dg_n(0)=0;
\end{equation}
\item there exists $c>0$ such that
\begin{equation}\label{non1}
\|Dg_n(x)\| \le \frac{c}{(n+1)^{1+2\varepsilon}}, \qquad \forall x\in \R^d;
\end{equation}
\item there exists $L>0$ such that
\begin{equation}\label{non2}
\| Dg_n(x)-Dg_n(y)\| \le \frac{L}{(n+1)^{1+2\varepsilon}}\|x-y\|, \quad \forall x,y\in \R^d,
\end{equation}
where $\varepsilon \ge 0$ is as in Definition~{\rm \ref{def-SPD}}.
\end{itemize}
Then, provided that $c$ is sufficiently small, there exists a sequence of $C^1$- diffeomorphisms $\psi_n:\mathbb{R}^d\to \mathbb{R}^d$, ${n\in \N}$, such that
\begin{equation}\label{E}
\psi_{n+1} \circ (A_n+g_n)=A_n\circ \psi_n, \quad n\in \N.
\end{equation}
Moreover, there exist $\tilde M, \tilde{\rho}>0$ such that 
\begin{equation}\label{EE}
\|D\psi_n(x)\|\le \tilde M n^{2\varepsilon} \quad \text{and} \quad \|D\psi_n^{-1}(x)\| \le \tilde M n^{2\varepsilon},
\end{equation}
for $n\in \N$ and $x\in\mathbb{R}^d$ satisfying $\|x\|\le \frac{\tilde \rho}{n^{2\varepsilon}}$.
\end{theorem}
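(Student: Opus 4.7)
The plan is to reduce to Theorem~\ref{t0} via the reparametrization $B_n := \cA(2^{n+1}, 2^n)$ from~\eqref{BN}. By Proposition~\ref{LN}, the norms $\mathcal S$ satisfy $\|x\| \le \|x\|_n \le Cn^{2\varepsilon}\|x\|$ (the $\delta$ in~\eqref{ln1} equals $2\varepsilon$ by inspection of the proof). Proposition~\ref{WW} then gives a strong exponential dichotomy of $\mathbb B = (B_n)_{n \in \Z^+}$ with respect to $\tilde{\mathcal S} := \{\|\cdot\|_{2^n}; n \in \Z^+\}$, and Corollary~\ref{KOR1} converts~\eqref{PDA}--\eqref{sp2} into exactly the spectral structure required by Theorem~\ref{t0} for $\Sigma_{ED, \mathbb B, \tilde{\mathcal S}}$.

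Let $\Phi_k := A_k + g_k$ and $\Phi(m,n) := \Phi_{m-1}\circ\cdots\circ\Phi_n$ for $m > n$, and set $F_n := \Phi(2^{n+1}, 2^n) - B_n$. The crux of the argument is to verify that $(F_n)_{n\in \Z^+}$ satisfies the hypotheses of Theorem~\ref{t0} with respect to $\tilde{\mathcal S}$, with the parameter $\eta$ as small as needed. The conditions $F_n(0)=0$ and $DF_n(0)=0$ are immediate from~\eqref{zero}. For the remaining estimates I would expand $DF_n(x)$ as the telescoping sum
\[
DF_n(x) = \sum_{j=2^n}^{2^{n+1}-1} \cA(2^{n+1}, j+1)\, Dg_j(y_j)\, D\Phi(j, 2^n)(x), \qquad y_j := \Phi(j, 2^n)(x),
\]
and bound it in $\|\cdot\|_{2^{n+1}}$ using the growth bound~\eqref{bg1sn}, the norm comparability~\eqref{ln1}, and~\eqref{non1}. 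The key calculation
\[
\|Dg_j(y_j)w\|_{j+1} \le C(j+1)^{2\varepsilon}\cdot\frac{c}{(j+1)^{1+2\varepsilon}}\|w\| \le \frac{Cc}{j+1}\|w\|_j
\]
combined with $\|\cA(2^{n+1}, j+1)\|_{j+1\to 2^{n+1}} \lesssim (2^{n+1}/(j+1))^a \le 2^a$ and (after a short bootstrap) $\|D\Phi(j,2^n)(x)\|_{2^n\to j} \lesssim (j/2^n)^a \le 2^a$ shows that each summand contributes $\lesssim c/(j+1)$. The sum over the dyadic block $[2^n, 2^{n+1})$ is therefore bounded by $Cc\log 2$, uniformly in $n$. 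This is exactly the point at which the exponent $1+2\varepsilon$ in~\eqref{non1} is calibrated: the $n^{2\varepsilon}$ cost of switching between $\|\cdot\|$ and $\|\cdot\|_n$ is absorbed, and what remains is a harmonic tail over a dyadic block. The Lipschitz bound on $DF_n$ is obtained from an analogous telescoping, using~\eqref{non2} to pick up a factor of $\|x-y\|_{2^n}$.

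Theorem~\ref{t0} now supplies $C^1$-diffeomorphisms $h_n$, $n\in \Z^+$, with $h_{n+1}\circ(B_n+F_n) = B_n\circ h_n$ and $\|Dh_n(x)v\|_{2^n}, \|Dh_n^{-1}(x)v\|_{2^n} \le M\|v\|_{2^n}$ for $\|x\|_{2^n}\le\rho$. For each $k\in\N$ pick $n=n(k)\in\Z^+$ with $2^n\le k<2^{n+1}$ and set
\[
\psi_k := \cA(k, 2^n) \circ h_n \circ \Phi(k, 2^n)^{-1}.
\]
Consistency at $k = 2^{n+1}$ follows from the conjugacy satisfied by $h_n, h_{n+1}$, and the one-step identities $\cA(k+1, 2^n) = A_k\cA(k, 2^n)$ and $\Phi(k+1, 2^n) = \Phi_k\circ\Phi(k, 2^n)$ give~\eqref{E}. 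Differentiating the definition and estimating the three factors in $\|\cdot\|_{2^n}$ via~\eqref{bg1sn}, Theorem~\ref{t0}, and the same telescoping bound as above yields $\|D\psi_k(x)v\|_k, \|D\psi_k^{-1}(x)v\|_k \le \tilde M\|v\|_k$ on a ball $\|x\|_k \le \rho'$; conversion via~\eqref{ln1} produces both the factor $k^{2\varepsilon}$ in the operator-norm bound and the shrunken neighborhood $\|x\|\le\tilde\rho/k^{2\varepsilon}$ in~\eqref{EE}. The main obstacle is the uniform-in-$n$ estimate of the preceding paragraph; once the exponent $1+2\varepsilon$ has been tracked through the telescoping the rest is bookkeeping.
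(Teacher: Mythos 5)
Your proposal follows the paper's argument step for step: the same dyadic reparametrization $B_n = \cA(2^{n+1},2^n)$, the same definition $F_n = \Phi(2^{n+1},2^n) - B_n$ (the paper's $f_n$), the same Gronwall bootstrap to control $D\Phi(j,2^n)$ within a block (the paper gets exponent $a + cCK$, which is still $O(1)$ on a dyadic block), the same calibration whereby the $(n+1)^{-(1+2\varepsilon)}$ decay in \eqref{non1}--\eqref{non2} absorbs the $n^{2\varepsilon}$ norm-comparison cost and leaves a summable $1/(j+1)$ tail, and the same interpolated definition $\psi_k = \cA(k,2^n)\circ h_n\circ\Phi(k,2^n)^{-1}$ with the endpoint check at $k=2^{n+1}$. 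This is essentially the paper's proof, so no further comparison is needed.
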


\begin{proof}
We begin by recalling that~\eqref{ln1} holds with $\delta:=2\varepsilon \ge 0$  (see the proof of Proposition~\ref{LN}).
Setting $G_n:=A_n+g_n$ for $n\in \N$ and
\begin{align*}
\mathcal G(m,n):=
\begin{cases}
G_{m-1} \circ \ldots \circ G_n, & m>n;
\\
\Id, & m=n,
\end{cases}
\end{align*}
we easily verify that
\begin{equation}\label{G}
\mathcal G(m,n)(x)=\cA(m,n)x+\sum_{j=n}^{m-1}\cA(m, j+1)g_j ( \mathcal G(j,n)(x))
\end{equation}
for $m\ge n$ and $x\in \R^d$, and therefore
\begin{equation}\label{DG}
D\mathcal G(m,n)(x)=\cA(m,n)+\sum_{j=n}^{m-1} \cA(m,j+1) Dg_j(\mathcal G(j,n)(x)) D\mathcal G(j,n)(x).
\end{equation}
Observe that~\eqref{ln1} and~\eqref{non1} imply that 
\begin{equation}\label{derofg}
\|Dg_n(x)v\|_{n+1} \le C(n+1)^{2\varepsilon}\|Dg_n(x)v\|\le \frac{cC}{n+1}\|v\|\le \frac{cC}{n+1}\|v\|_n
\end{equation}
and therefore
\[
\begin{split}
&\|D\mathcal G(m,n)(x) v\|_m  \\
&\le K\bigg (\frac m n \bigg )^a \|v\|_n+K \sum_{j=n}^{m-1} \bigg (\frac{m}{j+1} \bigg )^a\| Dg_j(\mathcal G(j,n)(x)) D\mathcal G(j,n)(x)v\|_{j+1} \\
&\le K\bigg (\frac m n \bigg )^a \|v\|_n+K \sum_{j=n}^{m-1} \bigg (\frac{m}{j+1} \bigg )^a \frac{cC}{j+1} \|D\mathcal G(j,n)(x)v\|_j
\end{split}
\]
by \eqref{bg1sn},
which is equivalent to
\[
\bigg (\frac n m\bigg )^a \|D\mathcal G(m,n)(x) v\|_m \le K\|v\|_n+K\sum_{j=n}^{m-1} \frac{cC}{j+1}\bigg (\frac n j\bigg )^a\|(D\mathcal G(j,n))(x)v\|_j
\]
for $m\ge n$ and $x, v\in \R^d$.  We need the following discrete-version of Gronwall's lemma (see e.g. \cite[Lemma 4.32]{E-book}).
\begin{lemma}\label{lm-G}
Let $n\in \N$ and $\alpha >0$. Suppose that $(u_m)_{m\ge n}$ and $(z_m)_{m\ge n}$ are two nonnegative sequences satisfying 
$
u_m \le K\{u_0 + \sum_{j=n}^{m-1} z_ju_j\}
$
for $m \ge n$.
Then
\begin{align*}
u_m \le Ku_0\,e^{\sum_{j=n}^{m-1}(Kz_j)}, \quad \forall m \ge n.
\end{align*}
\end{lemma}
\noindent By the above lemma, we find that
\[
\bigg (\frac n m\bigg )^a\| D\mathcal G(m,n)(x)v\|_m\le K\|v\|_n e^{\sum_{j=n}^{m-1}\frac{cC K}{j+1}}.
\]
Since ${\sum_{j=n}^{m-1}\frac{1}{j+1}}\le \log m-\log n$,
we obtain that
\begin{equation}\label{204}
\| D\mathcal G(m,n)(x)v\|_m \le K \bigg (\frac mn\bigg )^{a+cC K}\|v\|_n,
\end{equation}
for $m\ge n\ge 1$ and $x, v\in \R^d$.

Furthermore, we see from (\ref{DG}) that
\begin{align}\label{zz}
&D\mathcal G(m,n)(x)-D\mathcal G(m, n)(y) 
\nonumber\\
&=\phantom{=}\sum_{j=n}^{m-1} \cA(m,j+1)Dg_j(\mathcal G(j,n)(x))D\mathcal G(j,n)(x) 
\nonumber\\
&\phantom{=}-\sum_{j=n}^{m-1} \cA(m,j+1)Dg_j(\mathcal G(j,n)(y))D\mathcal G(j,n)(y) 
\nonumber\\
&=\phantom{=}\sum_{j=n}^{m-1}\cA(m,j+1)Dg_j(\mathcal G(j,n)(x))(D\mathcal G(j,n)(x)-D\mathcal G(j,n)(y))
\nonumber\\
&\phantom{=}-\sum_{j=n}^{m-1}\cA(m,j+1)(Dg_j(\mathcal G(j,n)(y))-Dg_j(\mathcal G(j,n)(x)))D\mathcal G(j,n)(y).
\end{align}
In addition, \eqref{bg1sn}, \eqref{ln1} and~\eqref{non1} imply that
\begin{align}\label{zz1}
&\bigg\|\sum_{j=n}^{m-1}\cA(m,j+1)Dg_j(\mathcal G(j,n)(x))(D\mathcal G(j,n)(x)-D\mathcal G(j,n)(y))v\bigg\|_m
\nonumber\\
&\le K\sum_{j=n}^{m-1}\bigg (\frac{m}{j+1}\bigg )^a \frac{cC}{j+1}\|(D\mathcal G(j,n)(x)-D\mathcal G(j,n)(y))v\|_j.
\end{align}
Furthermore, by~\eqref{bg1sn}, \eqref{ln1}, \eqref{non2} and~\eqref{204} we have that
\begin{align}\label{zz2}
&\bigg\|\sum_{j=n}^{m-1}\cA(m,j+1)(Dg_j(\mathcal G(j,n)(y))-Dg_j(\mathcal G(j,n)(x)))D\mathcal G(j,n)(y)v\bigg\|_m
\nonumber\\
&\le \sum_{j=n}^{m-1} K\bigg (\frac{m}{j+1}\bigg )^a \|
(Dg_j(\mathcal G(j,n)(y))-Dg_j(\mathcal G(j,n)(x)))D\mathcal G(j,n)(y)v\|_{j+1}
\nonumber\\
&\le \sum_{j=n}^{m-1} KC\bigg (\frac{m}{j+1}\bigg )^a (j+1)^{2\varepsilon}  \|
(Dg_j(\mathcal G(j,n)(y))-Dg_j(\mathcal G(j,n)(x)))D\mathcal G(j,n)(y)v\|
\nonumber\\
&\le \sum_{j=n}^{m-1} \bigg (\frac{m}{j+1}\bigg )^a (j+1)^{2\varepsilon}\frac{KLC}{(j+1)^{1+2\varepsilon}}
\|\mathcal G(j,n)(x)-\mathcal G(j,n)(y)\| \cdot \|D\mathcal G(j,n)(y)v\| 
\nonumber\\
&\le \sum_{j=n}^{m-1} \bigg (\frac{m}{j+1}\bigg )^a  \frac{KLC}{j+1}\|\mathcal G(j,n)(x)-\mathcal G(j,n)(y)\|_j \cdot \|D\mathcal G(j,n)(y)v\|_j 
\nonumber\\
&\le \sum_{j=n}^{m-1} \bigg (\frac{m}{j+1}\bigg )^a  \frac{KLC}{j+1}K^2 \bigg (\frac j n\bigg )^{2a+2cCK}\|x-y\|_n \cdot \|v\|_n 
\nonumber\\
&\le K^3LC\bigg (\frac m n\bigg )^{2a+2cCK}~\sum_{j=n}^{m-1}\frac{1}{j+1}\,\|x-y\|_n\cdot \|v\|_n
\nonumber\\
&\le K^3LC\bigg (\frac m n\bigg )^{2a+2cCK+1}\|x-y\|_n\cdot \|v\|_n.
\end{align}
 Hence,
from~\eqref{zz}, \eqref{zz1} and~\eqref{zz2}, we obtain that
\[
\begin{split}
&\|(D\mathcal G(m,n)(x)-D\mathcal G(m, n)(y))v\|_m \\
 &\le K \sum_{j=n}^{m-1}\bigg (\frac{m}{j+1}\bigg )^a \frac{cC}{j+1}\|(D\mathcal G(j,n)(x)-D\mathcal G(j,n)(y))v\|_j \\
&\phantom{\le}+K^3LC\bigg (\frac m n\bigg )^{2a+2cCK+1}\|x-y\|_n \cdot \|v\|_n.
\end{split}
\]
This together with Lemma~\ref{lm-G}  implies that
\begin{equation}\label{175}
\|(D\mathcal G(m,n)(x)-D\mathcal G(m, n)(y))v\|_m\le
\tilde K\bigg (\frac m n\bigg )^{\tilde a}\|x-y\|_n \cdot \|v\|_n,
\end{equation}
for $m\ge n\ge 1$ and $x, y, v\in \R^d$, where $\tilde K:= K^3LC>0$ and $\tilde a:=2a+3cCK+1>0$.

On the other hand, observe that~\eqref{G} implies that
\[
\mathcal G(2^{m}, 2^{n})(x)=\mathcal B(m, n)x+\sum_{j=2^{n}}^{2^{m}-1} \cA(2^{m}, j+1)g_j(\mathcal G(j, 2^{n})(x)),
\]
for $m\ge n\ge 0$ and $x\in \R^d$. For $n\in \Z^+$ and $x\in \R^d$, set
\begin{align*}
f_n(x)
&:=\mathcal G(2^{n+1}, 2^{n})(x)-\mathcal B(n+1, n)x
\\
&=\sum_{j=2^n}^{2^{n+1}-1}\cA(2^{n+1}, j+1)g_j(\mathcal G(j, 2^n)(x)).
\end{align*}
Then,
\[
Df_n(x)=\sum_{j=2^n}^{2^{n+1}-1}\cA(2^{n+1}, j+1) Dg_j(\mathcal G(j, 2^n)(x))D\mathcal G(j, 2^n)(x).
\]
It is clear that $f_n$ for $n\in\Z^+$ satisfy~\eqref{fzero} by~\eqref{zero}.

In order to show that $f_n$ satisfy~\eqref{fm} with the sequence of norms $\{\|\cdot\|_{2^n}; n\in \Z^+\}$, we note that~\eqref{bg1sn}, \eqref{derofg} and \eqref{204}  imply that
\[
\begin{split}
\|Df_n(x)v \|_{2^{n+1}} &\le \sum_{j=2^n}^{2^{n+1}-1}K\bigg (\frac{2^{n+1}}{j+1} \bigg )^a \frac{cC}{j+1}K \bigg (\frac{j}{2^n} \bigg )^{a+cCK}\|v\|_{2^n}
\\
&\le K^2cC\,2^{2a+cCK}  \sum_{j=2^n}^{2^{n+1}-1}\frac{1}{j+1}\|v\|_{2^n}
\le \eta \|v\|_{2^n},
\end{split}
\]
for $n\in\Z^+$ and $x,v\in \R^d$ , where $\eta:=(K^2C\,2^{2a+cCK+1})c>0$.
This proves the first inequality of~\eqref{fm}.
Note that we can make $\eta$ sufficiently small provided that we take $c$ small enough. Moreover, \eqref{175} gives that
\[
\begin{split}
\|(Df_n(x)-Df_n (y))v\|_{2^{n+1}} &=\|(D\mathcal G(2^{n+1}, 2^n)(x)-D\mathcal G(2^{n+1}, 2^n)(y))v\|_{2^{n+1}}\\
 & \le \tilde K2^{\tilde a}\|x-y\|_{2^n} \cdot \|v\|_{2^n},
\end{split}
\]
for $n\in \Z^+$ and $x, y, v\in \R^d$.
This proves the second inequality of~\eqref{fm} and therefore \eqref{fm} is proved.

Applying Theorem~\ref{t0} to $\mathbb B=(B_n)_{n\in \Z^+}$ given by~\eqref{BN} with the sequence of norms $\{\| \cdot \|_{2^n}; \ n\in \Z^+\}$, we conclude that there exists a sequence of $C^1$-diffeomorphisms $h_n\colon \R^d \to \R^d$, $n\in \Z^+$, such that
\begin{equation}\label{LIN}
h_{n+1}\circ (B_n+f_n)=B_n\circ h_n, \quad n\in \Z^+.
\end{equation}
Moreover, there exist $M, \rho>0$ such that
\begin{equation}\label{LIN2}
\|D h_n(x)v\|_{2^n} \le M\|v\|_{2^n}, \quad \|Dh_n^{-1}(x)v\|_{2^n} \le M\|v\|_{2^n}, \quad \forall n\in \Z^+,
\end{equation}
for $v\in \R^d$ and $x\in \R^d$  satisfying $\|x\|_{2^n}\le \rho$.
Hence,
\begin{equation}\label{h}
h_{n+1}\circ \mathcal G(2^{n+1}, 2^n)=\cA(2^{n+1}, 2^n)\circ h_n, \quad n\in \Z^+.
\end{equation}
Letting $k\in \N$ and $n\in \Z^+$ be such that $2^n\le k<2^{n+1}$, we set
\begin{equation}\label{psik}
\psi_k:=\cA(k, 2^n)\circ h_n\circ \mathcal G(2^n, k)
\end{equation}
and assert that~\eqref{E} holds. For this purpose, we first assume that $2^n\le k<k+1<2^{n+1}$. Then,
\[
\begin{split}
\psi_{k+1}\circ (A_k+g_k)&=\cA(k+1, 2^n)\circ h_n \circ \mathcal G(2^n, k+1) \circ (A_k+g_k) \\
&=A_k\circ \cA(k, 2^n)\circ h_n \circ \mathcal G(2^n, k) \\
&=A_k\circ \psi_k,
\end{split}
\]
and thus~\eqref{E} holds. Let us now suppose that $k+1=2^{n+1}$. Then,  using~\eqref{h} we have that
\[
\begin{split}
A_k\circ \psi_k 
&=\cA(k+1, 2^n)\circ h_n \circ \mathcal G(2^n, k) \\
&=\cA(2^{n+1},2^n)\circ h_n \circ \mathcal G(2^n, k)\\
&=h_{n+1} \circ \mathcal G(2^{n+1}, 2^n)\circ \mathcal G(2^n, k)\\
&=h_{n+1} \circ \mathcal G(k+1, k) =h_{n+1} \circ (A_k+g_k)=\psi_{k+1} \circ (A_k+g_k),
\end{split}
\]
where the last equality holds because $k+1=2^{n+1}$ implies that
\[
\psi_{k+1}=\mathcal G(k+1, 2^{n+1})\circ h_{n+1}\circ \cA(2^{n+1}, k+1)=h_{n+1}.
\]
Consequently, \eqref{E} again holds. Finally, using~\eqref{bg1sn}, \eqref{ln1}, \eqref{204} and~\eqref{LIN2} we conclude that
\[
\begin{split}
\|D\psi_k(x)v\| &\le \|D\psi_k(x)v\|_{k} 
\le K2^{a+cCK} 
\|Dh_n(\cA(2^n, k)x)
\cA(2^n, k)v\|_{2^n} 
\\
&\le K^2M2^{2a+cCK} \|v\|_k \le CK^2M2^{2a+cCK}k^{2\varepsilon} \|v\|
\end{split}
\]
whenever $\|x\|\le \frac{\rho}{CK2^ak^{2\varepsilon}}$ since it implies
$$
\|\cA(2^n, k)x\|_{2^n}\le K2^a \|x\|_k \le CK2^a k^{2\varepsilon}\|x\| \le \rho.
$$
Hence, the first estimate in~\eqref{EE} holds. Similarly, one can establish the second estimate in~\eqref{EE}. The proof of this theorem is completed.
\end{proof}

  We now apply Theorem \ref{theo: main linearization discrete} to a very simple example, which is not covered by previously known results.
\begin{example}\label{example: main theo discrete}
Let us consider the sequences of linear operators $\mathbb{A}=(A_n)_{n\in \N}$ and $(P_n)_{n\in \N}$ acting on $\R^2$ given by
\[
A_n:=\begin{pmatrix}
\frac{n}{n+1} & 0\\
0 & \frac{n+1}{n}
\end{pmatrix} \quad\text{ and }\quad P_n:=\begin{pmatrix}
1 & 0\\
0 & 0
\end{pmatrix},\quad n\in \N.
\]
Then,
\[
\cA(m,n)=\begin{pmatrix}
\frac{n}{m} & 0\\
0 & \frac{m}{n}
\end{pmatrix} \quad  \text{for every } m,n\in \N.
\]
It is easy to see that $(A_n)_{n\in\N}$ admits a nonuniform strong polynomial dichotomy with constants $K=a=\lambda=1$, $\varepsilon=0$. Furthermore, it admits a strong polynomial dichotomy with respect to $\mathcal{S}=\{\|\cdot\|_n; \ n\in \N\}$, where $\|\cdot \|_n=\| \cdot \|$ is the Euclidean norm. It is easy to see that $\Sigma_{PD,\mathbb{A},\mathcal{S}}=\{-1,1\}$. In particular, $r=2$, $a_1=b_1=\frac{1}{2}$ and $a_2=b_2=2$. Thus, conditions \eqref{sp1} and \eqref{sp2} are satisfied. 

Consider now $\xi:\R\to \R$ given by $\xi(x)=x^2e^{-x^2}$. Then, $D\xi(x)=2xe^{-x^2}(1-x^2)$ and, consequently, $|D\xi(x)|\leq 1$ and $|D\xi(x)-D\xi(y)|\leq 2|x-y|$. Thus, taking $g_n:\R^2\to \R^2$ as
\[g_n(x_1,x_2)=\frac{c}{n+1}(\xi(x_1),\xi(x_2)), \quad n\in \N,\]
where $c>0$ is a constant, it follows that conditions \eqref{zero}, \eqref{non1} and \eqref{non2} are satisfied. In particular, Theorem \ref{theo: main linearization discrete} may be applied whenever $c>0$ is small enough. On the other hand, it is easy to see that the sequence  $\mathbb{A}=(A_n)_{n\in \N}$ does not admit a nonuniform strong  exponential dichotomy. Therefore, previously known results such as the one in \cite{DZZ} can not be applied to this example.
\end{example}

\subsection{Differentiable and H\"older linearization  under polynomial dichotomy}
In the next result we remove the spectral gap condition from our hypothesis (i.e., the first inequality of \eqref{sp2}), and obtain a linearization which is simultaneously differentiable (at 0) and H\"older continuous in a neighborhood of 0.

\begin{theorem}\label{theo: diff+holder linearization}
Suppose that $\mathbb A=(A_n)_{n\in \N}$, $\mathcal S=\{ \| \cdot \|_n; \ n\in \N\}$ and $\Sigma_{PD, \mathbb A, \mathbb S}$ are given as  in Theorem~\ref{theo: main linearization discrete} and that \eqref{sp1} and \eqref{sp3} hold.
Let $g_n:\mathbb{R}^d\to \mathbb{R}^d$, $n\in \N$, be a sequence of $C^1$ maps $g_n\colon \R^d \to \R^d$ such that \eqref{zero}-\eqref{non2} hold, and let $\alpha_1$ and $\varrho$ be given in Theorem {\rm \ref{theo: main linearization discrete}}.
Then, provided that $c$ is sufficiently small, there exists a sequence of homeomorphisms $\psi_n:\mathbb{R}^d\to \mathbb{R}^d$, $n\in \N$, such that \eqref{E} holds.
Moreover, there exist constants $ L', \rho'>0$ such that 
\begin{equation}\label{000}
\psi_n(x)=x+n^{2\varepsilon(1+\varrho)} o(\|x\|^{1+\varrho}),\quad
\psi_n^{-1}(x)=x+n^{2\varepsilon(1+\varrho)} o(\|x\|^{1+\varrho})
\end{equation}
as $\|x\|\to 0$ and
\begin{equation}\label{001}
\|\psi_n(x)-\psi_n(y)\|\!\le\!  L'n^{2\varepsilon \alpha_1 }\|x-y\|^{\alpha_1},\quad
\|\psi_n^{-1}(x)-\psi_n^{-1}(y)\|\!\le\! \ L' n^{2\varepsilon \alpha_1} \|x-y\|^{\alpha_1}
\end{equation}
for $x,y\in \mathbb{R}^d$ satisfying $\|x\|,\|y\|\le \frac{\rho'}{n^{2\varepsilon}}$, where $\varepsilon$ is as in the Definition~\ref{def-SPD}.
\end{theorem}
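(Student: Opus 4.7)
The plan is to follow the argument for Theorem \ref{theo: main linearization discrete} line by line, but invoke Theorem \ref{t1} in place of Theorem \ref{t0} at the crucial step, since the present hypotheses omit the spectral gap assumption. All the reductions between polynomial and exponential regimes via the time change $n\mapsto 2^n$ remain unchanged.

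Specifically, I would set $G_n:=A_n+g_n$, form the nonlinear cocycle $\mathcal G(m,n)$ as in \eqref{G}, and reuse the estimates \eqref{204} and \eqref{175} on $D\mathcal G(m,n)$, since their derivations depend only on \eqref{zero}--\eqref{non2} and on the polynomial dichotomy with respect to $\mathcal S$, not on any spectral gap. Then I would define
\[
f_n(x) := \mathcal G(2^{n+1},2^n)(x) - \mathcal B(n+1,n)x, \qquad n\in\Z^+,
\]
with $B_n$ as in \eqref{BN}, and verify, exactly as in the proof of Theorem \ref{theo: main linearization discrete}, that $(f_n)_{n\in\Z^+}$ satisfies \eqref{fzero} and \eqref{fm} with respect to the sequence of norms $\tilde{\mathcal S}=\{\|\cdot\|_{2^n}\}_{n\in\Z^+}$, with Lipschitz constant $\eta$ arbitrarily small provided $c$ is small. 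By Corollary \ref{KOR1} the sequence $\mathbb B=(B_n)_{n\in\Z^+}$ admits a strong exponential dichotomy with respect to $\tilde{\mathcal S}$ whose spectrum $\Sigma_{ED,\mathbb B,\tilde{\mathcal S}}$ has the form \eqref{EDB} with endpoints $a_i,b_i$ related to the polynomial endpoints via \eqref{PDA}, so \eqref{sp1} and \eqref{sp3} translate directly to the exponential setting. Theorem \ref{t1} then produces homeomorphisms $h_n\colon\R^d\to\R^d$ with $h_{n+1}\circ(B_n+f_n)=B_n\circ h_n$, differentiable at $0$ with $\|h_n(z)-z\|_{2^n}=o(\|z\|_{2^n}^{1+\varrho})$, and locally $\alpha_1$-H\"older in the norms $\|\cdot\|_{2^n}$. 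I would then define $\psi_k$ via the interpolation formula \eqref{psik}; the verification of \eqref{E} is identical to that in the proof of Theorem \ref{theo: main linearization discrete} and uses the intertwining identity \eqref{h}.

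The principal obstacle is the bookkeeping that transfers the estimates on $h_n$ from the norms $\|\cdot\|_{2^n}$ to the Euclidean estimates \eqref{000}--\eqref{001} on $\psi_k$, tracking the factors of $n^{2\varepsilon}$ produced by \eqref{ln1} with $\delta=2\varepsilon$. For the H\"older bound \eqref{001}, one writes, for $2^n\le k<2^{n+1}$,
\[
\psi_k(x)-\psi_k(y)=\cA(k,2^n)\bigl[h_n(\mathcal G(2^n,k)(x))-h_n(\mathcal G(2^n,k)(y))\bigr],
\]
and bounds the three factors using \eqref{ln1}, \eqref{bg1sn}, \eqref{204}, together with the local $\alpha_1$-H\"older estimate on $h_n$; the Lipschitz constant of $\mathcal G(2^n,k)$ enters to the power $\alpha_1$ and yields the prefactor $n^{2\varepsilon\alpha_1}$, while a constraint of the form $\|x\|,\|y\|\le \rho'/n^{2\varepsilon}$ is precisely what is needed to ensure that $\mathcal G(2^n,k)(x),\mathcal G(2^n,k)(y)$ land in the ball of radius $\rho$ (in $\|\cdot\|_{2^n}$) on which the H\"older bound for $h_n$ from Theorem \ref{t1} is available.

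For \eqref{000}, the subtlety is that $h_n\approx\mathrm{Id}$ near $0$ does not directly imply $\psi_k(x)\approx x$, because $\cA(k,2^n)\circ\mathcal G(2^n,k)\neq \mathrm{Id}$. I would handle this through the decomposition
\[
\psi_k(x)-x=\cA(k,2^n)\bigl[h_n(\mathcal G(2^n,k)(x))-\mathcal G(2^n,k)(x)\bigr]+\bigl[\cA(k,2^n)\mathcal G(2^n,k)(x)-x\bigr].
\]
The first bracket is controlled by the differentiability-at-$0$ estimate of $h_n$, producing $o(\|\mathcal G(2^n,k)(x)\|_{2^n}^{1+\varrho})$ which, after applying \eqref{204} and \eqref{ln1}, is $n^{2\varepsilon(1+\varrho)}o(\|x\|^{1+\varrho})$ in the Euclidean norm. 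The second bracket, by iterating \eqref{G}, equals $\sum_{j=k}^{2^n-1}\cA(k,j+1)g_j(\mathcal G(j,k)(x))$; using the quadratic bound $\|g_j(z)\|\le \tfrac{L}{2(j+1)^{1+2\varepsilon}}\|z\|^2$ (from \eqref{zero}, \eqref{non2} and Taylor's theorem), \eqref{204}, \eqref{bg1sn} and \eqref{ln1}, this term is $O(\|x\|^2)$ with a polynomial factor in $n$ and so is absorbed into the $o(\|x\|^{1+\varrho})$ since $1+\varrho<2$. Combining these two contributions yields the first identity in \eqref{000}; the analysis of $\psi_n^{-1}$ is symmetric, using $\psi_k^{-1}=\mathcal G(k,2^n)\circ h_n^{-1}\circ\cA(2^n,k)$ and the corresponding estimates from Theorem \ref{t1} for $h_n^{-1}$.
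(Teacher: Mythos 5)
Your proposal for establishing \eqref{001} mirrors the paper's proof exactly; both bound $\cA(k,2^n)\bigl[h_n(\mathcal G(2^n,k)(x))-h_n(\mathcal G(2^n,k)(y))\bigr]$ using \eqref{bg1sn}, \eqref{204}, \eqref{ln1} and the H\"older estimate on $h_n$ from Theorem~\ref{t1}. For \eqref{000}, however, your route is genuinely different from the paper's. You decompose
\[
\psi_k(x)-x=\cA(k,2^n)\bigl[h_n(\mathcal G(2^n,k)(x))-\mathcal G(2^n,k)(x)\bigr]+\bigl[\cA(k,2^n)\mathcal G(2^n,k)(x)-x\bigr]
\]
and bound the two brackets directly in one shot, the first via the differentiability-at-$0$ estimate of $h_n$ and the second via the quadratic Taylor bound $\|g_j(z)\|\le \frac{L}{2(j+1)^{1+2\varepsilon}}\|z\|^2$ summed over the dyadic block, which yields $O(\|x\|_k^2)=o(\|x\|_k^{1+\varrho})$ since $1+\varrho<2$. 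The paper instead establishes the intermediate claim $\|\psi_k(x)-x\|_k=o(\|x\|_k^{1+\varrho})$ by induction within each dyadic block, using the one-step conjugacy identity $\psi_{k+1}=A_k\circ\psi_k\circ(A_k+g_k)^{-1}$ and splitting $\psi_{k+1}-\Id$ accordingly. Both strategies are correct and rely on the same quadratic-smallness of $g_j$ and on the backward analogue of \eqref{204}; yours avoids the induction and consolidates the second contribution into a single explicit sum, which is arguably more transparent. One small slip in your write-up: the second bracket equals $-\sum_{j=2^n}^{k-1}\cA(k,j+1)g_j(\mathcal G(j,k)(x))$, not $\sum_{j=k}^{2^n-1}\cdots$ (the index range you wrote is empty since $k\ge 2^n$); the correct range follows from applying \eqref{G} with $m=k$, $n=2^n$ evaluated at $y=\mathcal G(2^n,k)(x)$. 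This is a sign and bookkeeping correction only and does not affect the validity of the argument.
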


\begin{proof}
We use the same notation as in the proof of Theorem~\ref{theo: main linearization discrete}. By Theorem~\ref{t1}, there exist a sequence of homeomorphisms $h_n\colon \R^d \to \R^d$, $n\in \Z^+$ satisfying~\eqref{LIN} and  constants $\tilde L, \varrho,\rho>0$  such that 
\begin{equation}\label{cd0}
\|h_n(x)-x\|_{2^n}=o(\|x\|_{2^n}^{1+\varrho}),\quad
\|h_n^{-1}(x)-x\|_{2^n}=o(\|x\|_{2^n}^{1+\varrho})
\end{equation}
as $\|x\|_{2^n}\to 0$ and
\begin{equation}\label{cd1}
\|h_n(x)-h_n(y)\|_{2^n}\!\le\! \tilde L\|x-y\|_{2^n}^{\alpha_1},\quad
\|h_n^{-1}(x)-h_n^{-1}(y)\|_{2^n}\!\le\! \tilde L \|x-y\|_{2^n}^{\alpha_1},
\end{equation}
for $x,y\in\mathbb{R}^d$ satisfying $\|x\|_{2^n},\|y\|_{2^n}\le \rho$. 

Next, we may construct a sequence of homeomorphisms $\psi_k \colon \R^d \to \R^d$, $k\in \N$, exactly as in the proof of Theorem~\ref{theo: main linearization discrete} (see~\eqref{psik}) such that~\eqref{E} holds.  Moreover, using~\eqref{bg1sn}, \eqref{ln1}, \eqref{204}, \eqref{cd1} we have that
\[
\begin{split}
\| \psi_k(x)-\psi_k(y)\| &=\|\cA(k, 2^n)h_n({\mathcal G}(2^n, k)(x))-\cA(k, 2^n)h_n({\mathcal G}(2^n, k)(y))\| \\
&\le \|\cA(k, 2^n)h_n({\mathcal G}(2^n, k)(x))-\cA(k, 2^n)h_n({\mathcal G}(2^n, k)(y))\|_k\\
&\le K2^a \|h_n({\mathcal G}(2^n, k)(x))-h_n({\mathcal G}(2^n, k)(y))\|_{2^n} \\
&\le K\tilde L2^a\|{\mathcal G}(2^n, k)(x)-{\mathcal G}(2^n, k)(y)\|_{2^n}^{\alpha_1} \\
&\le K^{1+\alpha_1}\tilde L2^{a(1+\alpha_1)+cCK\alpha_1} \|x-y\|_{k}^{\alpha_1}\\
&\le C^{\alpha_1}K^{1+\alpha_1}\tilde L2^{a(1+\alpha_1)+cCK\alpha_1}k^{2\varepsilon \alpha_1} \|x-y\|^{\alpha_1} \\
&\le L'k^{2\varepsilon \alpha_1} \|x-y\|^{\alpha_1},
\end{split}
\]
for $2^n\le k<2^{n+1}$ and $x, y\in \R^d$ such that $\|x\|, \|y\| \le \frac{\rho}{CK2^a k^{2\varepsilon}}$, 
where $L':=C^{\alpha_1}K^{1+\alpha_1}\tilde L2^{a(1+\alpha_1)+cCK\alpha_1}>0$. Hence, the first estimate in~\eqref{001} is proved by putting $\rho':=\frac{\rho}{CK2^a}$. Similarly, one can establish the second one.

We now claim that for each $k\in \N$, we have that 
\begin{equation}\label{cd0-k}
    \|\psi_k(x)-x\|_k=o(\|x\|_k^{1+\varrho})\quad \mbox{as $\|x\|_k\to 0$}.
\end{equation}
When $k=2^n$, \eqref{cd0-k} follows readily from~\eqref{cd0} since $\psi_k=h_n$. Assume now that~\eqref{cd0-k} 
holds for some $2^n\le k<2^{n+1}-1$. By~\eqref{psik}, we have that 
$
\psi_{k+1}=A_k\circ \psi_k \circ (A_k+g_k)^{-1}.
$
Hence,  
\[
\begin{split}
\psi_{k+1}-\Id &=A_k\circ \psi_k \circ (A_k+g_k)^{-1}-\Id \\
&=A_k\circ (\Id+{\psi}_k-\Id)\circ (A_k^{-1}+(A_k+g_k)^{-1}-A_k^{-1})-\Id \\
&=A_k \circ ((A_k+g_k)^{-1}-A_k^{-1})+A_k\circ ({\psi}_k-\Id)\circ (A_k+g_k)^{-1}.
\end{split}
\]
In order estimate $\psi_{k+1}-\Id$, we see from~\eqref{ln1}, \eqref{non1} and~\eqref{non2} (together with the mean-value theorem) that 
\begin{align*}
&\|A_k \circ ((A_k+g_k)^{-1}-A_k^{-1})(x)\|_{k+1}
\\
&= \|g_k\circ(A_k+g_k)^{-1}(x)\|_{k+1}
\le C(k+1)^{2\varepsilon}\|g_k\circ(A_k+g_k)^{-1}(x)\|
\\
&\le C(k+1)^{2\varepsilon}\frac{cL}{(k+1)^{2+4\varepsilon}}
\|(A_k+g_k)^{-1}(x)\|^2
\\
&\le cCL\|(A_k+g_k)^{-1}(x)\|_k^2=o(\|x\|_{k+1})
\end{align*}
as $\|x\|_{k+1}\to 0$, which implies that $\|(A_k+g_k)^{-1}(x)\|_{k}\to 0$, as known from~\eqref{204}. Moreover, 
we see from that \eqref{bg1sn} and~\eqref{cd0-k} that
\[
\begin{split}
&\|A_k\circ ({\psi}_k-\Id)\circ (A_k+g_k)^{-1}(x)\|_{k+1}
\\
&\le  K2^a\|({\psi}_k-\Id)\circ (A_k+g_k)^{-1}(x)\|_k 
\\
&= o(\|(A_k+g_k)^{-1}(x)\|_k)
=o(\|x\|_{k+1})
\end{split}
\]
as $\|x\|_{k+1}\to 0$.
From the last two estimates we conclude that~\eqref{cd0-k} holds for $k+1$. By induction, we obtain that~\eqref{cd0-k} holds for each $k\in \N$.
Thus, recalling~\eqref{ln1} we have that 
\begin{equation*}
\|\psi_k(x)-x\|\le\|\psi_k(x)-x\|_k=o(\|x\|_{k}^{1+\varrho})=k^{2\varepsilon(1+\varrho)}o(\|x\|^{1+\varrho})
\end{equation*}
as $\|x\|\to 0$, which proves the first estimate of~\eqref{000}. The second one of~\eqref{000} can be proved similarly and the proof is completed. 
\end{proof}

\subsection{The case of infinite-dimensional dynamics}\label{sec: inf dim discrete}
The purpose of this short subsection is to indicate how one can extend our results to the infinite-dimensional setting. Let $X$ be an arbitrary Hilbert space and $\mathcal B(X)$ be a space of all bounded linear operators on $X$.  

One can introduce all relevant notions of dichotomy (as in Definitions~\ref{def-SPD}-\ref{1753}) for sequences of invertible operators in $\mathcal B(X)$. Moreover, one can establish versions of Propositions~\ref{LN}, \ref{LN2} and~\ref{WW} in the infinite-dimensional setting by arguing exactly as we did in the finite-dimensional setting. Indeed, note that in the proofs of these results finite dimensionality was never used.

Moreover, for a sequence $\mathbb A$ of invertible operators in $\mathcal B(X)$ and a sequence of norms $\mathcal S$ on $X$, one can introduce $\Sigma_{PD, \mathbb A, \mathcal S}$ and $\Sigma_{ED, \mathbb A, \mathcal S}$ as in Definitions~\ref{def-01} and~\ref{def-SED}. Then, Corollary~\ref{KOR} remains valid. However, Proposition~\ref{22} (and consequently also Corollary~\ref{KOR1}) fails to hold in the infinite-dimensional setting. Hence, one can formulate versions of Theorems~\ref{theo: main linearization discrete}
and \ref{theo: diff+holder linearization} in the infinite-dimensional case by assuming an additional condition that $\Sigma_{PD, \mathbb A, \mathcal S}$ has the form~\eqref{PDA} (which in the finite-dimensional setting is automatically satisfied).

In particular, we note that the preparatory results Theorem~\ref{t0} and Theorem~\ref{Ap} can be established in exactly the same manner. We only emphasize that the assumption that $X$ is a Hilbert space would be used in the construction of (closed) subspaces $S_i$ appearing in the proof of Theorem~\ref{t0}.

\section{The case of continuous time}\label{sec: continuous time case}
In this section we present versions of Theorems \ref{theo: main linearization discrete} and \ref{theo: diff+holder linearization} in the case of continuous time dynamical systems. 

Let us consider a linear nonautonomous equation 
\begin{equation}\label{LDE}
x'=A(t)x \quad t\ge 1,
\end{equation}
where $A:[1,\infty)\to \R^{d\times d}$ is a continuous map acting on $[1, \infty)$ and with values in the family of linear operators on $\R^d$. By $T(t,s)$ we will denote the evolution family associated with~\eqref{LDE}.
 Moreover, given a continuous function  $f\colon [1, \infty) \times \R^d \to \R^d$, we consider the following semilinear differential equation
\begin{equation}\label{NDE}
x'=A(t)x+f(t,x), \quad t\ge 1.
\end{equation}

\subsection{Polynomial dichotomies for continuous time dynamics}
The main notions of dichotomy we are going to use in this section are the following.

\begin{definition}\label{def: nonunif pol dich continuous}
We say that~\eqref{LDE} admits a \emph{nonuniform strong polynomial dichotomy} if there exist $K>0$, $a\geq \lambda>0$, $\varepsilon\geq 0$ and a family of projections $P(t)$, $t\ge 1$ on $\R^d$ such that the following properties hold:
\begin{itemize}
\item  for $t\ge s\ge 1$,
\begin{equation}\label{Pro}
P(t)T(t,s)=T(t,s)P(s);
\end{equation}
\item for $t\ge  s$,
\begin{equation}\label{B1}
\| T(t,s)P(s)\| \le K \bigg (\frac t s\bigg )^{-\lambda}s^\varepsilon,
\end{equation}
and 
\begin{equation}\label{B2}
\|T(s,t)Q(t)\| \le K \bigg (\frac t s\bigg )^{-\lambda}t^\varepsilon,
\end{equation}
where $Q(t)=\Id-P(t)$;
\item for $t\ge s$,
\begin{equation}\label{bnd}
\| T(t,s)\| \le K\bigg (\frac t s \bigg )^as^\varepsilon \quad \text{and} \quad \| T(s,t)\| \le K \bigg(\frac t s\bigg )^at^\varepsilon.
\end{equation}
\end{itemize}
In particular, when $\varepsilon=0$ we say that~\eqref{LDE} admits a uniform strong polynomial
dichotomy.
\end{definition}

\begin{definition}
Let $\mathcal{S}=\{\| \cdot \|_t; \ t\ge 1 \}$ be a family of norms on $\R^d$.
We say that \eqref{LDE} admits a \emph{strong polynomial dichotomy with respect to the family of norms $\mathcal{S}$} if there exist $K>0$, $a\geq \lambda>0$ and a family of projections $P(t)$, $t\ge 1$ on $\R^d$ such that the following properties hold:
\begin{itemize}
\item \eqref{Pro} is satisfied for every $t\geq s\geq 1$;
\item for $t\geq s\geq 1$ and every $x\in \mathbb{R}^d$, 
\begin{equation}\label{d1}
\| T(t,s)P(s) x\|_t \leq K\left(\frac{t}{s}\right)^{-\lambda} \| x\|_s
\end{equation}
and 
\begin{equation}\label{d2}
\| T(s,t)Q(t) x \|_s \leq  K \left(\frac{t}{s}\right)^{-\lambda}\| x\|_t,
\end{equation}
where $Q(t)=\Id-P(t)$;
\item for $t\ge s \geq 1$ and $x\in \mathbb{R}^d$,
\begin{equation}\label{bnd-norm}
\| T(t,s)x\|_t \le K\bigg (\frac t s \bigg )^a\|x\|_s \quad \text{and} \quad \| T(s,t)x\|_s \le K \bigg(\frac t s\bigg )^a\|x\|_t.
\end{equation}
\end{itemize}
\end{definition}

As in the discrete time case, it turns out that the previous notions are strongly related.
\begin{proposition}\label{prop: equiv nonunifXnorms cont}
The following properties are equivalent:
\begin{enumerate}
\item \eqref{LDE} admits a nonuniform strong polynomial dichotomy;
\item \eqref{LDE} admits a strong polynomial dichotomy with respect to a family of norms $\mathcal{S}=\{\| \cdot \|_t; \ t\ge 1\}$ with the property that there exist $C>0$ and $\delta \ge 0$ such that 
\begin{equation}\label{ln}
\| x\| \le \| x\|_t \le Ct^\delta \| x\|, \quad x\in \mathbb{R}^d \text{ and }  t\ge 1.
\end{equation}
\end{enumerate}
\end{proposition}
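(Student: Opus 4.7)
The plan is to adapt Proposition~\ref{LN} essentially verbatim to continuous time, replacing the discrete cocycle $\cA(m,n)$ by the evolution family $T(t,s)$ and the integer set $\N$ by the ray $[1,\infty)$.

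For the implication (2) $\Rightarrow$ (1), I would simply chain~\eqref{d1} with the inequalities in~\eqref{ln}: applying the left inequality in~\eqref{ln} at time $t$ to $T(t,s)P(s)x$ and the right inequality at time $s$ to $x$ yields
\[
\|T(t,s)P(s)x\| \le \|T(t,s)P(s)x\|_t \le K(t/s)^{-\lambda}\|x\|_s \le KC(t/s)^{-\lambda}s^{\delta}\|x\|,
\]
which is exactly~\eqref{B1} with $\varepsilon = \delta$. The bounds~\eqref{B2} and~\eqref{bnd} follow in the same fashion from~\eqref{d2} and~\eqref{bnd-norm}.

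For the harder direction (1) $\Rightarrow$ (2), I would construct Lyapunov-type norms mirroring the discrete formulas. For $t \ge 1$ and $x \in \R^d$, set $\|x\|_t := \|x\|_t^s + \|x\|_t^u$, where
\[
\|x\|_t^s := \sup_{\tau \ge t}\bigl(\|T(\tau,t)P(t)x\|(\tau/t)^{\lambda}\bigr) + \sup_{1 \le \tau < t}\bigl(\|T(\tau,t)P(t)x\|(t/\tau)^{-a}\bigr)
\]
and $\|x\|_t^u$ is defined symmetrically, with $Q(t)$ replacing $P(t)$ and the roles of the exponents $\lambda$ and $-a$ exchanged between the two suprema. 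The lower bound $\|x\| \le \|x\|_t$ is immediate from $\|x\| \le \|P(t)x\| + \|Q(t)x\|$ together with the fact that each relevant supremum includes the value at $\tau = t$. The upper bound of the form $Ct^{2\varepsilon}\|x\|$ follows by inserting~\eqref{B1}, \eqref{B2} and~\eqref{bnd} into each of the four suprema, exactly as in the discrete computation, which produces~\eqref{ln} with $C = 2(K+K^2)$ and $\delta = 2\varepsilon$.

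The remaining task is to verify~\eqref{d1}, \eqref{d2} and~\eqref{bnd-norm}. This is where I would repeat the four-case manipulation of~\eqref{x1}--\eqref{x4}: to bound $\|T(t,s)P(s)x\|_t$ for $t \ge s \ge 1$, split the defining supremum according to whether the running variable $\tau$ satisfies $\tau \ge t$, $s \le \tau < t$, or $1 \le \tau < s$, then factor out the appropriate power of $t/s$ so that what remains is recognized as a piece of $\|x\|_s$. Summing the $P$ and $Q$ contributions yields~\eqref{bnd-norm}. The only genuine difference from the discrete case is the lower cutoff $\tau \ge 1$ in the backward supremum, but this plays precisely the role of $m \ge 1$ in the discrete setting and introduces no new difficulty. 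The main obstacle is therefore purely bookkeeping: tracking how the constants $K$, $a$ and $\lambda$ combine through the chain of estimates, and confirming that the factor of $2$ absorbed when merging $\sup_{\tau \ge t}$ with $\sup_{s \le \tau < t}$ is inherited cleanly by the final bound.
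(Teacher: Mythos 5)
Your proposal is correct and matches the paper's argument essentially verbatim: the paper defines exactly the Lyapunov norms $\|x\|_t = \|x\|_t^s + \|x\|_t^u$ that you describe and then simply states that the verification of (2), and the converse (2) $\Rightarrow$ (1), proceed "as in the proof of Proposition~\ref{LN}." Your outline (chaining~\eqref{ln} with~\eqref{d1}--\eqref{bnd-norm} for the easy direction, and splitting each supremum over $\tau \ge t$, $s \le \tau < t$, $1 \le \tau < s$ for the harder one, with $C = 2(K+K^2)$ and $\delta = 2\varepsilon$) is precisely the adaptation the paper has in mind.
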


\begin{proof} 
Assume \eqref{LDE} admits a nonuniform strong polynomial dichotomy. For $t \ge 1$ and $x\in \R^d$, set 
\[\|x\|_t:=\|x\|^s_t+\|x\|^u_t
\]
where
\[
\begin{split}
\|x\|^s_t &=\sup_{r\ge t} \left(\| T(r, t)P(t) x\|\left(\frac{r}{t}\right)^\lambda \right) +\sup_{r\leq t}\left( \| T(r, t)P(t) x\rVert \left(\frac{t}{r}\right)^{-a}\right)
\end{split}
\]
and
\[
\begin{split}
\|x\|^u_t &=\sup_{r\leq t} \left(\| T(r, t)Q(t)x\|\left(\frac{t}{r}\right)^\lambda \right) +\sup_{r\geq t}\left( \| T(r, t)Q(t) x\rVert \left(\frac{r}{t}\right)^{-a}\right).
\end{split}
\]
Then, proceeding as in the proof of Proposition \ref{LN} we conclude that (2) is satisfied. Similarly, if (2) is satisfied then arguing as in the proof of Proposition \ref{LN} we conclude that (1) is satisfied.
\end{proof}

In the next proposition we present a relationship between the dynamics of \eqref{LDE} and a suitable discretization of it. 
\begin{proposition} \label{prop: equiv pol dich contXdisc}
Let $\mathcal{S}=\{\| \cdot \|_t; \ t\ge 1\}$ be a family of norms on $\R^d$.
Assume that there exist $K,a>0$ such that the evolution family $T(t,s)$ of \eqref{LDE} satisfies \eqref{bnd-norm}.
Then, the following properties are equivalent:
\begin{itemize}
\item \eqref{LDE} admits a strong polynomial dichotomy with respect to the family of norms $\mathcal{S}$;
\item the sequence $(A_n)_{n\in \N}$ admits a strong polynomial dichotomy with respect to the family of norms $\{\|\cdot\|_n; \ n\in \N\}$, where
\begin{equation}\label{AN}
A_n=T(n+1, n), \quad n\in \N.
\end{equation}
\end{itemize}
\end{proposition}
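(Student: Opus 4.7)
The plan is to prove both implications by relating the continuous evolution family $T(t,s)$ to its discretization via the polynomial growth bound \eqref{bnd-norm}, which provides uniform control on the propagator over intervals of length less than one.

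For the forward direction, I would set $\tilde P_n := P(n)$ for $n \in \N$ and observe that $\cA(m,n) = T(m,n)$ for integer $m \geq n$ by the cocycle property of $T$. The commutation $A_n \tilde P_n = \tilde P_{n+1} A_n$ is then a specialization of \eqref{Pro}, and the discrete dichotomy estimates \eqref{pd1sn} and \eqref{bg1sn} with respect to $\{\|\cdot\|_n\}_{n\in\N}$ follow directly by restricting $t,s$ to natural numbers in \eqref{d1}, \eqref{d2} and \eqref{bnd-norm}. This direction should be routine.

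For the reverse direction, suppose $(A_n)_{n\in\N}$ admits a strong polynomial dichotomy with constants $K', a', \lambda'$ and projections $\tilde P_n$. I would extend these to all $t \geq 1$ by setting
\[
P(t) := T(t, \lfloor t \rfloor)\, \tilde P_{\lfloor t \rfloor}\, T(\lfloor t \rfloor, t), \qquad t \geq 1.
\]
That $P(t)$ is a projection follows from $T(t,\lfloor t\rfloor)T(\lfloor t\rfloor,t) = \Id$, and $P(n) = \tilde P_n$ for $n \in \N$. To establish \eqref{Pro} for $t \geq s \geq 1$ with $m := \lfloor t\rfloor$ and $n := \lfloor s\rfloor$, I would split into the cases $m > n$ and $m = n$, writing $T(t,s) = T(t,m)\, \cA(m,n)\, T(n,s)$ and sliding $\tilde P_n$ through $\cA(m,n)$ via the discrete invariance $\tilde P_m \cA(m,n) = \cA(m,n) \tilde P_n$.

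To verify \eqref{d1} I would decompose
\[
T(t,s) P(s) x = T(t,m)\, \cA(m,n)\, \tilde P_n\, T(n,s) x,
\]
apply \eqref{bnd-norm} to $T(t,m)$ and $T(n,s)$ (noting $t/m,\, s/n \in [1,2)$, so each contributes a bounded constant at most $K\cdot 2^a$), and apply the discrete estimate \eqref{pd1sn} to $\cA(m,n)\tilde P_n$, yielding a bound of order $(m/n)^{-\lambda'}\|x\|_s$. Since $n\geq 1$ and $m\geq 1$ force $(m/n)/2 \leq t/s \leq 2(m/n)$, we have $(m/n)^{-\lambda'} \leq 2^{\lambda'} (t/s)^{-\lambda'}$, giving the required rate. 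The degenerate case $m = n$ uses $\cA(n,n) = \Id$ together with \eqref{pd1sn} to bound $\tilde P_n$, with $(t/s)^{-\lambda'} \geq 2^{-\lambda'}$ absorbing the constants. Estimate \eqref{d2} follows by a symmetric argument. The main obstacle is the bookkeeping involved in decomposing $T(t,s)$ across integer times while preserving polynomial rates expressed in the continuous parameter $t/s$ rather than the discrete $m/n$; once the factor-of-two comparison is noted, the remaining computations are routine applications of \eqref{bnd-norm} and the discrete dichotomy hypothesis.
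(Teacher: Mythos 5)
Your proposal is correct and follows essentially the same route as the paper: forward by restricting $t,s$ to integers, and backward by defining $P(t)=T(t,\lfloor t\rfloor)\tilde P_{\lfloor t\rfloor}T(\lfloor t\rfloor,t)$, splitting $T(t,s)=T(t,m)\cA(m,n)T(n,s)$, and absorbing the $[1,2)$-scale factors from \eqref{bnd-norm} together with the comparison $(m/n)^{-\lambda}\le 2^{\lambda}(t/s)^{-\lambda}$. The paper's proof is the same argument with slightly different notation (it writes $n\le t<n+1$ instead of $\lfloor t\rfloor$), so no substantive difference to report.
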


\begin{proof}
Considering $\cA(m,n)$ as in \eqref{eq: cocycle} with $(A_n)_{n\in \N}$ given by \eqref{AN} we have that 
\begin{equation}\label{AT}
\cA(m,n)=T(m,n), \quad m, n\in \N.
\end{equation}
Assume that~\eqref{LDE} admits a strong polynomial dichotomy with respect to the family of norms $\mathcal{S}$ and projections $P(t)$, $t\ge 1$. By~\eqref{d1} and~\eqref{AT}, we have that 
\[
\| \cA(m,n)P(n)x\|_m \le K \bigg (\frac m n\bigg )^{-\lambda}\|x\|_n, \quad m\ge n \text{ and }x\in\R^d.
\]
Similarly, \eqref{d2} and~\eqref{AT} imply that 
\[
\| \cA(n,m)Q(m)\|_n \le K \bigg (\frac m n\bigg )^{-\lambda}\|x\|_m, \quad m\ge n \text{ and }x\in\R^d.
\]
Moreover, by \eqref{bnd-norm} and~\eqref{AT} we get that for every $x\in \R^d$ and $m\ge n$,
\begin{equation*}
\| \cA(m,n)x\|_m \le {K}\bigg (\frac m n\bigg )^{a}\|x\|_n \quad \text{and} \quad \| \cA(n,m)x\|_n \le {K}\bigg (\frac m n\bigg )^{a}\|x\|_m.
\end{equation*}
Consequently, we conclude that the sequence $(A_n)_{n\in \N}$ admits a nonuniform strong polynomial dichotomy with respect to the sequence of norms $\{\|\cdot\|_n; \ n\in \N\}$ and projections $P(n)$, $n\in \N$.

Assume now that the sequence $(A_n)_{n\in \N}$ admits a strong polynomial dichotomy with respect the sequence of norms $\{ \| \cdot \|_n; \  n\in \N\}$ and projections $P_n$, $n\in \N$. Hence, there exist $K>0$ and $a\geq \lambda >0$ such that~\eqref{pro}, \eqref{pd1sn} and~\eqref{bg1sn} holds.
For $n\in \N$ and $t\in [n, n+1)$, we set
\[
P(t)=T(t,n)P_n T(n,t).
\]
One can easily verify that $P(t)$ is a projection for each $t\ge 1$.
Take $t,s\geq 1$ and assume $t\geq s$. We choose $m, n\in \N$ such that $m\le t<m+1$ and $n\le s <n+1$. Clearly, $m\ge n$. Then,
\[
\begin{split}
T(t,s)P(s)&=T(t,s)T(s,n)P_nT(n, s)\\
&=T(t,n)P_n T(n,s) =T(t,m)\cA(m,n)P_n T(n,s)\\
&=T(t,m)P_m\cA(m,n) T(n,s)=T(t,m)P_m T(m,t)T(t,s)\\
&=P(t)T(t,s).
\end{split}
\]
In particular, \eqref{Pro} holds. 

Moreover, the previous calculations 
also show that for $x\in \R^d$,
\[
\begin{split}
\|T(t,s)P(s)x \|_t &=\| T(t,m)\cA(m,n)P_n T(n,s)x\|_t\\
&\leq K \left(\frac{t}{m}\right)^a K\left(\frac{m}{n}\right)^{-\lambda} K\left(\frac{s}{n}\right)^a\|x\|_s     \\
&\le K^3 4^a \bigg (\frac m n\bigg )^{-\lambda}\|x\|_s 
\le K^34^a2^\lambda \bigg (\frac t s \bigg )^{-\lambda}\|x\|_s
\\
\end{split}
\]
by~\eqref{pd1sn} and~\eqref{bnd-norm},
which yields~\eqref{d1}. Similarly, one can establish~\eqref{d2}. Noting that \eqref{bnd-norm} is satisfied by the hypothesis, it follows that \eqref{LDE} admits a strong polynomial dichotomy with respect to the family of norms $\mathcal{S}$ as claimed. This completes the proof.
\end{proof}

\subsection{Polynomial dichotomy spectrum} Given a family of norms $\mathcal{S}=\{\| \cdot \|_t; \ t\ge 1 \}$ on $\R^d$,
 let us consider the set $\Sigma_{PD, A(\cdot),\mathcal{S}}$ which consists of all $\tau\in \R$ such that the equation
\begin{equation}\label{LDE2}
x'=\bigg (A(t)-\frac \tau t \Id \bigg)x, \quad t\ge 1
\end{equation}
does not admit a strong polynomial dichotomy with respect to the family of norms $\mathcal{S}$. 
\begin{remark}
Observing that the evolution family of~\eqref{LDE2} is given by
\[
T_\tau(t,s)=\bigg (\frac t s\bigg )^{-\tau}T(t,s),
\]
we have that 
\[
T_\tau(n+1,n)=\bigg (\frac{n+1}{n}\bigg)^{-\tau}T(n+1,n), \quad n\in \N.
\]
 Using this together with Proposition  \ref{prop: equiv pol dich contXdisc},  we see that if $T(t,s)$ satisfies \eqref{bnd-norm} with respect to the family of norms $\mathcal{S}=\{\| \cdot \|_t; \ t\ge 1 \}$, then the following properties are equivalent:
\begin{itemize}
\item the sequence $((\frac{n+1}{n})^{-\tau}T(n+1,n))_{n\in \N}$ does not admit a strong polynomial dichotomy with respect to the sequence of norms $\{\| \cdot \|_n; \ n\in \N\}$;
\item \eqref{LDE2} does not admit a strong polynomial dichotomy with respect to the family of norms $\mathcal{S}$.
\end{itemize}
\end{remark}

The following result is a direct consequence of the above remark.

\begin{corollary}\label{cor: equal spectrum cont}
Let $\mathcal{S}=\{\| \cdot \|_t; \ t\ge 1 \}$ be a family of norms on $\R^d$
 and consider $\mathcal{S}'=\{\| \cdot \|_n; \ n\in \N\}$. Assume that there exist $K, a>0$ such that~\eqref{bnd-norm} holds. Then,
\[
\Sigma_{PD, A(\cdot), \mathcal{S}}=\Sigma_{PD, \mathbb A,\mathcal{S}'},
\]
where the sequence $\mathbb A=(A_n)_{n\in \N}$ is given by~\eqref{AN}. In particular, $\Sigma_{PD, A(\cdot),\mathcal{S}}$ is given by~\eqref{PDA} with $1\leq r\le d$ and $0<a_1\leq b_1<a_2\leq \ldots < a_r\leq b_r$.
\end{corollary}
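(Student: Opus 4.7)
The plan is to reduce the corollary directly to the discrete-time theory already developed. The key observation is that the statement is really a two-part claim: (i) the continuous and discrete polynomial spectra agree, and (ii) the common spectrum has the stated structural form.

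For part (i), I would simply invoke the remark immediately preceding the corollary. That remark asserts, under the hypothesis \eqref{bnd-norm}, the pointwise equivalence: for each $\tau\in\R$, the shifted sequence $\bigl((\tfrac{n+1}{n})^{-\tau}T(n+1,n)\bigr)_{n\in\N}$ fails to admit a strong polynomial dichotomy with respect to $\mathcal{S}'$ if and only if \eqref{LDE2} fails to admit a strong polynomial dichotomy with respect to $\mathcal{S}$. By the definitions of $\Sigma_{PD, \mathbb A,\mathcal{S}'}$ (Definition \ref{def-01}, applied to $\mathbb A$) and $\Sigma_{PD, A(\cdot),\mathcal{S}}$, this is literally the statement $\tau\in\Sigma_{PD,\mathbb A,\mathcal{S}'}\Leftrightarrow\tau\in\Sigma_{PD,A(\cdot),\mathcal{S}}$, giving equality of the two sets.

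For part (ii), I would apply Corollary \ref{KOR1} to the sequence $\mathbb A=(A_n)_{n\in\N}$ with norms $\mathcal{S}'=\{\|\cdot\|_n;\, n\in\N\}$. The only nontrivial thing to check is that $\mathbb A$ together with $\mathcal{S}'$ satisfies the growth hypothesis \eqref{bg1sn} required by that corollary. But since $\cA(m,n)=T(m,n)$ for $m,n\in\N$ (by the cocycle property and the definition $A_n=T(n+1,n)$), the bound \eqref{bnd-norm} of the present hypothesis, restricted to integer arguments, gives exactly \eqref{bg1sn} with the same constants $K$ and $a$. Thus Corollary \ref{KOR1} yields $1\le r\le d$ and $0<a_1\le b_1<\ldots<a_r\le b_r$ with $\Sigma_{PD,\mathbb A,\mathcal{S}'}=\bigcup_{i=1}^r\bigl[\tfrac{\log a_i}{\log 2},\tfrac{\log b_i}{\log 2}\bigr]$, which via part (i) is the asserted expression for $\Sigma_{PD, A(\cdot),\mathcal{S}}$.

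There is really no substantive obstacle: the entire argument is a transport of results from the discrete setting, and all the work has been done in the remark (which in turn relies on Proposition \ref{prop: equiv pol dich contXdisc}) and in Corollary \ref{KOR1}. If anything deserves care, it is only making explicit that \eqref{bnd-norm} at integer times yields \eqref{bg1sn}, so that the hypotheses of Corollary \ref{KOR1} are legitimately in force.
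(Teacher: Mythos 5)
Your proof is correct and takes essentially the same approach as the paper: the paper simply declares the corollary to be a direct consequence of the preceding remark, and you have spelled out why (part (i)) together with the application of Corollary \ref{KOR1} needed for the structural claim (part (ii)), including the routine but worthwhile check that \eqref{bnd-norm} restricted to integer times yields \eqref{bg1sn}.
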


\subsection{Smooth linearization} The following is a version of Theorem \ref{theo: main linearization discrete} in the continuous time setting.
\begin{theorem}\label{theo: linarization continuous time pol}
Let~\eqref{LDE} admit a nonuniform strong polynomial dichotomy and let $\mathcal{S}=\{\| \cdot \|_t; \ t\ge 1\}$ be a family of norms given by Proposition {\rm \ref{prop: equiv nonunifXnorms cont}}. Assume that $\Sigma_{PD, A(\cdot),\mathcal{S}}$ is given by~\eqref{PDA}, where $a_i, b_i$ satisfy~\eqref{sp1} and~\eqref{sp2}.
 Furthermore, suppose that $f\colon [1, \infty)\times \R^d \to \R^d$ is a $C^1$ map satisfying the following conditions:
\begin{itemize}
\item for $t\ge 1$,
\begin{equation}\label{1950}
f(t,0)=0 \quad \text{and} \quad D_x f(t,0)=0;
\end{equation}
\item there exists $\eta >0$ such that 
\begin{equation}\label{1951}
\| D_x f(t,x)\| \le \frac{\eta}{t^{1+4\varepsilon}}, \quad \text{for $x\in \R^d$;}
\end{equation}
\item there exists $L>0$ such that 
\begin{equation}\label{1952}
\|D_x f(t,x)-D_x f(t, y)\| \le \frac{L}{t^{1+5\varepsilon}} \|x-y\|, \quad \text{for $x, y\in \R^d$,}
\end{equation}
where $\varepsilon\geq 0$ is given in Definition {\rm \ref{def: nonunif pol dich continuous}}.
\end{itemize}
Then, provided that $\eta$ is sufficiently small,  there exist $C^1$ maps $H, G\colon [1, \infty) \times \R^d \to \R^d$ with the following properties:
\begin{itemize}
\item[i)] if $t\mapsto x(t)$ is a solution of~\eqref{NDE}, then $t\mapsto H(t, x(t))$ is a solution of~\eqref{LDE};
\item[ii)] if $t\mapsto x(t)$ is a solution of~\eqref{LDE}, then $t\mapsto G(t, x(t))$ is a solution of~\eqref{NDE};
\item[iii)] for $t\ge 1$ and $x\in \R^d$, 
\[
H(t, G(t,x))=x \quad \text{and} \quad G(t, H(t,x))=x;
\]
\item[iv)] there exist $R, \zeta>0$ such that 
\[
\|D_xH(t,x)\|\leq Rt^{4\varepsilon} \text{ and } \|D_xG(t,x)\| \leq Rt^{4\varepsilon}, 
\]
for every $t\geq 1$ and $x\in \R^d$ satisfying $\|x\|\leq \frac{\zeta}{t^{3\varepsilon}}$.
\end{itemize}
\end{theorem}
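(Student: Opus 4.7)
The plan is to reduce the continuous-time problem to the discrete-time result, Theorem \ref{theo: main linearization discrete}, by considering the time-one maps of both the linear and nonlinear flows. Let $\Phi(t,s,\cdot)$ denote the evolution operator of \eqref{NDE}, and set
\[
A_n := T(n+1, n), \qquad g_n(x) := \Phi(n+1, n, x) - A_n x, \qquad n \in \N.
\]
By Proposition \ref{prop: equiv pol dich contXdisc} and Corollary \ref{cor: equal spectrum cont}, together with Proposition \ref{prop: equiv nonunifXnorms cont}, the sequence $\mathbb A = (A_n)_{n \in \N}$ admits a nonuniform strong polynomial dichotomy, with $\Sigma_{PD, \mathbb A, \mathcal S'} = \Sigma_{PD, A(\cdot), \mathcal S}$ for $\mathcal S' = \{\|\cdot\|_n; n \in \N\}$; hence the spectral-gap and spectral-band assumptions \eqref{sp1}--\eqref{sp2} carry over to the discretization.

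Next I would verify that the sequence $(g_n)$ satisfies the hypotheses \eqref{zero}--\eqref{non2} of Theorem \ref{theo: main linearization discrete}. Using the variation of constants formula,
\[
g_n(x) = \int_n^{n+1} T(n+1, s)\, f(s, \Phi(s, n, x))\, ds,
\]
and $f(s,0)=0$ with $D_x f(s,0)=0$ yields \eqref{zero} immediately. For \eqref{non1}--\eqref{non2}, differentiate in $x$ and apply \eqref{1951}, \eqref{1952} together with \eqref{bnd} to $T(n+1,s)$ and the Gronwall bound on $\|D_x\Phi(s,n,x)\|$ derived from the nonlinear variational equation. The extra powers $t^{1+4\varepsilon}$ and $t^{1+5\varepsilon}$ in \eqref{1951}--\eqref{1952} are precisely what is needed to absorb the three polynomial-growth factors (from $T$, from $D_x\Phi$, and the conversion between $\|\cdot\|$ and $\|\cdot\|_n$) and produce a bound of the form $c/(n+1)^{1+2\varepsilon}$ and $L/(n+1)^{1+2\varepsilon}$ after integration over $[n,n+1]$. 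The smallness of $\eta$ translates into smallness of $c$.

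Theorem \ref{theo: main linearization discrete} then yields $C^1$-diffeomorphisms $\psi_n : \R^d \to \R^d$ conjugating $A_n + g_n$ to $A_n$, with the bounds \eqref{EE}. I would define the continuous-time conjugacy on each slab $t \in [n, n+1)$ by
\[
H(t,x) := T(t, n)\, \psi_n\bigl(\Phi(n, t, x)\bigr), \qquad G(t,x) := \Phi(t, n)\bigl(\psi_n^{-1}(T(n,t)x)\bigr),
\]
(extended to $t=n+1$ by right-limit). The conjugacy equation $\psi_{n+1}\circ (A_n + g_n) = A_n \circ \psi_n$ is exactly what ensures that $H$ is well-defined (continuous, and in fact $C^1$) across integer times; it also directly yields (i), (ii) and (iii). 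For (iv), the chain rule gives
\[
D_xH(t,x) = T(t,n)\, D\psi_n(\Phi(n,t,x))\, D_x\Phi(n,t,x),
\]
so combining the $C^1$ bound \eqref{EE} on $D\psi_n$ with $\|T(t,n)\| \leq K 2^a n^\varepsilon$ for $t \in [n,n+1)$ and the variational estimate on $D_x\Phi(n,t,x)$ produces a bound of the form $R t^{4\varepsilon}$ on a neighbourhood $\|x\| \leq \zeta / t^{3\varepsilon}$, once one ensures via Gronwall that $\Phi(n,t,x)$ lies in the neighbourhood where \eqref{EE} applies.

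The main obstacle I anticipate is the careful bookkeeping in the verification of \eqref{non1}--\eqref{non2} for $g_n$: one must simultaneously use the nonuniform bound \eqref{bnd} on $T$, a Gronwall estimate controlling $\|D_x\Phi(s,n,x)\|$ by something like $(s/n)^{a'} n^{\varepsilon'}$, and the conversion \eqref{ln} between $\|\cdot\|$ and $\|\cdot\|_n$, while still landing inside the $1/(n+1)^{1+2\varepsilon}$ regime. The hypotheses \eqref{1951}--\eqref{1952} are tuned exactly so that this budget balances, but one has to be scrupulous about where each power of $\varepsilon$ is spent. Once those estimates are in place, the remaining arguments (including the analogous bound for $D_xG$ and the dual H\"older/differentiable statement extending Theorem \ref{theo: diff+holder linearization}) are essentially routine.
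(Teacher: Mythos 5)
Your proposal is correct and follows essentially the same route as the paper: discretize by $A_n = T(n+1,n)$ and $g_n(x) = \varphi(n+1,n;x) - A_nx$, transfer the dichotomy and spectrum via Propositions \ref{prop: equiv nonunifXnorms cont}, \ref{prop: equiv pol dich contXdisc} and Corollary \ref{cor: equal spectrum cont}, verify \eqref{zero}--\eqref{non2} via variation of constants and Gronwall bounds on $D_x\varphi$, apply Theorem \ref{theo: main linearization discrete}, and then define $H$ and $G$ slab-wise exactly as you wrote. The only small inaccuracy is that the norm conversion \eqref{ln} is not actually needed when verifying \eqref{non1}--\eqref{non2}, since those conditions are stated in the Euclidean norm; the budget is balanced by the factors $s^\varepsilon$ from $\|T(n+1,s)\|$ and $n^\varepsilon$ from the Gronwall bound $\|D_x\varphi(s,n;x)\| \le \hat M n^\varepsilon$ alone.
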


\begin{proof} The general idea of the proof consists of using the discretization of \eqref{LDE} given by \eqref{AN} in order to obtain Theorem \ref{theo: linarization continuous time pol} as a consequence of Theorem \ref{theo: main linearization discrete}.

Let $(A_n)_{n\in \N}$ be the sequence of operators on $\R^d$ given by~\eqref{AN}. Combining our assumptions with Proposition \ref{prop: equiv pol dich contXdisc} and Corollary \ref{cor: equal spectrum cont}, we understand that $(A_n)_{n\in \N}$ satisfies the hypothesis of Theorem \ref{theo: main linearization discrete}. Let $t\mapsto \varphi (t, t_0; x_0)$ denote the solution of~\eqref{NDE} satisfying $x(t_0)=x_0$. Hence, by the variation of constants formula, we have that 
\begin{align}\label{def-phi}
\phi(t, n; x)=T(t,n)x+\int_n^t T(t,r)f(r, \phi(r, n;x))\, dr.
\end{align}
For $n\in \N$ and $x\in \R^d$, set 
\begin{equation}\label{eq: def gn cont case}
g_n(x)=\phi(n+1, n;x)-A_n x=\int_n^{n+1}T(n+1, r)f(r, \phi(r,n;x))\, dr.
\end{equation}
We will now verify that this sequence of $C^1$ maps $g_n:\mathbb{R}^d\to \mathbb{R}^d$, $n\in \N$ satisfies all assumptions of Theorem~\ref{theo: main linearization discrete}.

It follows from~\eqref{1950} that  $\phi(t, t_0;0)=0$, and thus $g_n(0)=0$ for $n\in \N$. Moreover, 
\[
\begin{split}
Dg_n(0)
&=\int_n^{n+1}T(n+1, r)D_xf(r, 0)D_x \phi(r,n;0)\, dr =0,
\end{split}
\]
for $n\in \N$. We conclude that~\eqref{zero} holds.

 Observe that 
\begin{equation}\label{dphi}
D_x\phi(t, n; x)=T(t,n)+\int_n^t T(t,r)D_xf(r, \phi(r, n;x))D_x\phi(r, n;x)\, dr
\end{equation}
for $t\ge n$ and $x\in \R^d$.  In particular, from~\eqref{bnd} and~\eqref{1951} we obtain that for every $t\in [n, n+1]$ and $x\in \R^d$,
\[
\begin{split}
\lVert D_x\phi(t, n; x) \rVert&\le K\bigg (\frac tn \bigg )^a n^\varepsilon + \int_n^t K \bigg (\frac t r\bigg )^ar^\varepsilon \frac{\eta}{r^{1+4\varepsilon}} \lVert D_x\phi(r, n;x)\rVert\, dr
\\
&\le K2^an^\varepsilon +K\eta 2^a \int_n^t  \lVert D_x\phi(r, n;x)\rVert\, dr.
\end{split}
\]
 Hence, it follows from Gronwall's lemma that 
$
\lVert D_x\phi(t, n; x) \rVert \le K2^an^\varepsilon e^{\int_n^tK\eta 2^a \, dr},
$
and therefore 
\begin{equation}\label{910x}
\lVert D_x\phi(t, n; x) \rVert \le \hat M n^\varepsilon
\end{equation}
for every $t\in [n, n+1]$ and $x\in \R^d$, where
$
\hat M:=K2^ae^{K\eta 2^a}.
$
On the other hand, note that
\begin{equation}\label{Dfm}
Dg_n(x)=\int_n^{n+1}T(n+1, r)D_xf(r, \phi(r, n;x))D_x\phi(r, n;x)\, dr.
\end{equation}
Then, combining \eqref{bnd}, \eqref{1951}, \eqref{910x} with \eqref{Dfm} we get
\[
\lVert Dg_n(x) \rVert \le \int_n^{n+1}K \bigg (\frac{n+1}{r} \bigg )^a r^\varepsilon \frac{\eta}{r^{1+4\varepsilon}} \hat M n^\varepsilon \, dr \le \frac{K\hat M2^{a+1+2\varepsilon}\eta }{(n+1)^{1+2\varepsilon}},
\]
for each $n\in \N$ and $x\in \R^d$. We conclude that \eqref{non1} holds with $c:=K\hat M2^{a+1+2\varepsilon}\eta >0$, which can be sufficiently small by taking $\eta$ small enough.

Next, we observe that~\eqref{dphi} implies that 
\[
\begin{split}
&D_x\phi(t, n;x)-D_x\phi(t, n; y)
\\
&=\int_n^t T(t,r)D_xf(r, \phi(r, n;x))D_x\phi(r, n;x)\, dr \\
&\phantom{=}-\int_n^t T(t,r)D_xf(r, \phi(r, n;y))D_x\phi(r, n;y)\, dr \\
&=\int_n^t T(t,r)D_xf(r, \phi(r, n;x))(D_x\phi(r, n;x)-D_x\phi(r, n;y))\, dr \\
&\phantom{=}+\int_n^t T(t,r)(D_xf(r, \phi(r, n;x))-D_xf(r, \phi(r, n;y)))D_x\phi(r, n;y)\, dr.
\end{split}
\]
Hence, it follows from~\eqref{bnd}, \eqref{1951}, \eqref{1952} and~
\eqref{910x} (together with the
mean-value theorem) that
\[
\begin{split}
&\lVert D_x\phi(t, n;x)-D_x\phi(t, n; y)  \rVert
\\
&\le \int_n^t K \bigg (\frac  t r\bigg )^a r^\varepsilon\frac{L}{r^{1+5\varepsilon}} \lVert \phi(r, n;x)-\phi(r, n;y)\rVert \hat M n^\varepsilon \, dr\\
&\phantom{\le}+\int_n^t K \bigg (\frac  t r\bigg )^ar^\varepsilon \frac{\eta}{r^{1+4\varepsilon}} \lVert D_x\phi(r, n;x)-D_x\phi(r, n;y)\rVert \, dr \\
&\le KL2^a\hat M^2\|x-y\|+K2^a \eta \int_n^t\lVert D_x\phi(r, n;x)-D_x\phi(r, n;y)\rVert \, dr,
\end{split}
\]
for $t\in [n, n+1]$ and $x, y\in \R^d$.   By Gronwall's inequality again, one can conclude that there exists $\check M>0$ such that
\begin{equation}\label{Df-Df}
\lVert D_x\phi(t, n;x)-D_x\phi(t, n; y)  \rVert  \le  \check M\lVert x-y\rVert,
\end{equation}
for $n\in \N$, $t\in [n, n+1]$ and $x, y\in \R^d$.

Moreover, since
\[
\begin{split}
&Dg_{n}(x)-Dg_{n}(y)
\\
&=\int_n^{n+1}T(n+1, r)D_xf(r, \phi(r, n;x))D_x\phi(r, n;x)\, dr \\
&\phantom{=}-\int_n^{n+1}T(n+1, r)D_xf(r, \phi(r, n;y))D_x\phi(r, n;y)\, dr \\
&=\int_n^{n+1}T(n+1, r)D_xf(r, \phi(r, n;x))(D_x\phi(r, n;x)-D_x\phi(r, n;y))\, dr \\
&\phantom{=}+\int_n^{n\!+\!1}T(n+1, r)(D_xf(r, \phi(r, n;x))
\!-\!D_xf(r, \phi(r, n;y)))D_x\phi(r, n;y)\, dr, \\
\end{split}
\]
we obtain from \eqref{1951}, \eqref{1952},  \eqref{910x} and \eqref{Df-Df} (together with the mean-value
theorem) that
\[
\begin{split}
&\lVert Dg_{n}(x)-Dg_{n}(y)  \rVert
\\
&\le \int_n^{n+1}K \bigg (\frac{n+1}{r} \bigg )^ar^\varepsilon \frac{\eta}{r^{1+4\varepsilon}} \check M\lVert x-y\rVert\, dr \\
&\phantom{\le}+\int_n^{n+1}K \bigg (\frac{n+1}{r} \bigg )^a r^\varepsilon \frac{L}{r^{1+5\varepsilon}} \hat M n^\epsilon \lVert x-y\rVert \hat M n^\varepsilon \, dr \\
&\le \frac{K\check M 2^{a+1+2\varepsilon}\eta}{(n+1)^{1+2\varepsilon}} \|x-y\| +\frac{K L \hat M^2 2^{a+1+2\varepsilon} }{(n+1)^{1+2\varepsilon}} \|x-y\|,\\
\end{split}
\]
for $n\in \N$ and $x, y\in \R^d$. Then, we conclude that \eqref{non2} holds, and therefore all the hypothesis of Theorem \ref{theo: main linearization discrete} are satisfied for $\mathbb{A}=(A_n)_{n\in \N}$ and $(g_n)_{n\in \N}$ defined above. 

Thus, provided that $\eta$ is sufficiently small, by Theorem \ref{theo: main linearization discrete} there exists  a sequence $(\psi_n)_{n\in \N}$ of $C^1$-diffeomorphisms $\psi_n \colon \R^d \to \R^d$ satisfying~\eqref{E} and~\eqref{EE} (for some $\tilde{M}, \tilde{\rho} >0$).
We now set 
\begin{equation}\label{eq: def of H and G}
    H(t,x)=T(t,n) \psi_n (\phi(n, t;x)), \quad  G(t,x)=\phi(t, n; \psi_n^{-1}(T(n,t)x)),
\end{equation}
for $n\in \N$, $t\in [n, n+1)$ and $x\in \R^d$. In order to verify that $H(t,x)$ and $G(t,x)$ satisfy properties i) and ii) of our statement, we see from \eqref{eq: def gn cont case} that 
\[
\varphi(n+1,n;x)=A_nx+g_n(x)
\]
for any $n\in \N$ and $x\in \R^d$.
Combining this fact with \eqref{E} we get that
\[ 
\begin{split}
H(t,\varphi(t,1;x))&=T(t,n) \psi_n (\phi(n, t;\varphi(t,1;x)))\\
&=T(t,n) \psi_n (\phi(n, 1;x)))\\
&=T(t,n) \psi_n (\phi(n,n-1; \varphi(n-1,1;x)))\\
&=T(t,n) \psi_n ((A_{n-1}+g_{n-1})\circ(\varphi(n-1,1;x)))\\
&=T(t,n) A_{n-1} \circ \psi_{n-1} (\varphi(n-1,1;x)))\\
&=T(t,n-1) \psi_{n-1} (\varphi(n-1,1;x))),\\
\end{split}
\]
for every $t\in[n,n+1)$ and  $n>1$. In the above formula we note that 
$$
T(t,n) \psi_n (\phi(n, 1;x)))=
T(t,n-1) \psi_{n-1} (\varphi(n-1,1;x)).
$$
Then, proceeding recursively we can conclude that 
\[
H(t,\varphi(t,1;x))=T(t,1) \psi_{1} (x) 
\]
for $t\geq 1$. In particular, $H(t,\varphi(t,1;x))$ is a solution of \eqref{LDE} proving i). Similarly, we can prove that $G$ satisfies property ii) of our statement. 

Now, observing that
\[
\begin{split}
H(t, G(t, x)) &=T(t, n)\psi_n(\phi(n, t; G(t,x))) \\
&=T(t,n)\psi_n (\phi(n, t; \phi(t, n;\psi_n^{-1}(T(n, t)x)) ) \\
&=T(t,n)\psi_n (\psi_n^{-1}(T(n,t)x)) \\
&=T(t,n)T(n, t)x =x,
\end{split}
\]
for each $x\in \R^d$, $t\in [n, n+1)$ and $n\in \N$, we get that $H(t, G(t, x))=x$ for every $t\geq 1$ and $x\in \R^d$. Similarly, we can show that $G(t, H(t, x))=x$ for every $t\geq 1$ and $x\in \R^d$. Consequently, conclusion iii) of our statement holds true. It remains to show that iv) is satisfied. 

Proceeding as we did to obtain~\eqref{910x}, we find that there exists $\hat M>0$ such that
\begin{equation}\label{Dx-Inv}
\|D_x\phi(n,t;x) \| \le  \hat M n^\varepsilon 
\end{equation}
for $n\in \N$, $t\in [n, n+1)$ and $x\in \R^d$. In particular, \eqref{Dx-Inv} implies that
\begin{equation}\label{bx1}
\|\phi(n,t;x) \| \le \hat M n^\epsilon \|x\|.
\end{equation}
Therefore, if $\|x\|\leq \frac{\tilde \rho}{\hat M t^{3\varepsilon}}$ we get that $\|\phi(n,t;x) \| \leq \frac{\tilde{\rho}}{n^{2\varepsilon}}$ for every $t\in[n,n+1)$ and $n\in \N$. Since for every
$t\ge 1$ there is an $n\in\mathbb{N}$ such that $t\in [n,n+1)$, combining these observations with \eqref{EE}, \eqref{bnd} and the definition of $H$, we get that 
\[
\begin{split}
\|D_xH(t,x)\|&\leq\|T(t,n)\| \|D_x\psi_n (\phi(n, t;x))\|\|D_x\phi(n, t;x)\|\\
&\leq K2^an^\varepsilon \tilde{M}n^{2\varepsilon} \hat Mn^\varepsilon
\leq 2^aK\tilde{M}\hat M t^{4\varepsilon},
\end{split}
\]
whenever $\|x\|\leq \frac{\tilde \rho}{\hat M t^{3\varepsilon}}$. Similarly,
\[
\begin{split}
\|D_xG(t,x)\|&\leq 2^aK\tilde{M}\hat M t^{4\varepsilon},\\
\end{split}
\]
for every $t\geq 1$ whenever $\|x\|\leq \frac{\tilde \rho}{\hat M t^{3\varepsilon}}$. Therefore, taking $R=2^aK \tilde{M}\hat M$ and $\zeta =\frac{\tilde{\rho}}{\hat M}$, we obtain iv). This concludes the proof of Theorem \ref{theo: linarization continuous time pol}.
\end{proof}

\begin{example} The following is a continuous time version of the Example \ref{example: main theo discrete}. Let $A,P:[1,+\infty)\to \R^{2\times 2}$ be given by
\[
A(t)=\begin{pmatrix}
-\frac{1}{t} & 0\\
0 & \frac{1}{t}
\end{pmatrix} \text{ and } P(t)=\begin{pmatrix}
1 & 0\\
0 & 0
\end{pmatrix}
\]
and consider the associated dynamical system given by \eqref{LDE}. Then, the evolution family associated to this system is
\[
T(t,s)=\begin{pmatrix}
\frac{s}{t} & 0\\
0 & \frac{t}{s}
\end{pmatrix} \text{ for every } t,s\geq 1.
\]
We can easily see that \eqref{LDE} admits a nonuniform strong polynomial dichotomy with constants $K=a=\lambda=1$, $\varepsilon=0$ and projections $P(t)$, $t\geq 1$. In fact, it admits a strong polynomial dichotomy with respect to the (constant) family of norms $\mathcal{S}=\{\|\cdot\|_t; \ t\ge 1\}$, where $\| \cdot \|_t=\| \cdot \|$ is the Euclidean norm. Moreover, the discretization of this system defined by \eqref{AN} is given precisely by $\mathbb{A}=(A_n)_{n\in \N}$ and $(P_n)_{n\in \N}$ from the Example \ref{example: main theo discrete}. Therefore, by Corollary \ref{cor: equal spectrum cont}, 
$$
\Sigma_{PD,A(\cdot),\mathcal{S}}=\Sigma_{PD,\mathbb{A},\mathcal{S}'}=\{-1,1\}
$$ 
and $r=2$, $a_1=b_1=\frac{1}{2}$ and $a_2=b_2=2$. Consequently, conditions \eqref{sp1} and \eqref{sp2} are satisfied. 

Considering $f:[1,+\infty)\times \R^2\to \R^2$ given by
\[f(t,(x_1,x_2))=\frac{\eta}{t+1}(\xi(x_1),\xi(x_2)),\]
where $\eta>0$ is a constant and $\xi$ is as in Example \ref{example: main theo discrete}, it follows that conditions \eqref{1950}, \eqref{1951} and \eqref{1952} are satisfied. In particular, Theorem \ref{theo: linarization continuous time pol} may be applied whenever $\eta>0$ is small enough. Moreover, as in the discrete time case, we can easily verify that \eqref{LDE} does not admit nonuniform strong exponential dichotomy.
\end{example}

\subsection{Differentiable and H\"older linearization} The following is a version of Theorem \ref{theo: diff+holder linearization} in the continuous time case. 

\begin{theorem}\label{theo: diff+holder linearization continuous}
Let \eqref{LDE}, $\mathcal{S}=\{\| \cdot \|_t; \ t\ge 1\}$ and $\Sigma_{PD, A(\cdot),\mathcal{S}}$ be as in Theorem {\rm\ref{theo: linarization continuous time pol}} such that \eqref{sp1} and~\eqref{sp3} are satisfied. Assume that $f\colon [1, \infty)\times \R^d \to \R^d$ is a $C^1$ map satisfying conditions \eqref{1950}-\eqref{1952}. Then, provided that $\eta$ is sufficiently small, there exist continuous maps $H, G\colon [1, \infty) \times \R^d \to \R^d$ satisfying properties i), ii) and iii) from Theorem {\rm \ref{theo: linarization continuous time pol}}.
Moreover, there exist constants $\tilde R,\tilde \zeta>0$ such that 
\begin{equation}\label{eq: H is diff 0 cont}
H(t,x)=x+t^{\varepsilon (4+3\varrho)}o(\|x\|^{1+\varrho}),\quad
G(t,x)=x+ t^{\varepsilon (4+3\varrho)}o(\|x\|^{1+ \varrho})
\end{equation}
as $\|x\|\to 0$ and
\begin{align}\label{eq: H and G are holder}
\begin{split}
&\|H(t,x)-H(t,y)\| \leq \tilde Rt^{\varepsilon(1+3\alpha_1)}\|x-y\|^{\alpha_1},
\\
&\|G(t,x)-G(t,y)\| \leq  \tilde R t^{\varepsilon(1+3\alpha_1)} \|x-y\|^{\alpha_1}
\end{split}
\end{align}
for $x,y\in\mathbb{R}^d$ satisfying $\|x\|,\|y\|\le \frac{\tilde \zeta}{t^{3\varepsilon}}$, where $\varrho$ and $\alpha_1$ are given in Theorem {\rm \ref{theo: main linearization discrete}}.
\end{theorem}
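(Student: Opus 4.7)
The strategy is to run exactly the discretization-and-lift procedure used for Theorem \ref{theo: linarization continuous time pol}, but to invoke Theorem \ref{theo: diff+holder linearization} in place of Theorem \ref{theo: main linearization discrete} in order to supply the discrete conjugacies. First, I would set $A_n:=T(n+1,n)$ and $g_n(x):=\phi(n+1,n;x)-A_nx$ as in \eqref{AN} and \eqref{eq: def gn cont case}. The computations already carried out in the proof of Theorem \ref{theo: linarization continuous time pol} verify that $\mathbb A=(A_n)_{n\in \N}$ admits a nonuniform strong polynomial dichotomy, that by Corollary \ref{cor: equal spectrum cont} the polynomial dichotomy spectrum $\Sigma_{PD,\mathbb A,\mathcal S'}$ equals $\Sigma_{PD,A(\cdot),\mathcal S}$ (so that \eqref{sp1} and \eqref{sp3} are inherited), and that $g_n$ fulfils \eqref{zero}, \eqref{non1} and \eqref{non2} with a constant $c$ that can be made arbitrarily small by taking $\eta$ small. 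Thus Theorem \ref{theo: diff+holder linearization} applies and produces homeomorphisms $\psi_n\colon \R^d\to \R^d$, $n\in\N$, satisfying \eqref{E}, \eqref{000} and \eqref{001}.

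Next I would define $H,G$ via formula \eqref{eq: def of H and G}, exactly as in the proof of Theorem \ref{theo: linarization continuous time pol}. Properties i)--iii) then follow from the same chain of identities used there, since those arguments rely only on the conjugacy relation \eqref{E}, the cocycle property of $T(t,s)$ and the group property of $\phi$, all of which hold in the present setting.

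The substantive work is in establishing the quantitative bounds \eqref{eq: H is diff 0 cont}--\eqref{eq: H and G are holder}. For the H\"older estimate, given $t\in[n,n+1)$ I would combine three ingredients: (a) $\|T(t,n)\|\le K2^a n^\varepsilon$ from \eqref{bnd}; (b) the H\"older bound \eqref{001} for $\psi_n$; and (c) a Lipschitz estimate $\|\phi(n,t;x)-\phi(n,t;y)\|\le \hat M n^\varepsilon\|x-y\|$ obtained by applying Gronwall's lemma to the backward integral equation for $\phi(\cdot,t;\cdot)$, using \eqref{bnd} and \eqref{1951}. Plugging (a)--(c) into \eqref{eq: def of H and G} accumulates a factor $n^\varepsilon\cdot n^{2\varepsilon\alpha_1}\cdot n^{\varepsilon\alpha_1}=n^{\varepsilon(1+3\alpha_1)}\le t^{\varepsilon(1+3\alpha_1)}$, yielding the first inequality in \eqref{eq: H and G are holder}; the bound for $G$ is analogous.

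For the differentiability at $0$, I would set $y:=\phi(n,t;x)$ and use the variation of constants formula $\phi(t,n;y)=x$ to decompose
\begin{equation*}
H(t,x)-x=T(t,n)\bigl(\psi_n(y)-y\bigr)-\int_n^t T(t,r)f(r,\phi(r,n;y))\,dr.
\end{equation*}
The first summand is controlled in the same spirit as the H\"older term: \eqref{bnd}, the bound $\|y\|\le \hat M n^\varepsilon\|x\|$, and \eqref{000} collect to the exponent $n^{\varepsilon+2\varepsilon(1+\varrho)+\varepsilon(1+\varrho)}=n^{\varepsilon(4+3\varrho)}$ and transfer the $o(\|x\|^{1+\varrho})$ behaviour as $\|x\|\to 0$. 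The second summand is super-quadratic: since \eqref{1950} and \eqref{1952} give $\|f(r,u)\|\le \tfrac{L}{2r^{1+5\varepsilon}}\|u\|^2$, a direct estimate combined with a Gronwall-type bound $\|\phi(r,n;y)\|\le Cn^\varepsilon\|y\|$ shows that this integral is of order $n^{-1}\|x\|^2=o(\|x\|^{1+\varrho})$, uniformly in $n$, and so is absorbed into the first term. The statement for $G$ is proved by symmetric reasoning. The main obstacle is purely bookkeeping: making sure that composing the three ``polynomial'' estimates indeed reproduces exactly the exponents $\varepsilon(1+3\alpha_1)$ and $\varepsilon(4+3\varrho)$ that appear in the statement, and that the remainder from the variation-of-constants decomposition does not spoil the advertised $t$-dependence.
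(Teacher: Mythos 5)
Your proposal is correct and follows essentially the same route as the paper: discretize via \eqref{AN} and \eqref{eq: def gn cont case}, invoke Theorem~\ref{theo: diff+holder linearization} in place of Theorem~\ref{theo: main linearization discrete}, define $H,G$ by \eqref{eq: def of H and G}, and then combine \eqref{bnd}, the Gronwall-based Lipschitz/growth estimates on $\phi$, and \eqref{000}--\eqref{001} to obtain \eqref{eq: H is diff 0 cont}--\eqref{eq: H and G are holder}. Your decomposition $H(t,x)-x=T(t,n)(\psi_n(y)-y)-\int_n^t T(t,r)f(r,\phi(r,n;y))\,dr$ with $y=\phi(n,t;x)$ is algebraically identical to the paper's splitting into $T(t,n)(\psi_n(y)-y)$ plus $T(t,n)(\phi(n,t;x)-T(n,t)x)$, and your exponent bookkeeping reproduces the stated powers.
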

\begin{proof}
We may proceed exactly as we did in the proof of Theorem \ref{theo: linarization continuous time pol}, but apply Theorem \ref{theo: diff+holder linearization} instead of apply Theorem \ref{theo: main linearization discrete} in this proof. 

Using the expressions for $H$ and $G$ given in \eqref{eq: def of H and G} and combining \eqref{001}, \eqref{bnd}, \eqref{910x} with \eqref{Dx-Inv}, we obtain \eqref{eq: H and G are holder}. We now show how to establish the first estimate in \eqref{eq: H is diff 0 cont}. The second one is similar. Using \eqref{000}, \eqref{bnd} and \eqref{bx1} we get that for every $n\in \N$ and $t \in [n,n+1)$,
\begin{align}
\lVert H(t,x)-x\rVert
&=\lVert T(t,n)\psi_n(\varphi(n,t;x))-x\rVert
\nonumber\\
&\le
\lVert T(t,n)\psi_n(\varphi(n,t;x)) -T(t,n)\varphi(n,t;x)\rVert
\nonumber\\
& 
\phantom{\le}+\lVert T(t,n)\varphi(n,t;x)-T(t,n)T(n,t)x\rVert
\nonumber\\
&\le
K2^at^{\varepsilon} t^{2\varepsilon (1+\varrho)} o(\lVert \varphi(n, t;x)\rVert^{1+ \varrho})
+K2^at^{\varepsilon} \lVert \phi(n,t;x)-T(n,t)x\rVert
\nonumber\\
&\le
t^{\varepsilon +3\varepsilon (1+\varrho)}o(\lVert x\rVert^{1+ \varrho})
+K2^at^{\varepsilon}\lVert \phi(n,t;x)-T(n,t)x\rVert
\nonumber
\end{align}
as $\|x\|\to 0$. Moreover, it follows by \eqref{1950}, \eqref{1952}, \eqref{def-phi} and \eqref{bx1} that for every $n\in \N$ and $t \in [n,n+1)$,
\[
\begin{split}
\lVert \varphi(n,t;x)-T(n,t)x\rVert
&\le \int_n^{n+1}\lVert T(n,s)f(s, \varphi(n,s;x))\rVert\, ds
\\
&\le \int_n^{n+1} K2^a s^{\varepsilon}  \sup_{\theta\in (0,1)}\|D_xf(s, \theta \varphi(n,s;x))\|\|\varphi(n,s;x)\|\,ds
\\
&\le \int_n^{n+1} K2^a s^{\varepsilon} \frac{L}{s^{1+5\varepsilon}}
\|\varphi(n,s; x)\|^2 ds
\\
&\le \int_n^{n+1}K2^a \frac{L}{s^{1+4\varepsilon}} \hat M ^2s^{2\varepsilon} \|x\|^2 ds
\le K2^aL \hat M ^2 \lVert x\rVert^2.
\end{split}
\]
Combining these observations we get that
\[
\begin{split}
H(t,x)&=x+t^{\varepsilon +3\varepsilon (1+\varrho)} o(\lVert x\rVert^{1+ \varrho})+t^\varepsilon O(\|x\|^2)=x+ t^{\varepsilon (4+3\varrho)}o(\lVert x\rVert^{1+ \varrho})
\end{split}
\]
as $\|x\|\to 0$. This concludes the proof of the theorem.
\end{proof}

\section{Appendix A}\label{sec: appendix a}

Let $\mathbb B=(B_n)_{n\in \Z}$ be a sequence of invertible operators on $\R^d$ that  admits a strong exponential dichotomy with respect to a sequence of norms $\mathcal S':=\{ \|\cdot \|_n; \ n\in \Z\}$. 
Set
\[
Y_\infty:=\bigg \{ \mathbf x=(x_n)_{n\in \Z} \subset \R^d: \|\mathbf x\|_\infty:=\sup_{n\in \Z} \|x_n\|_n <+\infty \bigg \}.
\]
Then, $(Y_\infty, \| \cdot \|_\infty)$ is a Banach space. We further define a linear operator ${B}^*\colon Y_\infty \to Y_\infty$  by
\[
({B}^* \mathbf x)_n:=B_{n-1}x_{n-1}, \quad \forall n\in \Z, \quad \forall \mathbf x=(x_n)_{n\in \Z}\in Y_\infty.
\]
Define the spectrum
$$
\sigma({B}^*):=\{\varrho\in \mathbb C: \varrho\Id-{B}^* ~\mbox{is not invertible on $Y_\infty$}\}.
$$
By \cite[Lemma 2]{DZZ}, we see that $\varrho\in \sigma({B}^*)$ (actually considering the complexification of ${B}^*$ and $Y_\infty$)
if and only if $\left(\frac{1}{|\varrho|}B_n\right)_{n\in \Z}$ does not admit a strong exponential dichotomy with respect to the sequence of norms $\mathcal S'$. This fact shows (see Definition~\ref{def-SED} and Remark~\ref{aaa}) that
$$
|\sigma({B}^*)|=\Sigma_{ED, \mathbb B, \mathcal S'},
$$
where 
\[
|\sigma({B}^*)|=\{ |\lambda|; \ \lambda \in \sigma({B}^*)\}.
\]
Then, we have the following theorem.
\begin{theorem}\label{Ap}
Suppose that 
\begin{align*}
\Sigma_{ED, \mathbb B,  {\mathcal S}'}=\bigcup_{i=1}^r [a_i, b_i],
\end{align*}
where $a_i$ and $b_i$
satisfy \eqref{sp1}-\eqref{sp2}. Moreover, let $f_n\colon \R^d \to \R^d$ be a sequence of $C^1$ maps such that \eqref{fzero}-\eqref{fm} hold for $n\in\mathbb{Z}$.
Then, provided that $\eta$ is sufficiently small,  there exists a sequence $h_n\colon \R^d\to \R^d$ of $C^1$-diffeomorphisms such that
\begin{itemize}
\item for $n\in \Z$,
\begin{align}\label{con-hn}
h_{n+1} \circ (B_n+f_n)=B_n\circ h_n;
\end{align}
\item there exist $M, \rho>0$ such that
\begin{equation}\label{conj}
\|D h_n(v)z\|_n \le M\|z\|_n \quad \text{and} \quad \|Dh_n^{-1}(v)z\|_n \le M\|z\|_n, \quad \forall n\in \Z,
\end{equation}
for all  $z\in \R^d$ and $v\in\mathbb{R}^d$ such that $\|v\|_n\le \rho$.
\end{itemize}
\end{theorem}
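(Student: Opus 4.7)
The plan is to lift the problem to the Banach space $Y_\infty$ and apply \cite[Theorem 2]{DZZ} to the operator $B^*$ together with a suitable nonlinear perturbation. Define $F \colon Y_\infty \to Y_\infty$ by $(F\mathbf x)_n := f_{n-1}(x_{n-1})$. From \eqref{fzero} one reads off $F(0)=0$ and $DF(0)=0$, while \eqref{fm} together with the definition of $\|\cdot\|_\infty$ yields the global bounds $\|DF(\mathbf x)\|_{\mathcal L(Y_\infty)} \le \eta$ and $\|DF(\mathbf x) - DF(\mathbf y)\|_{\mathcal L(Y_\infty)} \le L\|\mathbf x-\mathbf y\|_\infty$. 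Consequently $B^* + F$ is a small $C^1$-perturbation of $B^*$ in the precise sense required by \cite{DZZ}.

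The second step is to translate the spectral hypotheses. By the identification $|\sigma(B^*)| = \Sigma_{ED, \mathbb B, \mathcal S'}$ recalled just before the statement of the theorem, the description $\Sigma_{ED, \mathbb B, \mathcal S'} = \bigcup_{i=1}^r [a_i, b_i]$ becomes a description of $\sigma(B^*)$ as a disjoint union of $r$ spectral annuli. Condition \eqref{sp1} ensures that the unit circle meets none of these annuli, so $B^*$ is hyperbolic on $Y_\infty$. The first inequality of \eqref{sp2} is precisely the spectral gap condition separating the contracting and expanding parts, while the second family of inequalities in \eqref{sp2} provides the spectral band (non-resonance) conditions on each individual annulus. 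These together constitute the hypotheses of \cite[Theorem 2]{DZZ}.

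Applying \cite[Theorem 2]{DZZ} to $B^*$ and $F$ then yields, for $\eta$ small enough, a $C^1$-diffeomorphism $H \colon Y_\infty \to Y_\infty$ with $H \circ (B^* + F) = B^* \circ H$ whose derivative and inverse-derivative are uniformly bounded in $\mathcal L(Y_\infty)$ on some ball around the origin of $Y_\infty$. The crucial fact to extract from the construction of $H$, rather than from the statement of \cite[Theorem 2]{DZZ}, is that $H$ respects the fibered structure: it has the form $(H\mathbf x)_n = h_n(x_n)$ for a family of $C^1$-diffeomorphisms $h_n \colon \R^d \to \R^d$ with $h_n(0) = 0$. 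This fiber-preservation is inherited from the fixed-point construction of $H$ in \cite{DZZ}: the nonlinearity $F$ couples the coordinate at index $n-1$ only to the coordinate at index $n$, so if $\mathbf x$ is supported at a single index $n$ then $(B^* + F)\mathbf x$ is supported at index $n+1$, and the contraction whose fixed point is $H$ preserves this structure. Granted this fibered form, evaluating $H \circ (B^* + F) = B^* \circ H$ at a sequence supported at a single index $n$ yields \eqref{con-hn}.

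The main obstacle, and in my view the reason this result is phrased as a strengthening rather than as a mere corollary of \cite{DZZ}, will be establishing the fiberwise bounds \eqref{conj} with the quantitative dependence on the reference norm $\|\cdot\|_n$. Denoting by $\delta_n^v \in Y_\infty$ the sequence equal to $v$ at index $n$ and zero elsewhere, one has $\|\delta_n^v\|_\infty = \|v\|_n$, so the fibered form of $H$ gives $\|Dh_n(v)z\|_n \le \|DH(\delta_n^v)\delta_n^z\|_\infty \le M\|z\|_n$ as soon as $\|v\|_n \le \rho$, and analogously for $h_n^{-1}$. The delicate point is that one must verify that the construction in \cite{DZZ} actually furnishes a single constant $M$ controlling $\|DH(\mathbf x)\|_{\mathcal L(Y_\infty)}$ uniformly in $\mathbf x$ near the origin of $Y_\infty$, which amounts to tracking the constants in the underlying contraction-mapping argument of \cite{DZZ} and, in particular, to observing that they depend only on the dichotomy data for $B^*$ and on $\eta, L$, and not on the fiber $n$ at which the estimate is evaluated.
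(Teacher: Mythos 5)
Your proposal matches the paper's proof in its essential structure: both lift to the Banach space $Y_\infty$, identify $|\sigma(B^*)|$ with $\Sigma_{ED,\mathbb B,\mathcal S'}$, invoke the $C^1$-linearization machinery of \cite{DZZ} to obtain a global conjugacy $\Phi$ on $Y_\infty$, and then read off the fiber maps $h_n$ by restricting to sequences supported at a single index. The first three paragraphs of your argument are correct and essentially coincide with the paper's proof (the paper bakes $B^*$ into the definition of $F$, but that is purely notational).

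The one place where you overstate the difficulty is the final paragraph. To obtain the uniform bound $M$ on $\|D\Phi(\mathbf x)\|_{\mathcal L(Y_\infty)}$ and $\|D\Phi^{-1}(\mathbf x)\|_{\mathcal L(Y_\infty)}$ near $\mathbf 0$, no constant-tracking through the contraction-mapping scheme of \cite{DZZ} is required: $\Phi$ is a $C^1$-diffeomorphism with $\Phi(\mathbf 0)=\mathbf 0$, so $D\Phi$ and $D\Phi^{-1}$ are continuous and hence bounded on some ball $\{\|\mathbf x\|_\infty\le\rho\}$. The paper uses exactly this soft argument. Because the norm on $Y_\infty$ is $\|\mathbf x\|_\infty=\sup_n\|x_n\|_n$, the resulting $M,\rho$ are automatically uniform in the fiber index $n$ once you take $\mathbf v^m$ and $\mathbf z^m$ supported at a single index and use the chain rule $Dh_m(v)z=(D\Phi(\mathbf v^m)\mathbf z^m)_m$, which is precisely the computation you sketch. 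So your argument does yield \eqref{conj}, just more easily than you anticipated.
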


\begin{remark}
This result is essentially established in~\cite[Theorem 2]{DZZ} except that the conclusion~\eqref{conj} was not explicitly written.  Notice that such $C^1$ conjugacy $h_n$ without satisfying \eqref{conj} always exists by a recursive construction.
\end{remark}

{\it Proof of Theorem} \ref{Ap}.
First of all, by the above discussion, we understand that
$$
|\sigma({B}^*)|=\Sigma_{ED, \mathbb B, \mathcal S'}
=\bigcup_{i=1}^r [a_i, b_i]
$$
Next, define $F\colon Y_\infty \to Y_\infty$ by
\[
(F(\mathbf x))_n:=B_{n-1}x_{n-1}+f_{n-1}(x_{n-1}), \quad \forall n\in \Z, \ \forall \mathbf x=(x_n)_{n\in \Z}\in Y_\infty.
\]
According to the proof of \cite[Theorem 2]{DZZ}, we get the following conclusions:
\begin{itemize}
\item $F\in C^{1,1}$ and $DF(\mathbf 0)=B^*$;
\item there exists $C>0$ such that $\| DF(\mathbf x)-B^*\| \le C\eta$ for $\mathbf x\in Y_\infty$.
\end{itemize}
Hence, provided that $\eta$ is sufficiently small, it follows from the $C^1$ linearization result given in~\cite[Appendix]{DZZ} that there exists a $C^1$-diffeomorphism  $\Phi \colon Y_\infty \to Y_\infty$ such that
\begin{equation}\label{P}
\Phi \circ F={B}^*\circ \Phi.
\end{equation}
For $v\in \R^d$ and $m\in \Z$, we set
\[
h_m(v):=(\Phi(\mathbf v^m))_m,\qquad \forall v\in \mathbb{R}^d,~\forall m\in\mathbb{Z},
\]
where $\mathbf v^m:=(v_n^m)_{n\in \Z}$ is given by $v_m^m=v$ and $v_n^m=0$ for $n\neq m$. Then, it is verified in the proof of~\cite[Theorem 2]{DZZ} that $h_m\colon \R^d \to \R^d$ are $C^1$-diffeomorphisms satisfying (\ref{con-hn}).
Moreover, we know that
\begin{equation}\label{DhDPhi}
  D h_m(v) z=(D\Phi(\mathbf v^m)\mathbf z^m)_m \quad \text{and} \quad Dh_m^{-1}(v)z=(D\Phi^{-1}(\mathbf v^m)\mathbf z^m)_m,
\end{equation}
for every $v, z\in \R^d$ and $m\in \Z$, 
where $\mathbf z^m$ is defined as $\mathbf v^m$, replacing $v$ by $z$.

In what follows, we prove \eqref{conj}.
It follows easily from~\eqref{P} that $\Phi(\mathbf 0)=\mathbf 0$.
Since $\Phi$ is $C^1$, there exists constants $M, \rho>0$ such that
\begin{equation}\label{P1}
\|D\Phi(\mathbf x)\| \le M \quad  \text{and} \quad  \|D\Phi^{-1}(\mathbf x)\|\le M, 
\end{equation}
for $\mathbf x\in Y_\infty$ satisfying $\|\mathbf x\|_\infty \le \rho$.
Then, we obtain from (\ref{DhDPhi}) and~\eqref{P1} that
$$
\|D h_m(v)z\|_m\le \|D\Phi(\mathbf v^m)\mathbf z^m\|_\infty
\le \|D\Phi(\mathbf v^m)\|\,\|\mathbf z^m\|_\infty
=M\|z\|_m,
$$
for $\|v\|_m \le \rho$ (which implies that $\|\mathbf v^m\|_\infty\le \rho$). Similarly,
\[
\|D h_m^{-1}(v)z\|_m\le M\|z\|_m,  \quad \text{for $\|v\|_m\le \rho$.}
\]
This proves (\ref{conj}) and the proof of the theorem is completed. $\hfill\Box$

\section{Appendix B}\label{A-B}
We now provide the proof of Proposition~\ref{22}. We stress that the proof is rather standard (see for example the proofs  of~\cite[Theorem 3.4]{AS} and~\cite[Theorem 5]{BDV}) and that it is based on the ideas developed in~\cite{SS-JDE78}. However, we provide complete details for the sake of completeness.

We will begin by establishing several auxiliary result.
\begin{lemma} \label{l1}
Let $(A_n)_{n\in \Z^+}$ be a sequence of invertible operators on $\R^d$ that admits a strong exponential dichotomy with respect to a sequence of norms $\| \cdot \|_n$, $n\in \Z^+$ and projections $P_n$, $n\in \Z^+$. Then,
\[
\Ima P_n=\bigg \{v\in \R^d: \sup_{m\ge n} \|\cA(m,n)v\|_m <+\infty \bigg \},
\]
for each $n\in  \Z^+$.
\end{lemma}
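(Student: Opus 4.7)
The plan is to prove the equality of two sets by establishing both inclusions, using the two estimates~\eqref{ed1sn} defining the exponential dichotomy with respect to the sequence of norms, together with the invariance property~\eqref{pro}.

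For the inclusion $\Ima P_n \subseteq \{v : \sup_{m\ge n}\|\cA(m,n)v\|_m <+\infty\}$, I would take $v\in \Ima P_n$, so that $P_n v=v$, and apply the first estimate in~\eqref{ed1sn} to get
\[
\|\cA(m,n)v\|_m = \|\cA(m,n)P_nv\|_m \le K e^{-\lambda(m-n)}\|v\|_n,
\]
which is not only bounded in $m\ge n$ but actually tends to $0$.

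For the reverse inclusion, the plan is to show that if $\sup_{m\ge n}\|\cA(m,n)v\|_m <+\infty$ then $Q_n v=0$, where $Q_n:=\Id-P_n$. Write $\cA(m,n)Q_nv=\cA(m,n)v-\cA(m,n)P_nv$; the first term is bounded by hypothesis and the second tends to $0$ by the inclusion already proved, so $\sup_{m\ge n}\|\cA(m,n)Q_nv\|_m<+\infty$. The key step is then to use~\eqref{pro}, which iterates to $Q_m\cA(m,n)=\cA(m,n)Q_n$, so that $w:=\cA(m,n)Q_nv$ satisfies $Q_m w=w$. Applying the second estimate in~\eqref{ed1sn} yields
\[
\|Q_nv\|_n = \|\cA(n,m)Q_mw\|_n \le K e^{-\lambda(m-n)}\|w\|_m = K e^{-\lambda(m-n)}\|\cA(m,n)Q_nv\|_m.
\]
Letting $m\to \infty$, the right-hand side tends to $0$ because $e^{-\lambda(m-n)}\to 0$ while $\|\cA(m,n)Q_nv\|_m$ stays bounded, giving $Q_nv=0$ and hence $v=P_nv\in \Ima P_n$.

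The only nontrivial point is the invariance $Q_m\cA(m,n)=\cA(m,n)Q_n$, which is a direct consequence of iterating~\eqref{pro} and noting that $P_n$ being a projection implies so is $Q_n$; everything else is a bounded-times-exponentially-decaying estimate. No further subtlety is expected.
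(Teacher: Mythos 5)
Your proof is correct and follows essentially the same route as the paper's: the first estimate in~\eqref{ed1sn} gives one inclusion, and for the converse one shows $Q_nv=0$ by bounding $\|Q_nv\|_n$ via the second estimate applied to $\cA(m,n)Q_nv$ and the invariance $Q_m\cA(m,n)=\cA(m,n)Q_n$. You spell out the intermediate step $Q_m w = w$ a bit more explicitly than the paper, but the argument is the same.
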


\begin{proof}
It follows from the first inequality in~\eqref{ed1sn} that \begin{equation}\label{sup}\sup_{m\ge n} \|\cA(m,n)v\|_m <+\infty, \end{equation} for each $v\in \Ima P_n$.
Take now $v\in \R^d$ satisfying~\eqref{sup}. Since $v=P_nv+Q_nv$, it follows from~\eqref{ed1sn} that
\begin{equation}\label{sup2}
\sup_{m\ge n} \|\cA(m,n)Q_n v\|_m <+\infty.
\end{equation}
On the other hand, the second estimate in~\eqref{ed1sn} implies that for $m\ge n$,
\[
\|Q_nv\|_n =\|\cA(n,m)\cA(m,n)Q_nv\|_n \le {K}e^{-\lambda (m-n)}\|\cA(m, n)Q_n v\|_m,
\]
and thus
\[
\frac 1 K e^{\lambda (m-n)}\|Q_n v\|_n \le \|\cA(m, n)Q_n v\|_m.
\]
From~\eqref{sup2} we conclude that $Q_n v=0$, and therefore $v\in \Ima P_n$.
\end{proof}

\begin{lemma}\label{L}
Let $(A_n)_{n\in \Z^+}$ be a sequence of invertible operators on $\R^d$ that admits a strong exponential dichotomy with respect to a sequence of norms $\| \cdot \|_n$, $n\in \Z^+$ and projections $P_n$, $n\in \Z^+$. Futhermore, let $(P_n')_{n\in \Z^+}$ be a sequence of projections on $\R^d$ such that
\begin{equation}\label{prime}
P_{n+1}'A_n=A_nP_n', \quad n\in \Z^+.
\end{equation}
Then, $(A_n)_{n\in \Z^+}$ admits a strong exponential dichotomy with respect to the sequence of norms $\| \cdot \|_n$, $n\in \Z^+$ and projections $P_n'$, $n\in \Z^+$ if and only if
$\Ima P_0=\Ima P_0'$.

\end{lemma}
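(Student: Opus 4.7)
The $(\Rightarrow)$ direction will be immediate from Lemma~\ref{l1}: if both $(P_n)$ and $(P_n')$ furnish strong exponential dichotomies for $(A_n)_{n\in\Z^+}$ with respect to the same sequence of norms, then Lemma~\ref{l1} characterises both $\Ima P_0$ and $\Ima P_0'$ as the intrinsic set $\{v\in\R^d:\sup_{m\ge 0}\|\cA(m,0)v\|_m<\infty\}$, forcing them to agree.

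For the converse, the plan is to treat $P_n'$ as a graph-type perturbation of $P_n$. First I would observe that the invariance relations combined with invertibility of each $A_n$ give $\Ima P_n=\cA(n,0)\Ima P_0=\cA(n,0)\Ima P_0'=\Ima P_n'$ for every $n\ge 0$, so $\Ker P_n$ and $\Ker P_n'$ are two complements to a common subspace. I will then write
\[
P_n'=P_n+\phi_nQ_n,\qquad Q_n'=Q_n-\phi_nQ_n,
\]
where $\phi_n\colon\R^d\to\Ima P_n$ vanishes on $\Ima P_n$ and is determined by $\phi_n|_{\Ker P_n}=P_n'|_{\Ker P_n}$. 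Substituting these expressions into $A_nP_n'=P_{n+1}'A_n$ and using $A_nP_n=P_{n+1}A_n$ together with $A_nQ_n=Q_{n+1}A_n$ will yield the intertwining $A_n\phi_n=\phi_{n+1}A_n$, which iterates to the closed-form expression $\phi_n=\cA(n,0)\,\phi_0\,\cA(0,n)$.

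The crux of the proof will be a uniform bound on $\|\phi_n\|_n$. For $v\in\Ker P_n$, the second dichotomy estimate for $(P_n)$ gives $\|\cA(0,n)v\|_0\le Ke^{-\lambda n}\|v\|_n$; since $\phi_0\cA(0,n)v\in\Ima P_0$, the first dichotomy estimate then yields $\|\cA(n,0)\phi_0\cA(0,n)v\|_n\le K\|\phi_0\|_0\, e^{-\lambda n}\|\cA(0,n)v\|_0$, and chaining these two bounds produces $\|\phi_n\|_n\le K'e^{-2\lambda n}$ for an appropriate constant $K'$. With this uniform (and in fact decaying) bound in hand, the dichotomy estimates for $(P_n')$ follow by substitution: $\cA(m,n)P_n'v=\cA(m,n)(P_nv+\phi_nQ_nv)$ has its argument in $\Ima P_n$ and therefore falls under the first original estimate, while for $\cA(n,m)Q_m'v$ I will use the intertwining identity (in the form $\cA(n,m)\phi_m=\phi_n\cA(n,m)$) to rewrite it as $(\Id-\phi_n)\cA(n,m)Q_mv$, which is controlled by the second original estimate combined with the bound on $\phi_n$; the growth estimates~\eqref{bg1sne} do not involve the projections and transfer for free. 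The main obstacle here is precisely the uniform boundedness of $\phi_n$, which is resolved by the favourable interplay of the forward and backward exponential rates of the original splitting.
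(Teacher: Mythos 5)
Your proposal is correct and takes essentially the same route as the paper: both exploit the fact that, when $\Ima P_0=\Ima P_0'$, the difference $P_0'-P_0$ vanishes on $\Ima P_0$ and maps $\Ker P_0$ into $\Ima P_0$, and then propagate this perturbation with the cocycle to obtain an $e^{-2\lambda n}$ decay via the chained forward/backward dichotomy estimates. Your reformulation via the explicit angular map $\phi_n=P_n'-P_n=\cA(n,0)\phi_0\cA(0,n)$ and the intertwining $\cA(n,m)\phi_m=\phi_n\cA(n,m)$ is a slightly cleaner bookkeeping of the paper's identity $P_0-P_0'=P_0(P_0-P_0')=(P_0-P_0')Q_0$ and the resulting bound $\|\cA(n,0)(P_0-P_0')\cA(0,m)v\|_n\le K^2Le^{-\lambda n-\lambda m}\|v\|_m$, but the mathematical content is the same.
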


\begin{proof}
If $(A_n)_{n\in \Z^+}$ admits a strong exponential dichotomy with respect to the sequence of norms $\| \cdot \|_n$, $n\in \Z^+$ and projections $P_n'$, $n\in \Z^+$, it follows from Lemma~\ref{l1} that
\[
\Ima P_0=\Ima P_0'=\bigg \{v\in \R^d: \sup_{m\ge 0} \|\cA(m,0)v\|_m <+\infty \bigg \}.
\]

Suppose now that $\Ima P_0=\Ima P_0'$. Then,
\[
P_0P_0'=P_0' \quad \text{and} \quad P_0'P_0=P_0,
\]
and consequently
\[
P_0-P_0'=P_0(P_0-P_0')=(P_0-P_0')Q_0,
\]
where $Q_0=\Id-P_0$.
By~\eqref{ed1sn}, we have that
\[
\begin{split}
\|\cA(n,0)(P_0-P_0')v\|_n &=\|\cA(n,0)P_0(P_0-P_0')v\|_n  \\
&\le {K}e^{-\lambda n}\|(P_0-P_0')Q_0v\|_0 \\
&\le KLe^{-\lambda n}\|Q_0v\|_0\\
&= KLe^{-\lambda n}\|\cA(0,m)\cA(m,0)Q_0v\|_0\\
&= KLe^{-\lambda n}\|\cA(0,m)Q_m\cA(m,0)v\|_0\\
&\le K^2Le^{-\lambda n-\lambda m}\|\cA(m,0)v\|_m,
\end{split}
\]
for $m, n\in \Z^+$ and $v\in \R^d$, where
\[
L:=\max_{\|v\|_0 \le 1} \|(P_0-P_0')v\|_0.
\]
Therefore,
\[
\begin{split}
\|\cA(n,m)P_m' v\|_n &\le \|\cA(n,m)P_mv\|_n+\|\cA(n,m)(P_m-P_m')v\|_n \\
&=\|\cA(n,m)P_m v\|_n+\|\cA(n,0)(P_0-P_0')\cA(0,m)v\|_n\\
&\le {K}e^{-\lambda (n-m)}\|v\|_m+K^2Le^{-\lambda n-\lambda m}\|v\|_m\\
&\le K'e^{-\lambda (n-m)}\|v\|_m,
\end{split}
\]
for $n\ge m$ and $v\in \R^d$, where $K'=K+K^2L>0$. Similarly, setting $Q_m'=\Id-P_m'$ we obtain that
\[
\begin{split}
\| \cA(n,m)Q_m'v\|_n &\le \|\cA(n,m)Q_mv\|_n+\|\cA(n,m)(P_m-P_m')v\|_n \\
&\le {K}e^{-\lambda (m-n)}\|v\|_m+\|\cA(n,0)(P_0-P_0')\cA(0,m)v\|_n \\
&\le {K}e^{-\lambda (m-n)}\|v\|_m+K^2Le^{-\lambda n-\lambda m}\|v\|_m \\
&\le K'e^{-\lambda (m-n)}\|v\|_m,
\end{split}
\]
for $m\ge n$ and $v\in \R^d$. We conclude that $(A_n)_{n\in \Z^+}$ admits a strong exponential dichotomy with respect to the sequence of norms $\| \cdot \|_n$, $n\in \Z^+$ and projections $P_n'$, $n\in \Z^+$.
\end{proof}

\begin{corollary}
Suppose that $(A_n)_{n\in \Z^+}$ is a sequence of invertible operators on $\R^d$ that admits a strong exponential dichotomy with respect to a sequence of norms $\|\cdot \|_n$, $n\in \Z^+$ and projections $P_n$, $n\in \Z^+$. Furthemore, let $Y\subset \R^d$ be an arbitrary  subspace such that
\begin{equation}\label{dec}
\R^d=\Ima P_0 \oplus Y.
\end{equation}
Let $P_0'\colon \R^d \to \Ima P_0$ be the projection associated to~\eqref{dec}. Furthermore, for $n\in \Z^+$ set
\[
P_n'=\cA(n,0)P_0'\cA(0,n).
\]
Then, $(A_n)_{n\in \Z^+}$ admits a strong exponential dichotomy with respect to the sequence of norms $\|\cdot \|_n$, $n\in \Z^+$ and projections $P_n'$, $n\in \Z^+$.
\end{corollary}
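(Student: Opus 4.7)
The plan is to recognize this corollary as a direct application of Lemma~\ref{L}. More precisely, in order to conclude that $(A_n)_{n\in \Z^+}$ admits a strong exponential dichotomy with respect to $\|\cdot\|_n$ and the projections $P_n'$, it suffices to check three things: that each $P_n'$ is genuinely a projection, that the invariance relation~\eqref{prime} holds, and that $\Ima P_0' = \Ima P_0$. The third item is immediate from the definition of $P_0'$ as the projection associated with the splitting $\R^d=\Ima P_0\oplus Y$, whose range is by construction $\Ima P_0$.

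For the first item, $P_0'$ is a projection by construction, and for $n\ge 1$ one computes
\[
(P_n')^2=\cA(n,0)P_0'\cA(0,n)\cA(n,0)P_0'\cA(0,n)=\cA(n,0)(P_0')^2\cA(0,n)=P_n',
\]
using that $\cA(0,n)\cA(n,0)=\Id$, so $P_n'$ is a projection as well. For the second item, I would use the cocycle identities $\cA(n+1,0)=A_n\cA(n,0)$ and $\cA(0,n+1)A_n=\cA(0,n)$ (the latter following from $\cA(0,n+1)=\cA(0,n)A_n^{-1}$) to write
\[
P_{n+1}'A_n=\cA(n+1,0)P_0'\cA(0,n+1)A_n=A_n\cA(n,0)P_0'\cA(0,n)=A_nP_n',
\]
which is precisely~\eqref{prime}.

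Having verified these three conditions, Lemma~\ref{L} applied to the pair of projection sequences $(P_n)_{n\in \Z^+}$ and $(P_n')_{n\in \Z^+}$ (which share the same image at $n=0$) immediately yields the existence of constants $K',\lambda>0$ for which the bounds~\eqref{ed1sn} hold with respect to $P_n'$; the growth bound~\eqref{bg1sne} is unchanged since it does not depend on the projections. There is no real obstacle here: the only step that requires care is the bookkeeping in the cocycle computation for~\eqref{prime}, and the whole point of introducing the formula $P_n'=\cA(n,0)P_0'\cA(0,n)$ is to make this invariance automatic.
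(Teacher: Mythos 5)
Your proof is correct and takes essentially the same approach as the paper: verify that the $P_n'$ are projections satisfying the invariance relation~\eqref{prime} and that $\Ima P_0'=\Ima P_0$, then invoke Lemma~\ref{L}. You simply spell out the routine cocycle computations that the paper leaves implicit.
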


\begin{proof}
Clearly, the projections $P_n'$  satisfy~\eqref{prime}. Moreover, $\Ima P_0'=\Ima P_0$. The conclusion now follows directly from Lemma~\ref{L}.
\end{proof}

Let us now fix an arbitrary sequence $\mathcal S=\{\| \cdot \|_n; \ n\in \Z^+\}$ of norms on $\R^d$ and a sequence $\mathbb  A=(A_n)_{n\in \Z^+}$ of invertible operators on $\R^d$. Suppose that~\eqref{bg1sne} holds with some $K, a>0$.
For $r>0$ and $n\in \Z^+$, let
\[
S_r(n):=\bigg \{v\in \R^d: \sup_{m\ge n}(r^{-(m-n)}\|\cA(m,n)v\|_m)<+\infty \bigg \}.
\]
Clearly,
\[
A_n S_r(n)=S_r(n+1) \quad \text{for $n\in \Z^+$ and $r\in (0, \infty)$,}
\]
which implies that $\dim S_r(n)$ does not depend on $n$, and thus we  denote it by $\dim S_r$.  Moreover, observe that for $r_1<r_2$ we have that $S_{r_1}(n)\subset S_{r_2}(n)$ for $n\in \Z^+$.
\begin{lemma}
The set $\Sigma:=\Sigma_{ED, \mathbb A, \mathcal S}$ is closed. Moreover, for $r\in (0, \infty)\setminus \Sigma$ we have
\begin{equation}\label{snr}
S_r(n)=S_{r'}(n),
\end{equation}
for all $n\in \Z^+$ and $r'$ in some open interval around $r$.
\end{lemma}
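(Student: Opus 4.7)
The plan is to prove closedness by a standard roughness (perturbation) argument, from which the second claim follows immediately via Lemma~\ref{l1}. The key algebraic identity is that, letting $\mathcal B_r(m,n)$ denote the cocycle of $(\frac{1}{r}A_n)_{n\in \Z^+}$, one has
\[
\mathcal B_r(m,n)=(r_0/r)^{m-n}\mathcal B_{r_0}(m,n)\quad\text{for }m\ge n,
\]
and correspondingly $\mathcal B_r(n,m)=(r/r_0)^{m-n}\mathcal B_{r_0}(n,m)$, so changing the scaling factor only introduces an explicit exponential prefactor in each entry of the dichotomy bounds.

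To show that $(0,\infty)\setminus \Sigma$ is open, I would fix $r_0 \notin \Sigma$ and let $P_n^{r_0}$ together with constants $K_0, a_0 \ge \lambda_0 > 0$ be the data of the strong exponential dichotomy of $(\frac{1}{r_0}A_n)$ with respect to $\mathcal S$. Substituting the identity above into the bounds \eqref{ed1sn} for $\mathcal B_{r_0}$, while keeping the same projections, gives for any $r>0$
\[
\|\mathcal B_r(m,n)P_n^{r_0}x\|_m\le K_0\,e^{(\log(r_0/r)-\lambda_0)(m-n)}\|x\|_n,
\]
\[
\|\mathcal B_r(n,m)Q_m^{r_0}x\|_n\le K_0\,e^{(\log(r/r_0)-\lambda_0)(m-n)}\|x\|_m.
\]
Both exponents are negative precisely for $r\in (r_0 e^{-\lambda_0},\, r_0 e^{\lambda_0})$, and the growth bound \eqref{bg1sne} transfers in the same way to yield $\|\mathcal B_r(m,n)x\|_m\le K e^{(a+|\log r|)(m-n)}\|x\|_n$ (and the analogous estimate for the inverse). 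Hence, throughout this interval, $(\frac{1}{r}A_n)$ admits a strong exponential dichotomy with respect to $\mathcal S$ with projections $P_n^{r_0}$, so the interval is disjoint from $\Sigma$ and $\Sigma$ is closed in $(0,\infty)$.

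For the equality $S_r(n)=S_{r'}(n)$, fix $r\in (0,\infty)\setminus \Sigma$ and let $P_n^r$ be the projections of the dichotomy of $(\frac{1}{r}A_n)$. Lemma~\ref{l1}, applied with the cocycle $\mathcal B_r$, identifies $\Ima P_n^r$ with the set $\{v\in\R^d:\sup_{m\ge n}\|\mathcal B_r(m,n)v\|_m<\infty\}$, which is precisely $S_r(n)$. The roughness step above, applied at this $r$, produces an open interval $I$ around $r$ such that the very same projections $P_n^r$ realize a strong exponential dichotomy for $(\frac{1}{r'}A_n)$ for every $r'\in I$. A second application of Lemma~\ref{l1}, now to $(\frac{1}{r'}A_n)$ with projections $P_n^r$, yields $\Ima P_n^r = S_{r'}(n)$, and therefore $S_r(n)=S_{r'}(n)$ throughout $I$.

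The main obstacle is essentially bookkeeping: I must verify that the perturbed dichotomy still satisfies the strong condition $a_r \ge \lambda_r > 0$. The contraction rate degrades as $\lambda_r = \lambda_0 - |\log(r/r_0)|$ while the growth rate inflates to $a_r = a_0 + |\log(r/r_0)|$, so strongness is automatic on $(r_0 e^{-\lambda_0},\, r_0 e^{\lambda_0})$; once this is checked, the rest is routine.
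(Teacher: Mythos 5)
Your proof is correct and follows essentially the same route as the paper: fix $r$ outside $\Sigma$, keep the dichotomy projections, and observe that replacing $r$ by a nearby $r'$ only shifts the exponential rates in \eqref{ed1sn} by $|\log(r'/r)|<\lambda$, so the dichotomy persists on an open interval; then invoke Lemma~\ref{l1} to identify $S_r(n)=S_{r'}(n)=\Ima P_n$. The one point you handle slightly more carefully than the paper is the explicit check that the growth bound \eqref{bg1sne} (and hence the \emph{strong} dichotomy property) also transfers to the rescaled cocycle, which the paper leaves implicit.
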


\begin{proof}
Take an arbitrary $r\in (0, \infty)\setminus \Sigma$. Let $\mu\in \R$ be such that $r=e^\mu$. Hence, there exist $K', \lambda >0$ and a sequence of projections $P_n$, $n\in \Z^+$ satisfying~\eqref{pro} and such that
\[
e^{-\mu(m-n)}\| \cA(m,n)P_nv\|_m \le K'e^{-\lambda (m-n)}\|v\|_n,
\]
and
\[
e^{\mu(m-n)}\| \cA(n,m)Q_n v\|_n \le K'e^{-\lambda (m-n)} \|v\|_m,
\]
for $m\ge n$ and $v\in \R^d$. Hence, for $\mu'\in \R$ we have that
\[
e^{-\mu'(m-n)}\| \cA(m,n)P_nv\|_m \le {K'}e^{-(\lambda-\mu+\mu') (m-n)}\|v\|_n,
\]
and
\[
e^{\mu'(m-n)}\| \cA(n,m)Q_n v\|_n \le {K'}e^{-(\lambda+\mu-\mu') (m-n)} \|v\|_m,
\]
for $m\ge n$ and $v\in \R^d$. We conclude that $(\frac{1}{r'}A_n)_{n\in \Z^+}$ admits an exponential dichotomy with respect to the sequence of norms $\|\cdot \|_n$, $n\in \Z^+$ and projections $P_n$, $n\in \Z^+$ whenever $r'=e^{\mu'}$ and $|\mu-\mu'|<\lambda$. Moreover, Lemma~\ref{l1} implies that~\eqref{snr} holds.
\end{proof}

\begin{lemma}\label{259}
The following properties hold:
\begin{itemize}
\item for $r>0$ sufficiently large we have that $r\notin \Sigma$ and $S_r(n)=\R^d$ for $n\in \Z^+$;
\item for $r>0$ sufficiently small we have that $r\notin \Sigma$ and $S_r(n)=\{0\}$ for $n\in \Z^+$.
\end{itemize}
\end{lemma}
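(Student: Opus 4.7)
The plan is to exploit the growth bound \eqref{bg1sne} directly. Setting $r = e^\mu$, I claim that any $\mu > a$ gives an $r$ satisfying the first bullet, while any $\mu < -a$ gives an $r$ satisfying the second bullet.

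For the first bullet, I fix $r = e^\mu$ with $\mu > a$ and show that $(r^{-1}A_n)_{n\in\Z^+}$ admits a strong exponential dichotomy with respect to the given sequence of norms and the trivial projections $P_n \equiv \Id$ (so that $Q_n \equiv 0$). The first estimate of \eqref{ed1sn} follows directly from the first half of \eqref{bg1sne}:
\[
r^{-(m-n)}\|\cA(m,n)x\|_m \le K e^{-(\mu - a)(m-n)}\|x\|_n,
\]
with dichotomy rate $\lambda := \mu - a > 0$. The second estimate of \eqref{ed1sn} is vacuous since $Q_m \equiv 0$. The growth bounds \eqref{bg1sne} for the rescaled cocycle both hold with constant $a' := \mu + a$, which satisfies $a' \ge \lambda$. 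Hence $r \notin \Sigma$. Moreover, the same upper bound yields $r^{-(m-n)}\|\cA(m,n)v\|_m \to 0$ for every $v \in \R^d$, so $S_r(n) = \R^d$ for each $n$.

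For the second bullet, I fix $r = e^\mu$ with $\mu < -a$ and use the dual choice $P_n \equiv 0$ (so $Q_n \equiv \Id$). The first estimate of \eqref{ed1sn} is now vacuous, and the second follows from the second half of \eqref{bg1sne}:
\[
r^{m-n}\|\cA(n,m)x\|_n \le K e^{(\mu + a)(m-n)}\|x\|_m = Ke^{-(-\mu-a)(m-n)}\|x\|_m,
\]
with $\lambda := -\mu - a > 0$. The growth bounds for $(r^{-1}A_n)$ follow as before, and $\mu + a \le a - \mu$ since $\mu < 0$ ensures one may choose $a' \ge \lambda$, so $r \notin \Sigma$. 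To conclude $S_r(n) = \{0\}$, I apply the second inequality of \eqref{bg1sne} with $y = \cA(m,n)v$, giving $\|v\|_n \le K e^{a(m-n)}\|\cA(m,n)v\|_m$, and hence
\[
r^{-(m-n)}\|\cA(m,n)v\|_m \ge \frac{1}{K}\bigl(re^{a}\bigr)^{-(m-n)}\|v\|_n.
\]
Since $re^{a} = e^{\mu + a} < 1$, the right-hand side blows up as $m \to \infty$ unless $v = 0$, forcing $S_r(n) = \{0\}$.

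Both parts are thus straightforward consequences of the two-sided growth bound \eqref{bg1sne}, and there is no substantial obstacle. The only item requiring attention is the bookkeeping needed to verify that the degenerate choices $P_n \equiv \Id$ or $P_n \equiv 0$ genuinely satisfy the numerical requirement $a' \ge \lambda > 0$ of Definition~\ref{1753}, which amounts to picking the rescaled growth constant to dominate the dichotomy rate; this is routine.
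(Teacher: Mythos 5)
Your proof is correct and follows essentially the same route as the paper: observe that for $r$ large (resp. small) the rescaled cocycle $(r^{-1}A_n)$ satisfies the dichotomy estimates with the degenerate projections $P_n\equiv\Id$ (resp. $P_n\equiv 0$), then identify $S_r(n)$. The only cosmetic differences are that the paper invokes Lemma~\ref{l1} to identify $S_r(n)$ with $\Ima P_n$, while you re-derive the needed lower bound $r^{-(m-n)}\|\cA(m,n)v\|_m \ge K^{-1}(re^a)^{-(m-n)}\|v\|_n$ inline (which is exactly the computation inside Lemma~\ref{l1}), and you write out the second bullet in full where the paper says ``similarly.''
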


\begin{proof}
For $r>e^a$, it follows from~\eqref{bg1sne} that
\[
\|r^{-(m-n)}\cA(m,n)x\|_m \le {K}e^{-(\log r-a)(m-n)}\|x\|_n,
\]
for $m\ge n$ and $x\in \R^d$. We conclude that the sequence $(\frac 1 r A_n)_{n\in \Z^+}$ admits a strong exponential dichotomy with respect to the sequence of norms $\| \cdot \|_n$, $n\in \Z^+$ and projections $P_n=\Id$, $n\in \Z^+$. This together with Lemma~\ref{l1} implies the first conclusion. Similarly, one can establish the second conclusion.
\end{proof}

\begin{lemma}\label{258}
For $r_1, r_2\in (0, \infty)\setminus \Sigma$, $r_1<r_2$ the following properties are equivalent:
\begin{itemize}
\item $S_{r_1}(n)=S_{r_2}(n)$ for some $n\in \Z^+$ (and so for all $n\in \Z^+$);
\item $[r_1, r_2]\cap \Sigma=\emptyset$.
\end{itemize}
\end{lemma}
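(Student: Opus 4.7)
\textbf{Proof proposal for Lemma \ref{258}.}

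The first step is to dispose of the parenthetical \emph{some $n$ iff all $n$} claim. The identity $A_n S_r(n) = S_r(n+1)$, combined with invertibility of each $A_n$, shows that $S_{r_1}(n) = S_{r_2}(n)$ at one index $n$ forces equality at every index. So we can work at $n = 0$ throughout.

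For the implication $[r_1,r_2] \cap \Sigma = \emptyset \Rightarrow S_{r_1}(0) = S_{r_2}(0)$, I would invoke the preceding lemma, which shows that $r \mapsto S_r(0)$ is locally constant on the open set $(0,\infty) \setminus \Sigma$. Under the assumption, the connected compact interval $[r_1,r_2]$ sits inside $(0,\infty) \setminus \Sigma$ and hence in a single connected component of it, on which the locally constant map must be constant. This yields $S_{r_1}(0) = S_{r_2}(0)$.

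The substantive direction is $S_{r_1}(0) = S_{r_2}(0) \Rightarrow [r_1,r_2] \cap \Sigma = \emptyset$. Since $r_i \notin \Sigma$, the sequence $(r_i^{-1}A_n)_{n \in \Z^+}$ admits a strong exponential dichotomy with respect to $\mathcal{S}$ and some projections $P_n^{(i)}$, where by Lemma \ref{l1} we have $\operatorname{Im} P_0^{(i)} = S_{r_i}(0)$ for $i=1,2$. The hypothesis gives $\operatorname{Im} P_0^{(1)} = \operatorname{Im} P_0^{(2)}$. Observe that the projections $P_n^{(1)}$ trivially satisfy the invariance relation $P_{n+1}^{(1)} A_n = A_n P_n^{(1)}$, so Lemma \ref{L} applied to $(r_2^{-1}A_n)_{n \in \Z^+}$ upgrades its dichotomy to use the projections $P_n^{(1)}$. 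Writing $P_n := P_n^{(1)}$ and $Q_n := \Id - P_n$, we now have constants $K_i, \lambda_i > 0$ with
\begin{equation*}
\|r_1^{-(m-n)}\cA(m,n)P_n v\|_m \le K_1 e^{-\lambda_1(m-n)}\|v\|_n,\quad \|r_2^{m-n}\cA(n,m)Q_m v\|_n \le K_2 e^{-\lambda_2(m-n)}\|v\|_m
\end{equation*}
for $m \ge n$ and $v \in \R^d$.

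Fix an arbitrary $r \in [r_1,r_2]$. Since $r_1/r \le 1$ and $r/r_2 \le 1$, the identities
\begin{equation*}
r^{-(m-n)} = (r_1/r)^{m-n}\, r_1^{-(m-n)} \quad \text{and} \quad r^{m-n} = (r/r_2)^{m-n}\, r_2^{m-n}
\end{equation*}
immediately give
\begin{equation*}
\|r^{-(m-n)}\cA(m,n)P_n v\|_m \le K_1 e^{-\lambda_1(m-n)}\|v\|_n,\quad \|r^{m-n}\cA(n,m)Q_m v\|_n \le K_2 e^{-\lambda_2(m-n)}\|v\|_m.
\end{equation*}
The growth bound \eqref{bg1sne} for $(r^{-1}A_n)_{n \in \Z^+}$ follows at once from the corresponding bound for $\mathbb{A}$ after absorbing the factor $r^{\pm(m-n)}$ into a sufficiently large exponent. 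Hence $(r^{-1}A_n)_{n \in \Z^+}$ admits a strong exponential dichotomy with respect to $\mathcal{S}$, i.e.\ $r \notin \Sigma$. As $r \in [r_1,r_2]$ was arbitrary, $[r_1,r_2] \cap \Sigma = \emptyset$.

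The only mildly delicate step is the application of Lemma \ref{L} to unify the two a priori distinct projection systems into a single one that works for both rates $r_1$ and $r_2$; once this is done, the interpolation estimate on $\operatorname{Im} P_n$ and $\ker P_n$ is a one-line computation. I do not anticipate any further obstacle.
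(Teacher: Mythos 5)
Your proof is correct. The substantive direction $S_{r_1}(0)=S_{r_2}(0)\Rightarrow [r_1,r_2]\cap\Sigma=\emptyset$ follows the paper exactly: Lemma~\ref{l1} identifies the stable ranges, Lemma~\ref{L} lets you transfer the dichotomy of $(r_2^{-1}A_n)_n$ to the projections coming from $r_1$, and then the two one-sided estimates interpolate for every $r\in[r_1,r_2]$; you also correctly check that the invariance relation for the dichotomy of $(r_1^{-1}A_n)_n$ is literally the same as for $(r_2^{-1}A_n)_n$, and that the growth bound~\eqref{bg1sne} for $(r^{-1}A_n)_n$ persists with an enlarged exponent. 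For the converse the paper simply defers to a reference (``arguing exactly as in the proof of~\cite[Lemma 1]{BDV}''), whereas you give a short self-contained argument: the preceding lemma makes $r\mapsto S_r(0)$ locally constant on the open set $(0,\infty)\setminus\Sigma$, and a locally constant map is constant on the connected compact set $[r_1,r_2]$ once it is entirely contained in that open set. This is a clean and legitimate way to close the gap the paper leaves to the reader, and it highlights that the only genuine input needed for that direction is the openness of the resolvent set together with local stability of $S_r$, both already established in the immediately preceding lemma.
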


\begin{proof}
Assume first that $S_{r_1}(n)=S_{r_2}(n)$ for $n\in \Z^+$. By Lemmas~\ref{l1} and~\ref{L}, we have that sequences $(\frac{1}{r_1}A_n)_{n\in \Z^+}$ and $(\frac{1}{r_2}A_n)_{n\in \Z^+}$ admit a strong exponential dichotomy with respect to the sequence of norms $\|\cdot \|_n$, $n\in \Z^+$ and the same sequence of projections $P_n$, $n\in \Z^+$.
Therefore, there exist $K', \lambda >0$ such that
\begin{equation}\label{tt1}
\|r_i^{-(m-n)}\cA(m,n)P_n x\|_m \le K'e^{-\lambda (m-n)}\|x\|_n
\end{equation}
and
\begin{equation}\label{tt2}
\|r_i^{m-n}\cA(n,m)Q_mx\|_n \le  K'e^{-\lambda (m-n)}\|x\|_m,
\end{equation}
for $i\in \{1,2\}$, $m\ge n$ and $x\in \R^d$. For $r\in [r_1, r_2]$, it follows from~\eqref{tt1} for $i=1$ that
\[
\|r^{-(m-n)}\cA(m,n)P_n x\|_m \le K'e^{-\lambda (m-n)}\|x\|_n,
\]
for $m\ge n$ and $x\in \R^d$. Similarly, by~\eqref{tt2} for $i=2$ we have that
\[
\|r^{m-n}\cA(n,m)Q_mx\|_n \le  K'e^{-\lambda (m-n)}\|x\|_m,
\]
for $m\ge n$ and $x\in \R^d$. We conclude that $(\frac 1 rA_n)_{n\in \Z^+}$ admits a strong exponential dichotomy with respect to the sequence of norms $\|\cdot \|_n$, $n\in \Z^+$. Hence, $r\notin \Sigma$.

The converse implication can be established by arguing exactly as in the proof of~\cite[Lemma 1]{BDV}.
\end{proof}

We are now in a position to prove Proposition~\ref{22}.
\begin{proof}[Proof of Proposition~\ref{22}]
Since $\Sigma$ is closed, it a disjoint union of closed intervals. Assume that it contains $d+1$ closed disjoint intervals. Then, there exist $c_1, \ldots, c_d\in (0, \infty)\setminus \Sigma$ such that intervals
\[
(0, c_1), \ (c_1, c_2), \ldots, (c_{d-1}, c_d), \ (c_d, \infty)
\]
intersect $\Sigma$. By Lemma~\ref{258}, we have that
\[
\dim S_{c_1}<\dim S_{c_2} <\ldots <\dim S_{c_d}.
\]
Moreover, Lemmas~\ref{259} and~\ref{258} imply that $\dim S_{c_1}>0$ and $\dim S_{c_d}<d$. This yields a contradiction.
\end{proof}

\section{Acknowledgments}
The authors are ranked in alphabetic order and their contributions should be treated equally. L. Backes was partially supported by a CNPq-Brazil PQ fellowship under Grant No. 307633/2021-7; D. Dragi\v cevi\'c was partially supported by Croatian Science Foundation under the project IP-2019-04-1239 and by the University of Rijeka under the projects uniri-prirod-18-9 and uniri-pr-prirod-19-16; W. Zhang was partially supported by NSFC grants \#11922105 and \#11831012.

\end{document}